 \DeclareMathOperator{\perm}{Sym}
\DeclareMathOperator{\aut}{Aut}
\DeclareMathOperator{\ran}{rank}
 \DeclareMathOperator{\frat}{Frat}
\DeclareMathOperator{\ssl}{SL} \DeclareMathOperator{\psl}{PSL}
\DeclareMathOperator{\asl}{ASL} 
\DeclareMathOperator{\sym}{Sym}
\DeclareMathOperator{\GL}{GL} 
\DeclareMathOperator{\ASL}{ASL}
\DeclareMathOperator{\alt}{Alt}
\DeclareMathOperator{\End}{End}
\DeclareMathOperator{\irb}{IrrB}
\DeclareMathOperator{\fit}{Fit}
\renewcommand{\emptyset}{\varnothing}
\newcommand{\gen}[1]{\ensuremath{\langle #1\rangle}}
\newtheorem{thm}{Theorem}[section]
\newtheorem{cor}[thm]{Corollary}
 \newtheorem{lemma}[thm]{Lemma}
\newtheorem{prop}[thm]{Proposition} 
 \newtheorem{defn}[thm]{Definition}
\newtheorem{question}[]{Question} 
\numberwithin{equation}{section}
\renewcommand{\footnote}{\endnote}
\newcommand{\ignore}[1]{}\makeglossary
\begin{document}
\bibliographystyle{amsplain}

	%	\subjclass{20P05}
	%	\keywords{groups generation; waiting time; Sylow subgroups; permutations groups}
	\title[Minimal invariable generating sets]{Minimal invariable generating sets}
	
	\author{Daniele Garzoni}
	\address{
		Daniele Garzoni\\ Universit\`a degli Studi di Padova\\  Dipartimento di Matematica \lq\lq Tullio Levi-Civita\rq\rq\\email: daniele.garzoni@phd.unipd.it}	

	\author{Andrea Lucchini}
	\address{
		Andrea Lucchini\\ Universit\`a degli Studi di Padova\\  Dipartimento di Matematica \lq\lq Tullio Levi-Civita\rq\rq \\email: lucchini@math.unipd.it}

	\begin{abstract}A subset $S$ of a group $G$ invariably generates $G$ if, when each element of $S$ is replaced by an arbitrary conjugate,
		the resulting set generates $G.$ An invariable generating set $X$ of $G$ is called minimal if no proper subset of $X$ invariably generates $G.$ We will address several questions related to the behaviour of minimal invariable generating sets of a finite group.
		\end{abstract}
	\maketitle
	
	\section{Introduction}

The research around invariable generation of groups followed initially two lines. The first was related to the probabilistic invariable generation of finite symmetric groups; the second was concerned with the study of fixed points in group actions. The first began with Dixon in \cite{dixon}, where the following definition was given. For a finite group $G$ and a subset $\{g_1, \ldots , g_d\}$ of $G$, we say that $\{g_1, \ldots , g_d\}$ invariably generates $G$ if
$\{g_1^{x_1}, \ldots , g_d^{x_d}\}$ generates $G$ for every choice of $x_i \in G$. The questions raised in \cite{dixon} received some attention: see \cite{lp}, \cite{PRR}, \cite{efg}, \cite{Br}.

The bridge between invariable generation and permutation groups follows easily from the definition. Indeed, saying that $X$  invariably generates $G$ amounts to saying that, for every transitive action of $G$ on a finite set with more than one element $\Omega$, there exists some element of $X$ acting without fixed points on $\Omega$. For results in this direction, see for instance \cite{W1}, \cite{fg1}, \cite{sha}.

While these two lines of research have studied invariable generation mainly for particular classes of groups, it was in \cite{ig} and \cite{ig-infinite} that a systematic approach to the subject was proposed. In \cite{ig} (and in \cite{gm} independently) it was shown, among other things, that finite simple groups are invariably generated by two elements. In \cite{ig-infinite} it was considered the problem of invariably generating infinite groups (with analogue definition). One interesting feature here is that there exist infinite groups that are not invariably generated by any of their subsets. These two papers inspired some research, such as \cite{dlis}, \cite{ACheboGen}, \cite{GG}, \cite{gem}.

In this paper we study various questions related to minimal invariable generating sets of finite groups. A (classical) generating set $X$ of $G$ is called minimal if no proper subset of $X$ generates $G$. We denote by $d(G)$ and $m(G)$, respectively, the smallest and the largest cardinality of a minimal generating set of $G.$ Analogously, an invariable generating set $X$ of $G$ is called minimal if no proper subset of $X$ invariably generates $G$. We use the notations $d_I(G)$ and $m_I(G)$ for the smallest and the largest cardinality of a minimal invariable generating set of $G$. We also write $\gen X_I = G$ to denote that $X$ invariably generates $G$.

It is interesting to compare, in various directions, generation and invariable generation. This is partly the purpose of the present paper. For this reason, before explaining in some detail the content of the paper we would like to spend some words about this comparison.

There is a well developed theory of generation of finite groups. On the one side of the story there are soluble groups. Gasch\"{u}tz \cite{Gas} gave a formula to compute the minimal number of generators of a finite soluble group in terms of certain \lq local\rq \ and \lq global\rq \ parameters associated to a chief series of the group. On the other side of the story there are nonabelian simple groups. It follows from the Classification of Finite Simple Groups that simple groups are generated by two elements. Various strengthenings of this statement have been studied over the last three decades. In a sense, it is possible to combine the two stories in order to obtain a theory of generation for all finite groups. This can be done using \lq crowns\rq  (see \cite{crowns}). The concept of crown was introduced by Gasch\"{u}tz in \cite{pref} for finite soluble groups. Later this notion has been generalised to all finite groups (see for example \cite{paz} and \cite{laf}).
 In his paper, Gasch\"{u}tz analyses the structure of the chief factors of a
soluble group $G$ as $G$-modules. Associated with an irreducible $G$-module $A,$ there exists
a section of the group, called the $A$-crown, which, viewed
as a $G$-module, is completely reducible and homogeneous with composition
length equal to the number of complemented chief factors $G$-isomorphic to $A$
in any chief series of $G.$

On the contrary, we believe that the theory of invariable generation of finite groups is still poor. There is some systematic way to work for soluble groups, again via the theory of crowns (see Section \ref{prelimi}), but outside the soluble world the available methods are unsatisfactory. The reason for this is that the concept of invariable generation is a subtle one.

For instance, an obvious feature of generation is the following: if $x$ and $y$ generate $G$, then $x$ and $xy$ also generate $G$, because we may obtain $y$ as $x^{-1}(xy)$. This apparently innocent property lurks behind more or less all results related to generation. Think for instance to the proof of the crucial result known as Gasch\"{u}tz Lemma \cite{Gas2}, or to the definition of the Product Replacement Algorithm \cite{pra}. The latter should remind us how this innocent property allows to create new generating tuples from old ones. The possibility to create new generating tuples is usually important in proofs involving counting arguments.

It is very easy to find examples, even in $\sym(3)$, showing that this innocent property fails for the invariable generation. This constitutes a serious obstacle for extending proofs from the classical to the invariable setting. Moreover, we are aware of no nontrivial way to produce new invariable generating tuples from old ones (except from conjugating the elements, but the tuples obtained in this way can hardly be considered as new ones).

%A slogan we may extract from the present paper is that in the soluble case one can sometimes remedy to the above problems by invoking the theory of crowns, which with some luck allows to reduce to questions of vector spaces and linear algebra --- the ideal %environment for generation.

All this said, and with this in mind, let us look closer at the content of the paper. Section \ref{prelimi} is devoted to introduce the background material, mainly about crowns in finite soluble groups.

Section \ref{sdue} deals with the problem of estimating $m_I(G)$. We will do this for finite soluble groups, showing that $m_I(G) = m(G)$. This inequality is no more true in the general case. We will prove that the difference $m(G) - m_I(G)$ can be arbitrarily large: this statement is somewhat opposite to the known fact (proved in \cite{ig} and \cite{dlis}) that $d_I(G) - d(G)$ can be arbitrarily large.
In particular, we will show that $m(G) - m_I(G)$ can be arbitrarily large for the family of symmetric groups, using however the Classification of Finite Simple Groups. On the other hand, in Section \ref{stre} we will construct an example for which the Classification Theorem is not needed, and where we can really bound the two numbers $m(G)$ and $m_I(G).$ 

No example is known of a finite group $G$ with $m_I(G)>m(G)$, so we leave open the question whether the inequality $m_I(G)\leq m(G)$ is true for every finite group.

It follows from a result in universal algebra, known as Tarski irredundant basis theorem, that for every positive integer $k$ with $d(G)\leq k\leq m(G)$, $G$ contains a minimal generating set of cardinality $k$. In Section \ref{squattro} we will try to apply Tarski's argument to the invariable generation. This however will produce only a weak result, and the natural generalisation of Tarski's theorem to the invariable setting remains an open problem. Nevertheless, we will prove that such generalisation holds in the soluble case.

A relevant role in the study of the generating properties of a finite group is played by the Frattini subgroup, since it coincides with the set of  \lq non-generating\rq \ elements. In Section \ref {scinque} we will introduce the analogue of the Frattini subgroup from the point of view of the invariable generation. This will allow us to properly state the results of Section \ref{ssette}.
\iffalse
In \cite{ak} the class of $\mathcal B$-groups was defined as the class of finite groups for which $d(G)=m(G)$. In Section \ref{ssette} we will define the $\mathcal B_I$-groups as the finite groups for which $d_I(G)=m_I(G)$. It is known by \cite{ak} that $\mathcal B$-groups are soluble. It turns out that they are also $\mathcal B_I$-groups. On the other hand, a $\mathcal B_I$-group needs be neither soluble nor a $\mathcal B$-group. We will classify the soluble $\mathcal B_I$-groups. Many $\mathcal B_I$-groups which are not $\mathcal B$-groups will be conne
\fi
\iffalse
In Sections \ref{ssette} and \ref{slast}, extending definitions from \cite{ak} we will study the $\mathcal B_I$-groups, i.e. the finite groups for which $d_I(G)=m_I(G)$, and the groups having the invariable basis property, i.e. the finite groups all of whose subgroups are $\mathcal B_I$-groups. We will discuss all the groups having the invariable basis property (in particular there are only four nonsoluble examples). On the other hand, we will only describe the structure of the soluble $\mathcal B_I$-groups; the unsoluble $\mathcal B_I$-groups remain largely unexplored. Soluble $\mathcal B_I$-groups will have connections with \lq secretive\rq \ $p$-groups, introduced in \cite{hid} with different purposes.
\fi

In Sections \ref{ssette} and \ref{slast}, extending definitions from \cite{ak} we will study the $\mathcal B_I$-groups, i.e. the finite groups for which $d_I(G)=m_I(G)$, and the groups having the invariable basis property, i.e. the finite groups all of whose subgroups are $\mathcal B_I$-groups. While we will discuss in great details all the groups having the invariable basis property (in particular there are only four nonsoluble examples), we will only describe the structure of the soluble $\mathcal B_I$-groups. These have connections with \lq secretive\rq \ $p$-groups, introduced in \cite{hid} with different purposes. On the other hand, the unsoluble $\mathcal B_I$-groups remain largely unexplored.

One last remark. The reader will have noted that most results of the present paper concern finite soluble groups. As we hinted above, our understanding of the invariable generation of this class of groups is better than in general. This depends on the fact that, with some luck, crowns allow to reduce to questions of vector spaces and linear algebra: the ideal environment for generation. However, the reader should be alerted that even for this class of groups the situation is not easy, and many apparently approachable questions still do not find an answer.

\section{Preliminaries}\label{prelimi}
 
We begin with an easy and well known lemma.

\begin{lemma}
\label{basico}
Let $G$ be a finite group, $X$ be a subset of $G$ and $N$ be an abelian normal subgroup of $G$. Let $\pi: G \rightarrow G/N$ denote the natural projection.
\begin{itemize}
\item[(i)] $X$ invariably generates $G$ if and only if $X \not\subseteq \cup_{g \in G} M^g$ for every maximal subgroup $M$ of $G$.
\item[(ii)] If $G$ is nilpotent, then $X$ invariably generates $G$ if and only it generates $G$.
\item[(iii)] If $\pi(X)$ invariably generates $G/N$, and $Y \subseteq N$ generates $N$ as a $G$-module, then $X \cup Y$ invariably generates $G$.
\end{itemize}
\end{lemma}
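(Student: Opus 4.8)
The plan is to establish the three parts in order, using (i) as the engine for (ii) and treating (iii) by a direct module-theoretic argument. For (i) I would simply unwind the definition. The set $X$ fails to invariably generate $G$ exactly when some choice of conjugates $\{x^{g_x} : x \in X\}$ generates a proper subgroup, hence lies inside some maximal subgroup $M$. But $x^{g_x} \in M$ for a suitable $g_x$ says precisely that $x \in M^{g_x^{-1}}$, i.e. that $x$ belongs to some conjugate of $M$. Ranging over all $x \in X$, non-invariable-generation is therefore equivalent to the existence of a maximal subgroup $M$ with $X \subseteq \bigcup_{g \in G} M^g$; negating this equivalence yields the statement, the only external fact being that every proper subgroup lies in a maximal one.

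Part (ii) is then immediate. Invariable generation always implies ordinary generation (take all conjugating elements trivial), so only the converse needs the hypothesis. When $G$ is nilpotent every maximal subgroup is normal, whence $\bigcup_{g \in G} M^g = M$ for each maximal $M$. The criterion of (i) thus reduces to $X \not\subseteq M$ for every maximal $M$, which is exactly the standard criterion for $X$ to generate $G$; so the two notions coincide.

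For the substantive part (iii), fix arbitrary conjugates $x^{a_x}$ for $x \in X$ and $y^{b_y}$ for $y \in Y$, and set $H = \langle x^{a_x}, y^{b_y} \rangle$. Applying $\pi$, the elements $\pi(x)^{\pi(a_x)}$ are conjugates in $G/N$ of the elements of $\pi(X)$, so by hypothesis they generate $G/N$; hence $\pi(H) = G/N$, that is $HN = G$. Next I would observe that $H \cap N$ is normalized by $H$ (since $H$ normalizes $N$) and by $N$ (since $N$ is abelian), hence by $HN = G$, so $H \cap N$ is a $G$-submodule of the abelian group $N$. The decisive point is that the $G$-action on $N$ is conjugation, so each conjugate $y^{b_y}$ generates the same $G$-submodule of $N$ as $y$ itself (multiplying a module generator by the group-ring unit $b_y$ changes nothing). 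Consequently the $G$-submodule generated by the $y^{b_y}$ equals the one generated by $Y$, namely all of $N$. Since $H \cap N$ is a $G$-submodule containing every $y^{b_y}$, it must contain $N$, giving $N \leq H$ and therefore $H = G$. The main obstacle here, really the key idea rather than a genuine difficulty, is this last observation: it is precisely because the module structure on $N$ comes from conjugation that replacing the $y$ by arbitrary conjugates does not disturb the submodule they generate, which is what makes module-generation the correct hypothesis to pair with invariable generation. Everything else is routine unwinding of definitions.
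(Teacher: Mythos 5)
Your proof is correct. Parts (i) and (ii) are argued exactly as in the paper (which disposes of them in one line each: (i) is the definition unwound via maximal subgroups, and (ii) follows because maximal subgroups of a nilpotent group are normal, so the union of conjugates of $M$ is just $M$). The only real difference is in (iii): the paper does not prove it at all but cites Lemma 2.10 of Kantor--Lubotzky--Shalev, whereas you supply a complete self-contained argument. Your argument is sound: from $\pi(H)=G/N$ you get $HN=G$; the intersection $H\cap N$ is normalized by $H$ (as $N\trianglelefteq G$) and by $N$ (as $N$ is abelian), hence by $HN=G$, so it is a $G$-submodule of $N$; and since the $G$-module structure on $N$ is conjugation, each $y^{b_y}$ generates the same $G$-submodule as $y$, so $H\cap N$ contains the submodule generated by $Y$, which is all of $N$. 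This is precisely the point that makes ``generation as a $G$-module'' the right hypothesis to combine with invariable generation, and your identification of it as the crux is accurate. The one micro-step worth making explicit is that the set $\{\pi(x)^{\pi(a_x)} : x\in X\}$ contains, for each element of $\pi(X)$, at least one conjugate, so it generates $G/N$ by invariable generation of $\pi(X)$ even if $\pi$ identifies some elements of $X$; this is harmless since a superset of a generating set generates.
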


\begin{proof}
(i) follows immediately from the definition. (ii) follows from (i), since in a finite nilpotent group every maximal subgroup is normal. A proof of (iii) can be found for instance in {\cite[Lemma 2.10]{ig-infinite}}.
\end{proof}

In the rest of this section we shall review the notion and the properties of crowns. As we recalled in the introduction, this notion can be given for arbitrary finite groups. In this paper, however, we will use crowns only for soluble groups. 
For more details, see for instance \cite[Section 1.3]{classes}.
	
Let $G$ be a finite soluble group, and let $\mathcal V_G$ be a set
of representatives for the irreducible $G$-groups that are
$G$-isomorphic to a complemented chief factor of $G$. For $A \in
\mathcal V_G$ let $R_G(A)$ be the smallest normal subgroup contained
in $C_G(A)$ with the property that $C_G(A)/R_G(A)$ is
$G$-isomorphic to a direct product of copies of $A$ and it has a
complement in $G/R_G(A)$. The factor group $C_G(A)/R_G(A)$ is
called the $A$-crown of $G$, and it is the socle of $G/R_G(A)$. The positive integer
$\delta_G(A)$ defined by $C_G(A)/R_G(A)\cong_G A^{\delta_G(A)}$ is
called the $A$-rank of $G$ and it coincides with the number of
complemented factors in any chief series of $G$ that are
$G$-isomorphic to $A$. Moreover $C_G(A)/R_G(A)$ is complemented in $G/R_G(A)$, so that $G/R_G(A)\cong A^{\delta_G(A)}\rtimes H$ with $H\cong G/C_G(A).$

\begin{lemma}\label{corona}
	Let $G$ be a finite soluble group with trivial Frattini subgroup. There exist
	$A\in \mathcal V_G$ and a nontrivial normal subgroup $U$ of $G$ such that $C_G(A)=R_G(A) \times U.$ If $G$ is nonabelian then $A$ can be  chosen with the extra property of being a nontrivial $G$-module.
\end{lemma}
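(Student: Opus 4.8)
The plan is to exhibit the pair $(A,U)$ with $U$ a homogeneous component of $\soc(G)$. First I would record the consequences of $\Phi(G)=1$: for $G$ soluble one has $F:=F(G)=\soc(G)$, this subgroup is abelian, and as a $G$-module it is semisimple, $F=\bigoplus_B L_B$, where $B$ runs over the types of chief factors occurring in the socle and $L_B\cong_G B^{m_B}$ is the corresponding homogeneous component. Being abelian, $F$ centralises each such $B$, so $F\le C_G(A)$ for every $A$ occurring in $F$. The candidate will be $U:=L_A$ for a suitable $A$ with $L_A\ne 1$. Note that any normal $U$ with $C_G(A)=R_G(A)\times U$ satisfies $U\cong_G C_G(A)/R_G(A)\cong_G A^{\delta_G(A)}$, so $U$ is a homogeneous semisimple normal subgroup and hence lies in $\soc(G)$; thus $U=L_A$ is the only possible choice, and the real content is the existence of an $A$ for which it works. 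The abelian case is immediate: there $G=\soc(G)$ is elementary abelian, $C_G(A)=G$ for the trivial module $A=\mathbb{F}_p$, and one takes $U=L_A$ the Sylow $p$-subgroup, with $R_G(A)$ the $p'$-part.

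Assume now $G$ is nonabelian. The decisive point, and the step I expect to be the main obstacle, is to choose $A$ so that no complemented chief factor isomorphic to $A$ lies above the socle, equivalently $\delta_G(A)=m_A$. Here I would exploit that $\Gamma:=G/F$ acts \emph{faithfully} on $F$, since $C_G(F(G))=F(G)$ for soluble groups. On the other hand $\Gamma\ne 1$ (a nonabelian group with trivial Frattini subgroup is non-nilpotent, as $G'\le\Phi(G)$), so $F(\Gamma)\ne 1$, and $F(\Gamma)$ centralises every chief factor of $\Gamma$; in particular $F(\Gamma)\le\bigcap_{B\in\mathcal V_\Gamma}C_\Gamma(B)$, where $\mathcal V_\Gamma$ denotes the complemented chief factors of $\Gamma$. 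If every nontrivial $G$-composition factor $B$ of $F$ were a complemented chief factor of $\Gamma$, then $C_\Gamma(F)=\bigcap_{B}C_\Gamma(B)\supseteq\bigcap_{B\in\mathcal V_\Gamma}C_\Gamma(B)\supseteq F(\Gamma)\ne 1$, contradicting faithfulness. Hence some nontrivial constituent $A$ of $F$ satisfies $A\notin\mathcal V_\Gamma$; this $A$ is a nontrivial element of $\mathcal V_G$ with $m_A\ge 1$.

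It remains to verify $C_G(A)=R_G(A)\times L_A$ for this $A$. First, for $B\ne A$ the image of $L_B\le F\le C_G(A)$ in the crown $C_G(A)/R_G(A)\cong_G A^{\delta_G(A)}$ is a $B$-homogeneous submodule of an $A$-homogeneous module, hence trivial; thus $L_B\le R_G(A)$ and the image of $\soc(G)$ in the crown coincides with that of $L_A$. Next I would prove surjectivity $L_A R_G(A)=C_G(A)$: otherwise the image of $\soc(G)$ is a proper submodule of the semisimple crown, and a complement to it pulls back to $M\trianglelefteq G$ with $R_G(A)\le M\le C_G(A)$, $M/R_G(A)\cong A$, and $[F,M]\le F\cap M\le R_G(A)$, so $F$ acts trivially on $M/R_G(A)$; hence $M/R_G(A)$ is a complemented chief factor of $\Gamma$ isomorphic to $A$, contradicting $A\notin\mathcal V_\Gamma$. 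Finally, since the $m_A$ chief factors of type $A$ lying inside the socle (which is complemented) give $\delta_G(A)\ge m_A$, comparing orders in $C_G(A)/R_G(A)\cong L_A R_G(A)/R_G(A)\cong L_A/(L_A\cap R_G(A))$ yields $|L_A\cap R_G(A)|=|A|^{m_A-\delta_G(A)}\le 1$, whence $L_A\cap R_G(A)=1$ and $\delta_G(A)=m_A$. As $L_A$ and $R_G(A)$ are normal in $G$ with trivial intersection and product $C_G(A)$, we conclude $C_G(A)=R_G(A)\times L_A$ with $U:=L_A\ne 1$, nontrivial as a $G$-module by the choice of $A$.
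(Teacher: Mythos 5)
Your argument is correct in substance, but it follows a genuinely different route from the paper. The paper simply quotes \cite[Lemma 1.3.6]{classes} for the existence of a pair $(A,U)$ with $C_G(A)=R_G(A)\times U$, and then secures the nontriviality of $A$ by an iteration: if $A$ is a trivial module then $G=C_G(A)=R_G(A)\times U$ splits as a direct product, one replaces $G$ by the direct factor $R_G(A)$ and repeats, and the process can only terminate without producing a nontrivial module when $G$ is abelian. You instead give a self-contained construction: you identify $U$ as the $A$-homogeneous component $L_A$ of $\soc(G)=F(G)$ and find the right $A$ by playing the faithfulness of $\Gamma=G/F(G)$ on $F(G)$ against the fact that $\fit(\Gamma)\neq 1$ centralises every complemented chief factor of $\Gamma$, so some nontrivial constituent of $F(G)$ cannot occur as a complemented chief factor above $F(G)$. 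What your approach buys is an explicit description ($U=L_A$ and $\delta_G(A)=m_A$) that the paper's citation leaves hidden; what the paper's approach buys is brevity and independence from the fine structure of the socle. One step of yours needs tightening: in the surjectivity argument, $M/R_G(A)$ is not literally a chief factor of $\Gamma$, since $M$ need not contain $F(G)$. The repair is routine: since $M\cap F(G)\le R_G(A)$, you have $MF(G)/R_G(A)F(G)\cong_G M/R_G(A)\cong_G A$, this is a chief factor of $G$ above $F(G)$, and it is complemented there (combine a module complement of $M/R_G(A)$ in the crown with the complement of the crown in $G/R_G(A)$); only then do you genuinely contradict $A\notin\mathcal V_\Gamma$. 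With that adjustment the proof is complete.
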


\begin{proof}
By {\cite[Lemma 1.3.6]{classes}}, there exists
	$A\in \mathcal V_G$ and a nontrivial normal subgroup $U$ of $G$ such that $C_G(A)=R_G(A)\times U.$ 
	Assume that $A$ is a trivial $G$-module. Then $G=C_G(A)=R_G(A) \times U$. Write $R_G(A) =H$, which is nontrivial if $G$ is nonabelian. In this case there exist a crown $C_H(B)/R_H(B)$ and a nontrivial normal subgroup $W$ of $H$ such that $C_H(B)=R_H(B)\times W.$ We have $C_G(B)=C_H(B)\times U$ and $R_G(B)=R_H(B)\times U$, so $C_G(B)=R_G(B)\times W.$ This means that we may consider $B$ in place of $A.$ It is possible that also $B$ is a trivial $G$-module. In that case $G=C_G(B)=R_G(B) \times W = R_H(B) \times U \times W$, and we can repeat the previous argument with $H$ replaced by $R_H(B)$. Continuing in this way, we obtain a nontrivial irreducible $G$-module satisfying our statement, except in the case when $G$ is abelian.	
\end{proof}

The following lemma will be applied several times. It says that essentially we need to care only of what happens modulo $U$ and modulo $R_G(A)$.

	\begin{lemma}{\cite[Lemma 4 and Lemma 12]{ACheboGen}}\label{ReU}
		\label{relativo}Assume that $G$ is a finite  group with trivial Frattini subgroup and let $C=C_G(A), R=R_G(A), U$ be as in the statement of Lemma \ref{corona}. If $K \leq G$ is such that $KU =
KR= G$, then $K = G$. In particular, for $g_1,\dots,g_t \in G$ if $\langle g_1U,\dots,g_tU\rangle_I=G/U$ and
		$\langle g_1R,\dots,g_tR\rangle_I=G/R,$ then
		$\langle g_1,\dots,g_t\rangle_I=G.$
	\end{lemma}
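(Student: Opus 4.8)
The plan is to prove the first assertion — that $KU=KR=G$ forces $K=G$ — and then obtain the invariable statement as a purely formal consequence. For the latter, given $g_1,\dots,g_t$ and arbitrary $x_1,\dots,x_t\in G$, I would set $K=\langle g_1^{x_1},\dots,g_t^{x_t}\rangle$. Projecting to $G/U$, each $g_i^{x_i}U$ is a conjugate of $g_iU$, so the hypothesis $\langle g_1U,\dots,g_tU\rangle_I=G/U$ gives $KU/U=G/U$, i.e. $KU=G$; likewise $KR=G$. The first assertion then yields $K=G$ for every choice of the $x_i$, which is exactly $\langle g_1,\dots,g_t\rangle_I=G$.

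Everything thus reduces to the subgroup statement, for which I would exploit the decomposition $C=R\times U$ from Lemma \ref{corona} (where $U\cong_G A^{\delta}$ is abelian, $\delta=\delta_G(A)$) together with the defining minimality of $R=R_G(A)$. Fix a crown complement, i.e. a subgroup $B$ with $R\le B$, $B\cap C=R$ and $BC=G$ (it exists because $C/R$ is complemented in $G/R$); note $BU=B(RU)=BC=G$. Put $D=K\cap C$, $K_B=K\cap B$ and $R_0=K\cap R=D\cap R$. Applying Dedekind's law to $KU=G$ and to $KR=G$ gives $DU=DR=C$, and intersecting $KR=G$ with $B$ gives $K_BR=B$. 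First I observe $R_0\trianglelefteq G$: since $U$ centralises $R$ (direct factor) it normalises $R_0\le R$, while $K$ normalises $R_0=K\cap R$, and $G=KU$. Writing $V=K\cap U$ (normal in $G$, as $U$ is abelian and $KU=G$), a Goursat-type comparison of $D$ inside $R\times U$ shows $[R:R_0]=[U:V]=|A|^{j}$, where $U/V\cong_G A^{j}$, and that $R/R_0\cong_G U/V$; in particular $R/R_0$ is $A$-homogeneous.

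The crux is then to contradict the minimality of $R_G(A)$ whenever $j\ge 1$. Assuming $V\ne U$, I pass to $G/R_0$. Because $C=R\times U$ we get $C/R_0=(R/R_0)\times(UR_0/R_0)\cong_G A^{j}\times A^{\delta}=A^{j+\delta}$, an $A$-homogeneous abelian normal subgroup of $G/R_0$. Moreover $K_B=K\cap B$ is a complement to it: from $K_BR=B$ and $BU=G$ one gets $K_BC\supseteq K_B(RU)=(K_BR)U=BU=G$, while $K_B\cap C=K\cap(B\cap C)=K\cap R=R_0$. Hence $C/R_0$ is complemented in $G/R_0$, with $R_0\trianglelefteq G$, $R_0\subsetneq R$ and $C/R_0$ a direct product of copies of $A$ — contradicting the fact that $R=R_G(A)$ is the smallest normal subgroup of $G$ inside $C$ with this property. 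Therefore $V=U$, so $U\le K$ and $K=KU=G$.

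The main obstacle is precisely this last contradiction, and within it the production of the complement to $C/R_0$: the two hypotheses $KU=G$ and $KR=G$ live at different levels (one inside the bottom factor $U$, one modulo the top quotient $R$), and the content of the lemma is exactly that supplementing both forces $K$ to be everything. The technical heart is therefore (i) the normality of $R_0=K\cap R$ together with the $G$-isomorphism $R/R_0\cong_G U/V$, which lets me build a genuinely smaller normal subgroup carrying an $A$-homogeneous complemented crown, and (ii) extracting the required complement for free as $K\cap B$ via Dedekind's law. I expect the routine points — the index computation and the $G$-equivariance of the Goursat isomorphism (which uses that $U$ centralises $R$ and that $G=KU$) — to be straightforward, and the only real care to be in organising the argument so that the hypothesis $\Phi(G)=1$ enters solely through the splitting $C=R\times U$ furnished by Lemma \ref{corona}.
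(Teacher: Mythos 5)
The paper offers no internal proof of this lemma: it is imported verbatim from \cite{ACheboGen} (Lemmas 4 and 12), so there is nothing in the text to compare your argument against line by line. Your self-contained proof is correct. The reduction of the invariable statement to the subgroup statement is the standard formal step and is right. For the subgroup statement, all the delicate points check out: the Dedekind computations $DU=DR=C$ and $K_BR=B$; the normality in $G$ of $R_0=K\cap R$ and $V=K\cap U$ (which genuinely needs $[R,U]=1$, the commutativity of $U$, and $G=KU$); the $G$-equivariance of the Goursat isomorphism $R/R_0\cong_G U/V\cong_G A^{j}$ (again via $G=KU$ and the fact that $U$ acts trivially on both $R/R_0$ and $U/V$); and the identification of $K\cap B$ as a complement to $C/R_0\cong_G(R/R_0)\times(UR_0/R_0)\cong_G A^{j+\delta}$ in $G/R_0$, since $K_BC=(K_BR)U=BU=G$ and $K_B\cap C=K\cap R=R_0$. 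If $V\neq U$ then $R_0\subsetneq R$ is a normal subgroup of $G$ contained in $C_G(A)$ over which the quotient of $C_G(A)$ is a complemented $A$-homogeneous module, contradicting the defining minimality of $R_G(A)$; hence $U\leq K$ and $K=KU=G$. This is, in substance, the same mechanism as in the cited source -- a proper subgroup supplementing both $U$ and $R$ would manufacture a crown below $R_G(A)$ -- and your version has the merit of making explicit the two steps usually left to the reader, namely why $K\cap R$ is normal in $G$ and why the Goursat isomorphism is a $G$-isomorphism. The hypothesis $\frat(G)=1$ indeed enters only through the splitting $C=R\times U$ of Lemma \ref{corona}, as you anticipated.
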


\iffalse

	\begin{prop}{\cite[Proposition 8]{igdl}}\label{matrici} Let $K$ be a finite soluble group and let $A$ be a faithful nontrivial irreducible $K$-module. We may consider $A$ as a vector space over the field $\End_K(A)$.
		Suppose that $\langle y_1,\dots,y_d\rangle_I=K.$
		Let $w_1,\dots,w_t\in A^\delta$ with
		$w_i=(w_{1,i},\dots,w_{\delta,i})$. Let $\pi_i:A^\delta \mapsto A$ be the canonical projection onto the $i$-th component:
		$\pi_i(v_1, \dots , v_u)=v_i.$
		 For $j\in \{1,\dots,\delta\}$, consider the vector
		$r_j=\big(\pi_j(w_1), \dots , \pi_j(w_t)\big)=(w_{j,1}, \dots, w_{j,t})\in A^{t}.$
		Then $y_1w_1, y_2w_2,\dots,y_t w_t$ invariably generate
		$A^\delta\rtimes K$ if and only if the vectors $r_1,\dots,r_\delta$ are linearly independent modulo
		$B=\{(u_1,\dots,u_t)\in A^{t}\mid  u_i \in [y_i,A], \ i=1, \dots , t\}.$
%		In particular, there exist some elements $w_1,\dots,w_d\in A^\delta$ such that $y_1w_1,\dots,y_tw_t$ invariably generate
		% $A^\delta\rtimes K$ if and only if
	%	$$u\leq nd-\dim W=\sum_i \dim_{\End_K(V)}C_V(y_i).$$
	\end{prop}
	
	\begin{cor}\label{corimp}
In the notation of the previous proposition,  for $1\leq i\leq t,$ let $A_i=[y_i,A]$ and  $B_i=A/A_i.$ We may consider $A, A_i, B_i$ as vector spaces over the field $F=\End_K(A).$ Let $b_{j,i}=\pi_j(w_i)+A_i\in B_i$ and, for $1\leq j\leq \delta,$ let $\rho_j=(b_{j,1}, \dots, b_{j,t})\in B_1\times \dots \times B_t.$ Then $y_1w_1,\dots,y_tw_t$ invariably generate
	$A^\delta\rtimes K$ if and only if the vectors $\rho_1,\dots,\rho_\delta$ are linearly independent.
\end{cor}

\fi

The following represents the main result for dealing with invariable generation of finite soluble groups. 

\begin{prop}{\cite[Proposition 8]{igdl}}\label{matrici} Let $K$ be a finite soluble group and let $A$ be a faithful nontrivial irreducible $K$-module. We may consider $A$ as a vector space over the field $F = \End_K(A)$.
		Suppose that $\langle y_1,\dots,y_t\rangle_I=K.$
		Let $\delta$ be a positive integer and let $w_1,\dots,w_t\in A^\delta$ with
		$w_i=(w_{1,i},\dots,w_{\delta,i})$. Consider the matrix $W$ whose $i$-th column is $w_i$: 
\[W=\begin{pmatrix} w_{1,1}&\cdots& w_{1,t}\\\vdots& &\vdots\\ w_{\delta,1}&\cdots& w_{\delta,t}
	  \end{pmatrix}.\]
                     Then $y_1w_1,\dots,y_t w_t$ invariably generate
		$A^\delta\rtimes K$ if and only if the rows of $W$ (seen as vectors of $A^t$) are linearly independent modulo
		$B=\{(u_1,\dots,u_t)\in A^{t}\mid  u_i \in [y_i,A], \ i=1, \dots , t\}.$
		
In particular, there exist elements $w_1,\dots,w_t\in A^\delta$ such that $y_1w_1,\dots,y_tw_t$ invariably generate
		$A^\delta\rtimes K$ if and only if
		$$\delta \leq nt-\dim B=\sum_{i=1}^t \dim_F C_A(y_i).$$
	\end{prop}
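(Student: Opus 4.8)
The plan is to detect invariable generation through maximal subgroups, via Lemma~\ref{basico}(i), and to reduce everything to a condition over the field $F=\End_K(A)$. Write $N=A^\delta$ and $G=A^\delta\rtimes K$, and set $n=\dim_F A$. Since $N$ is a homogeneous semisimple $K$-module all of whose composition factors are $\cong A$, the maximal subgroups of $G$ are of two kinds. Those containing $N$ correspond to the maximal subgroups of $K$; for such an $M$, the elements $y_1w_1,\dots,y_tw_t$ are covered by conjugates of $M$ exactly when $y_1,\dots,y_t$ are covered by conjugates of $M/N$ in $K$, which never happens because $\gen{y_1,\dots,y_t}_I=K$. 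So only the second kind is relevant. If $N\not\le M$ then $M\cap N$ is a $K$-submodule with $N/(M\cap N)\cong A$; by Schur's lemma these are precisely the subspaces $N_\phi=\ker\phi$ for $0\ne\phi\in\operatorname{Hom}_K(N,A)\cong F^\delta$, and $M/N_\phi$ is a complement to $A\cong N/N_\phi$ in $G/N_\phi\cong A\rtimes K$.

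First I would run the conjugacy bookkeeping inside $A\rtimes K$. Since $N_\phi\le M$, the element $y_iw_i$ is conjugate into $M$ in $G$ if and only if its image $(y_i,\phi(w_i))$ is conjugate into the complement $M/N_\phi$ in $A\rtimes K$. A direct computation shows that for a complement $H_d$ (parametrised by a $1$-cocycle $d\in Z^1(K,A)$, so that $H_d=\{(d(k),k):k\in K\}$) one has $(y_i,a)$ conjugate into $H_d$ if and only if $a\equiv d(y_i)\pmod{[y_i,A]}$; the apparent dependence on the conjugating element washes out using the cocycle identity together with $[ky_ik^{-1},A]=k[y_i,A]$. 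Consequently $y_1w_1,\dots,y_tw_t$ \emph{fail} to invariably generate $G$ if and only if there exist $0\ne\phi\in F^\delta$ and $d\in Z^1(K,A)$ with $\phi(w_i)\equiv d(y_i)\pmod{[y_i,A]}$ for all $i$. Writing $\bar r_1,\dots,\bar r_\delta$ for the images in $A^t/B=\bigoplus_i A/[y_i,A]$ of the rows of $W$, and noting $\phi(w_i)=\sum_j\phi_j w_{j,i}$, this says exactly that $\sum_j\phi_j\bar r_j$ lies in the image of the map $\Theta\colon Z^1(K,A)\to\bigoplus_i A/[y_i,A]$, $d\mapsto(d(y_i)+[y_i,A])_i$.

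The crux -- and the step I expect to be the main obstacle -- is to prove that $\Theta$ is the zero map, i.e. that $d(y_i)\in[y_i,A]$ for every cocycle $d$ and every $i$. Equivalently (taking $a=0$ above), every $y_i$ is conjugate in $A\rtimes K$ into \emph{every} complement of $A$, i.e. $y_i$ has a fixed point on each coset space $(A\rtimes K)/H_d$. This is precisely where one must use that the $y_i$ \emph{invariably} (not merely) generate $K$: for a complement in the trivial cohomology class it is immediate, and the content is to rule out, for a nonsplit $H_d$, that some $y_i$ misses all its conjugates. I would attack this through the fixed-point reformulation on $(A\rtimes K)/H_d$ together with the hypothesis $\gen{y_1,\dots,y_t}_I=K$; note that the restriction $Z^1(K,A)\to H^1(\langle y_i\rangle,A)$ is automatically zero on $y_i$ of order prime to $\operatorname{char}F$, so only elements of order divisible by the characteristic carry content. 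Granting $\Theta=0$, the failure condition collapses to: there is $0\ne\phi$ with $\sum_j\phi_j\bar r_j=0$, that is, the rows $\bar r_1,\dots,\bar r_\delta$ are $F$-linearly dependent in $A^t/B$. This yields the stated equivalence; the easy direction (dependence $\Rightarrow$ not invariably generating) needs only the split complement $H_0=K$ and no cohomological input.

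Finally the ``in particular'' is pure linear algebra. Rows $\bar r_1,\dots,\bar r_\delta$ can be chosen $F$-linearly independent in $A^t/B$ for some $W$ if and only if $\delta\le\dim_F(A^t/B)=nt-\dim_F B$; and since $\dim_F[y_i,A]=n-\dim_F C_A(y_i)$ by rank--nullity applied to $y_i-1$ on $A$, one gets $\dim_F B=\sum_i\dim_F[y_i,A]=nt-\sum_i\dim_F C_A(y_i)$, whence the bound $\delta\le\sum_{i=1}^t\dim_F C_A(y_i)$.
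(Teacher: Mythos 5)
First, a remark on the comparison itself: the paper does not prove Proposition~\ref{matrici} at all — it is quoted verbatim from \cite[Proposition 8]{igdl} — so there is no in-paper argument to measure you against, and your attempt has to stand on its own.

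Your architecture is sound: the reduction via Lemma~\ref{basico}(i), the observation that maximal subgroups containing $N=A^\delta$ are harmless because $\gen{y_1,\dots,y_t}_I=K$, the identification of the remaining maximal subgroups as complements of $A\cong N/N_\phi$ in $G/N_\phi\cong A\rtimes K$ with $N_\phi=\ker\phi$, $\phi\in\operatorname{Hom}_K(N,A)\cong F^\delta$, the cocycle computation showing $(y,a)$ is conjugate into $H_d$ iff $a\equiv d(y)\pmod{[y,A]}$, and the rank--nullity count for the ``in particular'' clause are all correct. But the proof has a genuine gap exactly where you flag it: you never prove that $\Theta=0$, i.e.\ that $d(y_i)\in[y_i,A]$ for every $d\in Z^1(K,A)$, and you only say ``Granting $\Theta=0$''. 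Since the whole nontrivial direction of the equivalence rests on this, the argument is incomplete as written. Moreover, your diagnosis of how to close the gap points in the wrong direction. You suggest the essential input is the \emph{invariable} generation of $K$ by the $y_i$, to be exploited via fixed points on $(A\rtimes K)/H_d$ for a ``nonsplit'' $H_d$. In fact no such complements exist here: the missing ingredient is the classical vanishing theorem (due to Gasch\"utz) that $H^1(K,A)=0$ whenever $K$ is a finite \emph{soluble} group and $A$ is a faithful irreducible $K$-module. (Proof sketch: $O_p(K)=1$ for $p=\carr F$, so $L=\fit(K)$ is a nontrivial normal $p'$-group; after subtracting an inner derivation one may assume $d|_L=0$, and the cocycle identity then forces $d(k)\in C_A(L)=0$ for all $k$.) Hence every derivation is inner, $d(y)=(y-1)a\in[y,A]$ for all $y$, and $\Theta=0$ with no input from invariable generation — that hypothesis is used only to dispose of the maximal subgroups containing $N$. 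A telltale sign of the gap is that your argument never uses the solubility of $K$; without it the claim $\Theta=0$ is precisely the delicate point, and your proposed fixed-point attack would have to confront genuinely non-conjugate complements, which is not a situation that arises under the stated hypotheses.
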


We restate this proposition in a slightly different form that will suit better our exposition.

\begin{cor}\label{corimp}
In the notation of the previous proposition,  for $1\leq i\leq t$ let $A_i=[y_i,A]$ and  $B_i=A/A_i.$ Again we consider $A, A_i, B_i$ as vector spaces over the field $F=\End_K(A).$ The entries of the $i$-th column of $W$ may be seen modulo $A_i$, that is, may be seen as elements of $B_i$. Let $Z$ denote this new matrix:
\[Z=\begin{pmatrix} w_{1,1} + A_1&\cdots& w_{1,t} + A_t\\\vdots& &\vdots\\ w_{\delta,1} + A_1&\cdots& w_{\delta,t} + A_t
	  \end{pmatrix} = : \begin{pmatrix} b_{1,1}&\cdots& b_{1,t}\\\vdots& &\vdots\\ b_{\delta,1}&\cdots& b_{\delta,t}
	  \end{pmatrix}.\]
Then $y_1w_1,\dots,y_tw_t$ invariably generate
	$A^\delta\rtimes K$ if and only if the rows of $Z$ (seen as vectors of $B_1 \times \cdots \times B_t$) are linearly independent.
\end{cor}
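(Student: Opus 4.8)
The plan is to deduce this directly from Proposition \ref{matrici}, since the entire substantive content---the translation of invariable generation of $A^\delta\rtimes K$ into a linear-algebra condition---is already contained there. What remains is purely formal: to reinterpret ``linear independence of the rows of $W$ modulo $B$'' as ``linear independence of the rows of $Z$'' via a suitable quotient isomorphism.

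First I would check that each $A_i=[y_i,A]$ is an $F$-subspace of $A$, so that the quotient $B_i=A/A_i$ is a genuine $F$-vector space. Since $F=\End_K(A)$ centralises the action of $K$, the operator $a\mapsto [y_i,a]$ (whose image is $A_i$) is $F$-linear; hence its image $A_i$ is an $F$-submodule and $B_i$ carries a well-defined $F$-vector-space structure.

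Next I would identify the subspace $B\subseteq A^t$ with the product $A_1\times\cdots\times A_t$, which is immediate from the definition $B=\{(u_1,\dots,u_t)\mid u_i\in A_i\}$. This yields a canonical $F$-isomorphism
\[
A^t/B \;\cong\; (A/A_1)\times\cdots\times(A/A_t)\;=\;B_1\times\cdots\times B_t,
\]
under which the coset of $(v_1,\dots,v_t)$ is sent to $(v_1+A_1,\dots,v_t+A_t)$. Applying this to the $j$-th row $r_j=(w_{j,1},\dots,w_{j,t})$ of $W$, its image is exactly $(b_{j,1},\dots,b_{j,t})$, that is, the $j$-th row of $Z$.

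Finally I would conclude: because the isomorphism above carries the rows of $W$ (read modulo $B$) onto the rows of $Z$, a family of $F$-linear combinations of the rows of $W$ lies in $B$ precisely when the corresponding combination of the rows of $Z$ vanishes in $B_1\times\cdots\times B_t$. Thus the rows of $W$ are linearly independent modulo $B$ if and only if the rows of $Z$ are linearly independent, and combining this with Proposition \ref{matrici} gives the stated equivalence. I do not expect any real obstacle here; the only point requiring a moment's care is the verification that $A_i$ is an $F$-submodule, which is what makes the quotient $B_i$ and the displayed isomorphism $F$-linear.
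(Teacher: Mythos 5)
Your argument is correct and is exactly the intended one: the paper presents Corollary \ref{corimp} as a formal restatement of Proposition \ref{matrici} and omits the proof, which amounts precisely to your observation that $B=A_1\times\cdots\times A_t$ and that the canonical $F$-isomorphism $A^t/B\cong B_1\times\cdots\times B_t$ carries the rows of $W$ modulo $B$ to the rows of $Z$. Your preliminary check that each $A_i=[y_i,A]$ is an $F$-subspace (because $F=\End_K(A)$ commutes with the action of $y_i$) is the right point to verify, and nothing further is needed.
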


\section{Estimating $m_I(G)$}\label{sdue}
\begin{prop} \label{fac}
Let $G$ be a finite soluble group. There exists a minimal invariable generating set of cardinality $m=m(G).$
\end{prop}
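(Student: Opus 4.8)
The plan is to prove that $m_I(G) = m(G)$ for soluble $G$ by exhibiting a minimal invariable generating set of size $m(G)$, building it from an ordinary minimal generating set of maximal size via an induction on the order of $G$ that peels off a crown. First I would take an ordinary minimal generating set $X = \{x_1, \dots, x_m\}$ of $G$ with $m = m(G)$, and I would aim to produce an \emph{invariable} generating set of the same cardinality that is minimal with respect to invariable generation. The natural strategy is induction: using that $\frat(G)$ consists of non-generators, I would first reduce to the case $\frat(G) = 1$, since quotienting by the Frattini subgroup changes neither $m$ nor $m_I$ (invariable generation passes through the Frattini quotient by Lemma \ref{basico}(iii) applied suitably, and ordinary generation certainly does).

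With $\frat(G) = 1$, I would invoke Lemma \ref{corona} to find $A \in \mathcal V_G$ and a nontrivial normal subgroup $U$ with $C_G(A) = R_G(A) \times U$, so that $G/U$ and $G/R_G(A)$ are proper quotients (since $U$ and $R_G(A)$ are each nontrivial) to which induction applies. The key technical tool is Lemma \ref{ReU}: if I can arrange a set of elements $g_1, \dots, g_t$ whose images invariably generate both $G/U$ and $G/R_G(A)$, then they invariably generate $G$. So the heart of the argument is to lift a maximal minimal invariable generating set of the quotient $G/R_G(A) \cong A^{\delta} \rtimes H$ (where $H \cong G/C_G(A)$) by adjusting the coordinates in the crown $A^{\delta}$, controlling cardinalities via Proposition \ref{matrici} and Corollary \ref{corimp}. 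Concretely, starting from invariable generators $y_1, \dots, y_t$ of $H$, Corollary \ref{corimp} tells me exactly when elements $y_1 w_1, \dots, y_t w_t$ invariably generate $A^\delta \rtimes K$, namely when the rows of the matrix $Z$ over $B_1 \times \cdots \times B_t$ are linearly independent; I can use this linear-algebra criterion to tune the number of generators up or down within the crown layer so as to hit the target cardinality $m(G)$ while preserving minimality.

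The delicate point, and the main obstacle, is \textbf{minimality}: it is easy to get an invariable generating set of size $m(G)$, but I must ensure no proper subset still invariably generates. Because the failure of the \lq innocent property\rq\ (that $x,y$ generate implies $x,xy$ generate) discussed in the introduction, I cannot freely manipulate tuples, so minimality has to be engineered directly from the linear-algebra description. I expect the cleanest route is to show that for soluble groups $m_I(G) = m(G)$ as an equality of the \emph{largest} minimal sizes, and that the largest size is always \emph{achieved}: one compares the crown-by-crown contributions, noting that for each crown the ordinary and invariable maximal lengths coincide (both governed by $\dim_F C_A(y_i)$ sums versus module dimension), and then assembles the pieces so that each generator is genuinely needed modulo the appropriate crown. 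The induction step must verify that an element which is redundant for invariable generation of $G$ would already be redundant modulo $U$ or modulo $R_G(A)$, contradicting minimality in the smaller quotient; making this reduction precise, and handling the boundary between the \lq global\rq\ generators coming from $H$ and the \lq local\rq\ generators living inside the crown, is where the real care is required.
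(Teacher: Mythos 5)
There is a genuine gap: your proposal assembles the right toolbox for the \emph{wrong half} of the equality $m_I(G)=m(G)$, and the half that Proposition \ref{fac} actually asserts is never carried out. The statement only requires producing one minimal invariable generating set of cardinality $m(G)$, i.e.\ the inequality $m_I(G)\geq m(G)$. The crown machinery you invoke (Lemma \ref{corona}, Lemma \ref{ReU}, Proposition \ref{matrici}, Corollary \ref{corimp}) is what the paper needs for the \emph{converse} bound $m_I(G)\leq m(G)$ (Proposition \ref{mmi}), where one must control \emph{every} minimal invariable generating set; for the existence statement it is unnecessary, and in your write-up the decisive step --- ``tune the number of generators up or down within the crown layer so as to hit the target cardinality $m(G)$ while preserving minimality'' --- is not performed. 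You explicitly flag minimality as ``the main obstacle'' and leave it unresolved, so as written this is a plan, not a proof. The opening idea of starting from an ordinary minimal generating set of maximal size is also a dead end: such a set need not be convertible into an invariable one (the paper's own example of the transpositions in $\sym(n)$, all conjugate, illustrates how far an extremal ordinary generating set can be from invariable generation), and nothing in your argument ever uses it.

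The missing ingredient is elementary. By \cite[Theorem 2]{min}, $m(G)$ equals the number of non-Frattini factors in a chief series, so after reducing to $\frat(G)=1$ (where $m$ and $m_I$ are unchanged) one takes a minimal normal subgroup $N$ with a complement $H$ and observes $m(H)=m(G)-1$. Induction on $|G|$ gives a minimal invariable generating set $\{h_1,\dots,h_{m-1}\}$ of $H$, and for any $1\neq n\in N$ the set $\{h_1,\dots,h_{m-1},n\}$ invariably generates $G$ by Lemma \ref{basico}(iii), since $n$ generates the minimal normal subgroup $N$ as a $G$-module. Minimality is then automatic: dropping $n$ leaves a subset of the proper subgroup $H$, and dropping some $h_i$ already fails to invariably generate modulo $N$. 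No crowns, no linear algebra, and no delicate boundary between ``global'' and ``local'' generators are needed for this direction.
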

\begin{proof}We argue by induction on $|G|.$ By \cite[Theorem 2]{min}, $m(G)$ coincides with the
	number of non-Frattini factors in a chief series of $G.$ Since $m(G)=m(G/\frat(G)),$ we may assume $\frat(G)=1.$ Let $N$ be a minimal normal subgroup of $G$ and let $H$ be a complement of $N$ in $G$. By induction there exist $m(G)-1$ elements $h_1,\dots,h_{m(G)-1}$ that form a minimal invariable generating set for $H$. If $n$ is a nontrivial element of $N,$ then by Lemma \ref{basico} $\{h_1,\dots,h_{m(G)-1},n\}$ is a minimal invariable generating set of $G.$
\end{proof}

This shows that, in the soluble case, $m(G) \leqslant m_I(G)$. The following proposition shows that the other inequality holds as well. Here we use all preliminaries on crowns introduced in Section \ref{prelimi}.

\begin{prop}\label{mmi}
Let $G$ be a finite soluble group and let $m=m(G).$ If $\{x_1,\dots,x_t\}$ is a minimal invariable generating set of $G,$ then $t\leq m.$
\end{prop}

\begin{proof}
	The statement is trivially true if $G$ is nilpotent since, as observed in Lemma \ref{basico}, in this case the notion of generation and invariable generation coincide. So we may assume that $G$ is soluble but not nilpotent. We prove our statement by induction on $|G|.$ We may assume  $\frat G=1.$ Choose a nontrivial $G$-module $A\in \mathcal V_G$ such that $R=R_G(A), U, C=C_G(A)$ satisfy
	the property described in Lemma \ref{corona}.

There exists a positive integer $\delta$ such that $U\cong_G A^\delta.$ 
	  By \cite[Theorem 2]{min}, $m=m(G)$ coincides with the
	number of non-Frattini factors in a chief series of $G$, hence $m=m(G/U)+\delta.$ Up to reordering the indices, we may assume that there exists $s\leq t$ such that $x_1,\dots,x_s$ is a minimal invariable generating set of $G$ modulo $U.$ By induction $s\leq m(G/U)=m-\delta.$

We work now in $\overline G=G/R$ and, for every $g\in G$, we set $\overline g=gR.$  We have $C/R=UR/R\cong U\cong A^\delta$
	and $G/R\cong C/R \rtimes H/R$
	where $K:=H/R$ acts in the same say on each of the $\delta$ factors of $C/R\cong A^\delta$ and this action
	is faithful and irreducible.  We may identify $\overline G$ with the semidirect product $A^\delta\rtimes K$ and we can write $\bar x_i=w_iy_i$ with $w_i\in U=A^\delta$ and $y_i\in K.$ Since $\langle x_1U,\dots,x_sU\rangle_I=G/U$ and $K\cong G/C$ is an epimorphic image of $G/U,$ we deduce that $\langle y_1,\dots,y_s\rangle_I=K.$

We want to apply Proposition \ref{matrici} and Corollary \ref{corimp}, and we employ the notations used there. Moreover, for $1\leq k\leq t$ we denote by $Z_{\text{rem}(k)}$ the matrix obtained by $Z$ removing the $k$-th column: 
\[Z_{\text{rem}(k)}=\begin{pmatrix} b_{1,1}&\cdots& b_{1,k-1}& b_{1,k+1}&\cdots& b_{1,t}\\\vdots & &\vdots & \vdots & & \vdots\\ b_{\delta,1}&\cdots& b_{\delta,k-1}& b_{\delta,k+1}&\cdots& b_{\delta,t}
	  \end{pmatrix} = : \begin{pmatrix} \rho_{1,k} \\ \vdots \\ \rho_{\delta,k} \end{pmatrix}\]
(here the $\rho_{i,k}$ are row vectors; same below with the $\sigma_{i,k}$), and with $Z_{\text{kee}(k)}$ the matrix obtained by $Z$ keeping only the first $k$ columns:
\[Z_{\text{kee}(k)}= \begin{pmatrix} b_{1,1}&\cdots& b_{1,k}\\\vdots& &\vdots\\ b_{\delta,1}&\cdots& b_{\delta,k}
	  \end{pmatrix} = : \begin{pmatrix} \sigma_{1,k} \\ \vdots \\ \sigma_{\delta,k} \end{pmatrix}.\]
Since $\langle w_1y_1,\dots,w_ty_t\rangle_I=\overline G\cong A^\delta \rtimes K,$ we have that the rows of $Z$ are linearly independent. On the other hand, since $\{x_1,\dots,x_t\}$ is a minimal invariable generating set of $G,$ if $s<k\leq t$ then
 $\langle  x_1,\dots, x_s, x_{s+1},\dots, x_{k-1}, x_{k+1},\dots, x_t\rangle_I\neq  G$. Therefore, by Lemma \ref{ReU} $\langle \bar x_1,\dots,\bar x_s,\bar x_{s+1},\dots,\bar x_{j-1},\bar x_{j+1},\dots,\bar x_t\rangle_I\neq \bar G$, and consequently
the rows of $Z_{\text{rem}(k)}$ are linearly dependent.

We claim that, for every $s \leq k < t,$ adding to $Z_{\text{kee}(k)}$ the $(k+1)$-th column increases dimension of the row space, that is,
\begin{equation}\label{cresce}\dim_F \langle \sigma_{1,k},\dots,\sigma_{\delta,k}\rangle < \dim_F \langle \sigma_{1,k+1},\dots,\sigma_{\delta,k+1}\rangle.
\end{equation} Indeed, if the dimension stays the same then the $(k+1)$-th column is useless, i.e., $\dim_F \langle \sigma_{1,t},\dots,\sigma_{\delta,t}\rangle = \dim_F \langle \rho_{1,k+1}\dots,\rho_{\delta,k+1}\rangle$. But the left-hand side is equal to $\delta$, while the right-hand side is strictly smaller than $\delta$: contradiction.
Hence the claim is proved.

Since $\dim_F \langle \sigma_{1,t},\dots,\sigma_{\delta,t}\rangle = \delta$ we deduce
from (\ref{cresce}) that $t-s\leq \delta,$ from which $t \leq s+ \delta \leq (m - \delta) + \delta = m$.
\end{proof}

Combining the previous two propositions we deduce the following result.

\begin{thm}
\label{cano}
If $G$ is a finite soluble group, then $m(G)=m_I(G).$
\end{thm}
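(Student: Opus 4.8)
The final statement, Theorem 1.3 (\texttt{cano}), asserts that $m(G) = m_I(G)$ for finite soluble $G$. The plan is simply to combine the two inequalities already packaged as Propositions \ref{fac} and \ref{mmi}. Proposition \ref{fac} exhibits a minimal invariable generating set of cardinality exactly $m = m(G)$, which forces $m_I(G) \geq m(G)$ by definition of $m_I$ as the \emph{largest} cardinality of a minimal invariable generating set. Proposition \ref{mmi} shows conversely that every minimal invariable generating set $\{x_1, \dots, x_t\}$ satisfies $t \leq m(G)$, and taking the maximum over all such sets gives $m_I(G) \leq m(G)$.

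So the skeleton of the proof is: first invoke Proposition \ref{fac} to obtain $m(G) \leq m_I(G)$; then invoke Proposition \ref{mmi} to obtain $m_I(G) \leq m(G)$; and conclude equality. There is essentially no further work, since both directions have been established in full in the two preceding propositions. I would write this as a two-line argument referencing the propositions directly.

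The only point deserving a word of care is the logical bookkeeping around the definitions. By definition $m_I(G)$ is the largest size of a minimal invariable generating set, so a single set of size $m(G)$ from Proposition \ref{fac} witnesses $m_I(G) \geq m(G)$, while the universal bound $t \leq m(G)$ from Proposition \ref{mmi} applies to \emph{every} minimal invariable generating set and hence caps the maximum, giving $m_I(G) \leq m(G)$. One should also note that both propositions assume $G$ soluble, which is exactly the hypothesis of the theorem, so no extra reduction is needed.

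There is no genuine obstacle here: the theorem is a formal corollary, and all the substance lives in Propositions \ref{fac} and \ref{mmi} (the latter being the harder of the two, resting on the crown machinery of Section \ref{prelimi} and the linear-algebra criterion of Proposition \ref{matrici}/Corollary \ref{corimp}). Accordingly I would keep the proof to a single sentence combining the two inequalities.
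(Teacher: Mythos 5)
Your proof is correct and is exactly the paper's argument: the paper introduces Theorem \ref{cano} with the single line ``Combining the previous two propositions we deduce the following result,'' i.e.\ Proposition \ref{fac} gives $m(G)\leq m_I(G)$ and Proposition \ref{mmi} gives $m_I(G)\leq m(G)$. Your bookkeeping about which proposition supplies which inequality matches the paper precisely.
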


%\begin{cor}Let $G$ be a finite soluble group and let $m=m(G).$ If $x_1,\dots,x_m$ is a minimal invariable generating set of $G,$ then there exists $g_1,\dots,g_m\in G$ such that $x_1^{g_1},\dots,x_m^{g_m}$ is a minimal generating set of $G.$
%\end{cor}

It is easy to find examples of (nonsoluble) groups for which Proposition \ref{fac} fails, namely, examples of groups $G$ for which $m_I(G) < m(G)$. For instance, $m(\alt(5))=3$ while $m_I(\alt(5))=2$ (this is because any invariable generating set of $\alt(5)$ must contain an element of order 5 and an element of order 3). On the other hand, we do not have examples of groups for which $m(G) < m_I(G)$, and we raise the following question:

\iffalse

\begin{question}
	Does there exist a finite unsoluble group with $m(G)= m_I(G)$?
\end{question}

\begin{question}
	Does there exist a finite group with $m(G) < m_I(G)$?
\end{question}

\fi

\begin{question} \label{qu1}
	For a finite group $G$, is it true that $m_I(G)\leqslant m(G)$?
\end{question}

%\begin{question}
%	For a finite unsoluble group $G$ is it true that $m_I(G) < m(G)$?
%\end{question}

%Notice that if the previous questions have positive answers then the equality $m(G) = m_I(G)$ characterises the finite soluble groups among all finite groups.

It seems that often, for a nonabelian finite simple group $G$, the strict inequality $m_I(G) < m(G)$ holds. Still, there exist infinitely many examples in which equality is attained. We postpone the proof of this fact in Section \ref{pis}, since in Section \ref{alto} we will introduce some terminology that will ease the exposition.

One could even ask whether the following strengthening of Question \ref{qu1} is true:
if $\{x_1,\dots,x_t\}$ is a minimal invariable generating set of $G,$ then there exist $g_1,\dots,g_m\in G$ such that $\{x_1^{g_1},\dots,x_m^{g_t}\}$ is a minimal generating set of $G.$ Although we are not able to exhibit a soluble counterexample, the following shows that the statement is not true in general.
\begin{lemma}Let $G=\alt(29)$ and consider the following three elements:
\begin{small}
$$
\begin{aligned}a=&(2,\!3,\!4)(5,\!6,\!7)(8,\!9,\!10,\!11,\!12,\!13,\!14,\!15,\!16,\!17,\!18)(19,\!20,\!21,\!22,\!23,\!24,\!25,\!26,\!27,\!28,\!29),\\
b=&(1,\!2)(3,\!4)(5,\!6,\!7,\!8,\!9,\!10,\!11,\!12,\!13,\!14,\!15,\!16,\!17,\!18,\!19,\!20,\!21,\!22,\!23,\!24,\!25,\!26,\!27,\!28,\!29),\\
c=&(1,\!2)(3,\!4,\!5,\!6,\!7,\!8)(9,\!10,\!11,\!12,\!13,\!14,\!15,\!16,\!17,\!18,\!19,\!20,\!21,\!22,\!23,\!24,\!25,\!26,\!27,\!28,\!29).
\end{aligned}
$$
\end{small}
The set $\{a,b,c\}$ is a minimal invariable generating set of $G$, but for every $x,y,z \in G$,
$\{a^x, b^y, c^z\}$ is not a minimal generating set.
\end{lemma}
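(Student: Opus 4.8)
The plan is to reduce everything to two ingredients: the list of maximal subgroups of $\alt(29)$, and a subset-sum bookkeeping on the cycle lengths of $a,b,c$. First I record the cycle types: $a$ has type $3^2 11^2 1$, $b$ has type $2^2 25$, and $c$ has type $2\cdot 6\cdot 21$ (all even, so all lie in $\alt(29)$). Since $29$ is prime, $\alt(29)$ has no imprimitive maximal subgroups, and by Burnside's theorem together with the classification of the transitive (equivalently primitive) groups of degree $29$, every maximal subgroup is either an intransitive $(\sym(k)\times\sym(29-k))\cap\alt(29)$ with $1\leq k\leq 14$, or a conjugate of the affine group $M_0:=\AGL(1,29)\cap\alt(29)$. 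The elements of $\AGL(1,29)$ have cycle types $1^{29}$, $29$, or $d^{28/d}1$ with $d\mid 28$; since none of $3^2 11^2 1$, $2^2 25$, $2\cdot 6\cdot 21$ is of this shape, no conjugate of $a$, $b$, or $c$ lies in $M_0$. This yields the key reduction: a transitive subgroup of $\alt(29)$ containing a conjugate of any of $a,b,c$ must be all of $\alt(29)$ (transitive of prime degree is primitive, hence $2$-transitive or affine, and the affine case is excluded).

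For the invariable-generation statements I would translate, via Lemma \ref{basico}(i), the condition into subset sums: a conjugate of $g$ lies in $(\sym(k)\times\sym(29-k))\cap\alt(29)$ if and only if $k$ is a sum of a sub-multiset of the cycle lengths of $g$. Writing $S_a,S_b,S_c$ for the attainable sums, one computes $S_a\cap[1,14]=\{1,3,4,6,7,11,12,14\}$, $S_b\cap[1,14]=\{2,4\}$, and $S_c\cap[1,14]=\{2,6,8\}$. Running through $k=1,\dots,14$ one at a time, for each $k$ at least one of $a,b,c$ fails to attain $k$ (for instance $a$ misses $2,5,8,9,10,13$, while $b$ misses everything outside $\{2,4\}$); together with the fact that all three avoid $M_0$, this shows $\langle a,b,c\rangle_I=\alt(29)$. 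For minimality I exhibit, for each pair, a common attainable sum producing a common intransitive overgroup: $4\in S_a\cap S_b$, $6\in S_a\cap S_c$, $2\in S_b\cap S_c$; hence none of the three $2$-subsets invariably generates.

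For the second assertion I would use the reduction to turn ``generates $\alt(29)$'' into ``acts transitively'', and then prove the purely combinatorial claim that for every $x,y,z$ at least one of the three pairs $\{a^x,b^y\}$, $\{a^x,c^z\}$, $\{b^y,c^z\}$ is already transitive. Assuming all three intransitive, I use that a common invariant set of a pair is simultaneously a union of cycles of each member, so its size lies in the intersection of the two attainable-sum sets; these intersections force the invariant sets (up to complementation) to have sizes $6$, $2$, $4$ respectively, and in each case the subset-sum decomposition is \emph{unique}. Concretely: the size-$6$ set is exactly the two $3$-cycles of $a^x$, equal to the $6$-cycle $F$ of $c^z$; the size-$2$ set is one transposition of $b^y$, equal to the transposition $u$ of $c^z$; and the size-$4$ set is $\{\text{fixed point }f\text{ of }a^x\}$ together with one $3$-cycle of $a^x$, equal to the union of both transpositions of $b^y$. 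Chasing these identifications, $\operatorname{supp}(u)$ sits inside $\{f\}\cup\operatorname{supp}(F)$ yet is disjoint from $\operatorname{supp}(F)$ (distinct cycles of $c^z$), forcing $\operatorname{supp}(u)\subseteq\{f\}$, which is impossible as $|\operatorname{supp}(u)|=2$. Hence some pair is transitive, and by the reduction it already generates $\alt(29)$; so $\{a^x,b^y,c^z\}$ always generates $\alt(29)$ but never minimally.

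I expect the main obstacle to be the combinatorial core of the last paragraph, namely arranging the subset-sum intersections so that the forced invariant sets are rigid enough to collide. The cycle types of $a,b,c$ are evidently hand-tailored so that the pairwise sum-intersections are small and admit unique decompositions (pinning down each pair's invariant set completely), and so that the three resulting sets are mutually incompatible. The remaining inputs---the maximal subgroups of $\alt(29)$ and the passage from generation to transitivity---are standard once the classification of primitive groups of prime degree is invoked, so the real work lies in verifying the arithmetic of the attainable-sum sets and the final disjointness chase.
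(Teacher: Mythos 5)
Your proof is correct and follows essentially the same route as the paper's: reduce every proper overgroup to an intransitive one, encode ``some conjugate lies in a $k$-set stabiliser'' as a subset sum of the cycle lengths, and get the contradiction from the collision of the three pairwise forced invariant sets (of sizes $4$, $6$, $2$). The only differences are presentational: you make explicit the classification of transitive subgroups of degree $29$ that the paper hides behind ``it can be easily seen'', and you run the final disjointness chase abstractly where the paper fixes the conjugate of $a$ and tracks the concrete sets $\{1,2,3,4\}$ and $\{2,\dots,7\}$.
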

\begin{proof}
	It can be easily seen that no proper subgroup of $\alt(29)$ contains conjugates of $a$, $b$ and $c,$ so $\langle a,b,c\rangle_I=\alt(29).$ On the other hand  $\langle a, b\rangle$ stabilises $\{1,2,3,4\},$ $\langle b, c\rangle$ stabilises $\{1,2\}$ and $\langle a^{(2,8)},c\rangle$ stabilises $\{3,4,5,6,7,8\}$ so $\{a,b,c\}$ is a minimal invariable generating set of $\alt(29).$ Now we want to show that, in any
	way we conjugate $a,b,c$, two elements are sufficient in order to generate $\alt(29).$
	Without loss of generality we may assume that one of this conjugates is $a.$ Let $x,y \in \alt(29).$ If $\langle a,b^x\rangle\neq \alt(29),$
	then $\langle a,b^x\rangle$ stabilises either
	$\{1,2,3,4\}$ (in which case $\{1,2,3,4\}$ is mapped into itself by $x$) or 	$\{1,5,6,7\}$ (in which case $\{1,2,3,4\}$ is mapped to 	$\{1,5,6,7\}$  by $x$).
	Without loss of generality we may assume that $a, b$ and $x$
	stabilise 	$\{1,2,3,4\}.$ If $	\langle a, c^y\rangle \neq \alt(29)$, then it stabilises $\{2,3,4,5,6,7\}$ (and the 6-cycle in the decomposition of $c^y$ permutes the elements of this subset). But then $\langle b^x, c^y\rangle=\alt(29)$.
Indeed two conjugates of $b$ and $c$ either generate $\alt(29)$ or stabilise the same subset of cardinality 2. But this second possibility does not occur for $b^x$ and $c^y,$ indeed the support of the 2-cycle in the decomposition of $b^x$ is contained in $\{1,2,3,4\}$ while the support of the 2-cycle in the decomposition of $c^y$ must be disjoint from $\{2,3,4,5,6,7\}.$
\end{proof}

If $G$ is  a finite group, then $d_I(G)\geq d(G)$ and the difference $d_I(G)-d(G)$ can be arbitrarily large. \cite[Proposition 2.5]{ig} states that, for every $r \geq 1,$ there is a finite group $G$ such that
$d(G) = 2$ but $d_I (G) \geq r.$ We do not know whether the (somewhat opposite) inequality $m(G)\geq m_I(G)$ is true, but in any case we may exhibit examples in which the difference
$m(G)-m_I(G)$ is arbitrarily large.

A first example is given by the symmetric group $\sym(n)$. It is immediate to check that $\{(1,2), (2,3), \dots, (n-1,n)\}$ is a minimal generating set for $\sym(n)$, from which $m(\sym(n)) \geq n-1$ (in fact, Whiston \cite{whi} showed that $m(\sym(n)) = n-1$). The set of transpositions above is far from being an invariable generating set, since all its elements are conjugate. It would be interesting to exhibit \lq elegant' and \lq large' minimal invariable generating sets for $\sym(n)$ (compare with \cite{Caca}, where it is shown that all minimal generating sets of maximal size for $\sym(n)$ are \lq elegant\rq).

In any case, an easy argument (see Section \ref{alto}) shows that, in every finite group $G$, $m_I(G)$ is at most the number of conjugacy classes of maximal subgroups of $G$. Using the Classification of Finite Simple Groups, Liebeck and Shalev \cite{lisha} showed that the number of conjugacy classes of maximal subgroups of $\sym(n)$ is of the form $(1/2+o(1))n$, from which we may deduce
\begin{thm}
$m(\sym(n)) - m_I(\sym(n)) \rightarrow \infty$ as $n \rightarrow \infty$.
\end{thm}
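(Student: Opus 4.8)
The plan is to squeeze $m(\sym(n)) - m_I(\sym(n))$ from two sides, pushing one quantity up and the other down, using respectively the cited result of Whiston and the cited count of Liebeck--Shalev. First I would establish the lower bound for $m(\sym(n))$. The set of Coxeter transpositions $\{(1,2),(2,3),\dots,(n-1,n)\}$ is manifestly a generating set, and one checks easily that it is minimal: removing $(i,i+1)$ leaves a set stabilising the partition $\{1,\dots,i\}\sqcup\{i+1,\dots,n\}$, hence generating a proper (intransitive) subgroup. This already gives $m(\sym(n)) \geq n-1$, and in fact Whiston \cite{whi} proved $m(\sym(n)) = n-1$; either way we have a clean linear lower bound on $m(\sym(n))$.

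Next I would pin down the upper bound for $m_I(\sym(n))$. The key structural input is the observation, promised in Section \ref{alto}, that for any finite group $G$ the quantity $m_I(G)$ is bounded above by the number of conjugacy classes of maximal subgroups of $G$. The reason is essentially Lemma \ref{basico}(i): if $\{x_1,\dots,x_t\}$ invariably generates $G$, then for the set to be \emph{minimal} each $x_k$ must be \lq\lq essential'', meaning that $\{x_i : i \neq k\}$ fails to invariably generate, which by the maximal-subgroup criterion means the remaining elements all lie in $\bigcup_{g} M_k^g$ for some maximal subgroup $M_k$, while $x_k$ does not. One then argues that distinct indices $k$ force the associated $M_k$ to lie in distinct conjugacy classes, so $t$ is at most the number of such classes. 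With this in hand, I would invoke Liebeck--Shalev \cite{lisha}: the number of conjugacy classes of maximal subgroups of $\sym(n)$ is $(1/2 + o(1))n$, whence $m_I(\sym(n)) \leq (1/2 + o(1))n$.

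Finally I would combine the two estimates. We obtain
\[
m(\sym(n)) - m_I(\sym(n)) \;\geq\; (n-1) - \left(\tfrac{1}{2} + o(1)\right)n \;=\; \left(\tfrac{1}{2} - o(1)\right)n,
\]
and the right-hand side tends to $+\infty$ as $n \to \infty$, which is exactly the claim. Note that this also furnishes the stronger quantitative statement that the difference grows linearly, of order $n/2$.

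The main obstacle, and the only step requiring genuine care, is the upper bound $m_I(G) \leq \#\{\text{classes of maximal subgroups}\}$, since both the lower bound on $m(\sym(n))$ and the asymptotic count are quoted from the literature. The delicate point is to show that the maximal subgroups $M_k$ attached to distinct essential generators are pairwise non-conjugate (rather than merely that there are at most $t$ of them up to repetition). I expect this to follow because $x_k \notin \bigcup_g M_k^g$ but $x_j \in \bigcup_g M_k^g$ for all $j\neq k$, so the \lq\lq membership pattern'' of the tuple with respect to the conjugacy class of $M_k$ distinguishes the index $k$; if $M_k$ and $M_{k'}$ were conjugate for $k \neq k'$ the patterns would coincide, yet they differ at position $k$ versus $k'$. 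Making this dichotomy airtight is the crux, but it is elementary once the maximal-subgroup reformulation of invariable generation in Lemma \ref{basico}(i) is in place.
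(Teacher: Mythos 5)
Your proposal is correct and follows the paper's own route: the lower bound $m(\sym(n))\geq n-1$ via the Coxeter transpositions (with Whiston's equality quoted), the upper bound $m_I(G)\leq\#\{\text{conjugacy classes of maximal subgroups}\}$ via the maximal-subgroup criterion of Lemma \ref{basico}(i) (this is exactly Lemma \ref{iota}, since your pairwise non-conjugacy of the $M_k$ is the statement that the sets $M_k^*$ are distinct, hence form an independent family in $\mathcal M(G)$), and then Liebeck--Shalev's $(1/2+o(1))n$ count. The step you flag as delicate is in fact immediate: if $M_k$ and $M_{k'}$ were conjugate then $\widetilde{M_k}=\widetilde{M_{k'}}$, yet $x_k\notin\widetilde{M_k}$ while $x_k\in\widetilde{M_{k'}}$ for $k\neq k'$, a contradiction.
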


In the next section we will give a more elementary example. With this purpose, we recall that in \cite{min2} it is noticed that $m(A\times B)=m(A)+m(B)$ for every pair of finite groups $A$ and $B.$

\begin{question} Is it true that $m_I(A)+m_I(B)= m_I(A\times B)$ for every pair $(A,B)$ of finite groups?
\end{question}

It is easy to see that the inequality  $m_I(A)+m_I(B)\leq  m_I(A\times B)$ always holds. Indeed, if $\{a_1,\dots,a_r\}$ is a minimal invariable generating set of $A$ and $\{b_1,\dots,b_s\}$ is a minimal invariable generating set of $B,$ then $\{(a_1,1),\dots,(a_r,1),(1,b_1),\dots,(1,b_s)\}$ is a minimal invariable generating set of $A\times B.$ Regarding the equality, we are only able to prove a very partial result.

\begin{prop}Assume that $A$ and $B$ are finite groups without common composition factors. Then $m_I(A\times B)=m_I(A)+m_I(B).$
\end{prop}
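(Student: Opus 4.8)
Since the inequality $m_I(A)+m_I(B)\le m_I(A\times B)$ has just been established, the plan is to prove the reverse inequality $m_I(A\times B)\le m_I(A)+m_I(B)$. Everything will rest on the hypothesis on composition factors, which I would use to pin down the maximal subgroups of $G:=A\times B$ and thereby reduce invariable generation of $G$ to invariable generation of the two factors.

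First I would show that every maximal subgroup of $G$ is either of the form $M_A\times B$ with $M_A$ maximal in $A$, or of the form $A\times M_B$ with $M_B$ maximal in $B$. To do this, let $\pi_A,\pi_B$ denote the two projections and take a proper subgroup $M<G$ with $\pi_A(M)=A$ and $\pi_B(M)=B$. Putting $A_0=\{a\in A:(a,1)\in M\}$ and $B_0=\{b\in B:(1,b)\in M\}$, Goursat's lemma yields an isomorphism $A/A_0\cong B/B_0$; as both sides are quotients, of $A$ and of $B$ respectively, any nontrivial composition factor of this common quotient would be a composition factor shared by $A$ and $B$. By hypothesis there is none, so $A/A_0$ and $B/B_0$ are trivial and $M=G$, a contradiction. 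Hence every proper subgroup projects properly onto at least one factor, and if $M$ is maximal with, say, $\pi_A(M)\neq A$, then $M\le \pi_A(M)\times B<G$ forces $M=\pi_A(M)\times B$ with $\pi_A(M)$ maximal in $A$.

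Next I would translate this into a generation statement. Since the $G$-conjugates of $M_A\times B$ are precisely the subgroups $M_A^{a}\times B$ with $a\in A$, one has $\bigcup_{g\in G}(M_A\times B)^g=\big(\bigcup_{a\in A}M_A^{a}\big)\times B$, and symmetrically for the other family. Feeding this description of the maximal subgroups into Lemma \ref{basico}(i) (applied to $G$, and to $A$ and $B$ separately) gives the clean criterion: a subset $X\subseteq G$ satisfies $\gen{X}_I=G$ if and only if $\gen{\pi_A(X)}_I=A$ and $\gen{\pi_B(X)}_I=B$.

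Finally I would run a short counting argument. Let $X=\{x_1,\dots,x_t\}$ be a minimal invariable generating set of $G$ with $x_i=(a_i,b_i)$. By the criterion, $\{a_1,\dots,a_t\}$ and $\{b_1,\dots,b_t\}$ invariably generate $A$ and $B$ respectively, so I may select $I_A,I_B\subseteq\{1,\dots,t\}$ such that $\{a_i:i\in I_A\}$ and $\{b_i:i\in I_B\}$ are minimal invariable generating sets of $A$ and $B$; in particular $|I_A|\le m_I(A)$ and $|I_B|\le m_I(B)$. Then $\{x_i:i\in I_A\cup I_B\}$ projects onto invariable generating sets of both factors (a superset of an invariable generating set is again one), hence invariably generates $G$ by the criterion; minimality of $X$ forces $I_A\cup I_B=\{1,\dots,t\}$, so $t\le|I_A|+|I_B|\le m_I(A)+m_I(B)$. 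I expect the one genuinely substantive step to be the structural description of the maximal subgroups in the first step, which is exactly where the absence of common composition factors is indispensable; the remaining steps are formal consequences of Lemma \ref{basico}(i).
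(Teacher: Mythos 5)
Your proof is correct and follows essentially the same route as the paper's: project a minimal invariable generating set onto the two factors, extract minimal invariable generating subsets indexed by $I_A$ and $I_B$, and use minimality to force $I_A\cup I_B$ to be the whole index set. The only difference is that you spell out (via Goursat's lemma and the product form of maximal subgroups) the criterion that $\gen{X}_I=A\times B$ if and only if both projections invariably generate, a step the paper's proof uses implicitly without justification.
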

\begin{proof}
	Assume that $g_1=(a_1,b_1),\dots,g_m=(a_m,b_m)$ is an invariable generating set of $G=A\times B.$ There exists $I\subseteq \{1,\dots,m\}$ such that $\{a_i \mid i \in I\}$ is a minimal invariable generating set for $A$ and $J\subseteq \{1,\dots,m\}$ such that $\{b_j \mid j \in J\}$ is a minimal invariable generating set for $B$. Then $\{(a_k,b_k)\mid k \in I\cup J\}$ is an invariable generating set for $A\times B.$ So $m_I(A\times B)\leq m_I(A)+m_I(B).$ 
\end{proof}

\section{An example: $m_I(\alt(5)^n)$}\label{stre}
\label{alto}	

Since $m(\alt(5))=3$ and $m(A\times B)=m(A)+m(B)$ for every pair of finite groups $A$ and $B$, we have $m(\alt(5)^n)=3n.$ We are going to show that $m(\alt(5)^n)-m_I(\alt(5)^n) \rightarrow \infty$ as $n \rightarrow \infty$. Indeed we shall prove:
\begin{prop}
	$m_I(\alt(5)^n)= n \cdot m_I(\alt(5)) = 2n$.
\end{prop}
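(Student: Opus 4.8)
The plan is to work entirely through the criterion of Lemma \ref{basico}(i): a subset invariably generates $G=\alt(5)^n$ if and only if it is not contained in $\bigcup_{g}M^g$ for any maximal subgroup $M$. So the first step is to recall the classical description of the maximal subgroups of $T^n$ with $T=\alt(5)$: each is either of \emph{product type} $M_i:=\{g : g_i\in M\}$ for a coordinate $i$ and a maximal subgroup $M\le T$, or of \emph{diagonal type} $D_{i,j}^{\phi}:=\{g : g_j=\phi(g_i)\}$ for $i\neq j$ and $\phi\in\aut(T)$. Conjugating $M_i$ or $D_{i,j}^\phi$ and reading off the union of conjugates, one checks that an element $x$ is conjugate into $M_i$ iff $x_i$ is conjugate into $M$, and that $x$ is conjugate into $D_{i,j}^\phi$ iff $x_j$ is $T$-conjugate to $\phi(x_i)$ (a condition depending on $\phi$ only through its class in $\out(T)=\mathbb{Z}/2\mathbb{Z}$). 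Hence $S$ invariably generates $G$ iff: (A) for every coordinate $i$ the projection $\pi_i(S)$ invariably generates $\alt(5)$, i.e.\ contains an element of order $5$ and one of order $3$; and (B) for each pair $i<j$ and each $\bar\phi\in\out(T)$ some $x\in S$ satisfies $x_j\not\sim\phi(x_i)$. The lower bound $m_I(\alt(5)^n)\ge 2n$ is then immediate from the inequality $m_I(A)+m_I(B)\le m_I(A\times B)$ recorded above together with $m_I(\alt(5))=2$ (alternatively, take for each coordinate one order‑$5$ and one order‑$3$ element supported there).

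For the upper bound I would induct on $n$, the base case $n=1$ being $m_I(\alt(5))=2$. Let $X$ be a minimal invariable generating set of $G$ and let $\rho:G\to\alt(5)^{n-1}$ forget the last coordinate. Since a maximal subgroup of the quotient pulls back to a maximal subgroup of $G$, the image of an invariable generating set invariably generates the quotient, so $\rho(X)$ invariably generates $\alt(5)^{n-1}$. Choose $Y\subseteq X$ minimal with $\rho(Y)$ invariably generating $\alt(5)^{n-1}$; then $\rho$ is injective on $Y$ and $\rho(Y)$ is a minimal invariable generating set, whence $|Y|\le m_I(\alt(5)^{n-1})\le 2(n-1)$ by induction. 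Everything reduces to proving $|X\setminus Y|\le 2$, which would give $|X|\le 2n$.

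By minimality each $z\in X\setminus Y$ admits a \emph{witness}: a maximal subgroup $M_z$ into which every element of $X\setminus\{z\}$ (in particular every element of $Y$) is conjugate, while $z$ is not. The main obstacle here is the diagonal maximal subgroups — this is exactly the phenomenon that makes the product formula $m_I(A\times B)=m_I(A)+m_I(B)$ unavailable, the two factors sharing composition factors. The first point is that $M_z$ must involve the last coordinate: otherwise $M_z=\rho^{-1}(M')$ for a maximal subgroup $M'$ of $\alt(5)^{n-1}$, and then all of $\rho(Y)$ would be conjugate into $M'$, contradicting that $\rho(Y)$ invariably generates. Thus each $M_z$ is either product type at coordinate $n$ (detecting order $5$ via $A_4$ or $S_3$, or order $3$ via $D_{10}$) or diagonal of the form $D_{i,n}^{\phi}$.

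The finishing count then runs as follows. At most one $z$ can carry a product-type order‑$5$ witness at coordinate $n$, and at most one an order‑$3$ witness (two of the same kind contradict each other). For two distinct leftover elements carrying diagonal witnesses $D_{i,n}^{\phi}$ and $D_{i',n}^{\phi'}$, evaluating both relations on $Y$ forces either a diagonal relation between columns $i$ and $i'$ of $\rho(Y)$ (if $i\neq i'$), contradicting (B) for $\rho(Y)$, or the condition $w\sim\tau(w)$ on every entry of column $i$ of $Y$ (if $i=i'$ but $\bar\phi\neq\bar\phi'$), which forces that column to omit order $5$, contradicting (A); hence there is at most one diagonal witness, say $D_{i_0,n}^{\phi}$. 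Finally a product-type witness at coordinate $n$ cannot coexist with a diagonal one: if some $z_1$ has an order‑$r$ product witness ($r\in\{3,5\}$) at $n$ and some $z_2$ the diagonal witness $D_{i_0,n}^{\phi}$, then every $y\in Y$ has $y_n$ of order $\neq r$ and $y_n\sim\phi(y_{i_0})$, so $y_{i_0}$ has order $\neq r$ for all $y$, contradicting that column $i_0$ of $\rho(Y)$ contains an element of order $r$ by (A). Therefore $|X\setminus Y|\le 2$, closing the induction. The delicate point throughout, and the one I expect to require the most care in a full write-up, is the bookkeeping with the two $T$-classes of $5$-elements that the outer automorphism interchanges, since this is precisely what makes condition (B) sensitive to $\bar\phi\in\out(\alt(5))$ and drives every contradiction above.
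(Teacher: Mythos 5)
Your proof is correct, and it takes a genuinely different route from the paper's. The paper first passes to the combinatorial invariant $\iota(G)$ of Lemma \ref{iota} (the largest independent family among the sets $M^*$ of conjugacy classes meeting a maximal subgroup) and then proves the stronger statement $\iota(\alt(5)^n)\leq 2n$ by a global, non-inductive count: it orders an independent family by type (diagonal subgroups first, then the two flavours of product-type subgroups), tracks how the intersections $\Delta_i$ shrink when restricted to tuples avoiding one class of $5$-elements via estimates of the form $\abs{\Delta_i^*}\leq\abs{\Delta_{i-1}^*}/2$, and closes with two linear inequalities whose sum gives $2t\leq 4n$. You instead bound $m_I$ directly by induction on $n$: you project away the last coordinate, extract a subset $Y$ of size at most $2(n-1)$ whose image is a minimal invariable generating set of $\alt(5)^{n-1}$, and show $\abs{X\setminus Y}\leq 2$ by a local analysis of the ``witness'' maximal subgroups, which must involve the last coordinate and fall into mutually constraining types (at most one order-$5$-detecting product witness, at most one order-$3$-detecting one, at most one diagonal witness, and product and diagonal witnesses excluding each other via conditions (A) and (B)). All the individual steps check out: the description of maximal subgroups of $\alt(5)^n$ and of the conjugacy condition for diagonal subgroups, the fact that $\rho(Y)$ is minimal and $\rho$ is injective on $Y$, and the fact that an outer automorphism of $\alt(5)$ fixes the classes of elements of orders $1,2,3$ and swaps the two classes of $5$-elements (which is what makes the case $i=i'$, $\bar\phi\neq\bar\phi'$ contradict (A)). Your approach is arguably more transparent and modular, and it generalises more readily to powers of other simple groups whose invariable generation is controlled by element orders; the paper's approach yields the formally stronger conclusion on $\iota$ and avoids induction. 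One incidental remark: the paper describes the pairing of the two classes of $5$-elements as inversion ($C_5=C_4^{-1}$), whereas every element of $\alt(5)$ is in fact conjugate to its inverse; the correct description is the one you use, namely the swap induced by the outer automorphism. This does not affect either argument, since only the existence of an involution on the five classes fixing three of them and swapping the two classes of order $5$ is used.
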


Notice first that, by what we said in the previous section, $n \cdot m_I(\alt(5))\leq m_I(\alt(5)^n),$ so it suffices to show $m_I(\alt(5)^n)\leq 2n$. Let us introduce some considerations that we will employ to prove the previous proposition. A minimal invariable generating set of $G$ cannot contain two conjugate elements so it may be  identified with a subset of set $\mathcal C(G)$ of the conjugacy classes of $G$. For every maximal subgroup $M$ of $G$ denote by $M^*$ the subset of $\mathcal C(G)$ consisting of the conjugacy classes of $G$ with non-empty intersection with $M$. Let $C_1,\dots,C_t$ be a set of distinct conjugacy classes of $G$ and, for every $1\leq i\leq t,$ choose a representative $g_i\in C_i.$ We have that $\gen{g_1,\dots,g_t}_I=G$ if and only if $\{g_1,\dots,g_t\}\not\subseteq \cup_{g\in G}M^g$ (i.e.
$\{C_1,\dots,C_t\}\not\subseteq M^*$) for all maximal subgroups $M$ of $G$. Let $$\mathcal M(G)=\{M^*\mid M \text { a maximal subgroups of $G$}\}.$$ We say that a subset $\{X_1,\dots,X_t\}$ of $\mathcal M(G)$ is independent if, for every  $1\leq i\leq t,$ the intersection $\cap_{j\neq i}X_j$ is properly contained in $\cap_j X_j.$ We denote by $\iota(G)$ the largest cardinality of an independent subset of $\mathcal M(G).$
\begin{lemma}
\label{iota}
	$m_I(G)\leq \iota(G).$
\end{lemma}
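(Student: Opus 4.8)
The plan is to start from a minimal invariable generating set witnessing $m_I(G)$ and manufacture, out of it, an independent subfamily of $\mathcal M(G)$ of the same size; this immediately yields $\iota(G)\geq m_I(G)$. So let $\{x_1,\dots,x_t\}$ be a minimal invariable generating set with $t=m_I(G)$, and for each $i$ let $C_i\in\mathcal C(G)$ be the conjugacy class of $x_i$. Since a minimal invariable generating set contains no two conjugate elements, the classes $C_1,\dots,C_t$ are pairwise distinct. Throughout I will use the criterion recalled just before the statement: for representatives $g_1,\dots,g_r$ of distinct classes $C_1,\dots,C_r$, one has $\langle g_1,\dots,g_r\rangle_I=G$ if and only if $\{C_1,\dots,C_r\}\not\subseteq M^*$ for every maximal subgroup $M$.

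The main construction is as follows. Fix $i$. By minimality, $\{x_j:j\neq i\}$ does \emph{not} invariably generate $G$, so by the criterion there exists a maximal subgroup $M_i$ with $\{C_j:j\neq i\}\subseteq M_i^*$. Set $X_i:=M_i^*\in\mathcal M(G)$. The one point that requires an argument (and which I expect to be the only genuinely load-bearing step) is that $C_i\notin X_i$: indeed, if we had $C_i\in M_i^*$ as well, then $\{C_1,\dots,C_t\}\subseteq M_i^*$, contradicting the fact that $\{x_1,\dots,x_t\}$ invariably generates $G$. Thus for every $i$ we have secured a set $X_i$ with the clean dichotomy
\[
C_j\in X_i\ \text{ for all } j\neq i,\qquad C_i\notin X_i.
\]

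It remains to check that $\{X_1,\dots,X_t\}$ is an independent subset of $\mathcal M(G)$ of cardinality $t$; here the classes $C_i$ themselves serve as the independence witnesses. Fix $i$. For every $j\neq i$ we have $C_i\in X_j$ (apply the dichotomy to index $j$, using $i\neq j$), whence $C_i\in\bigcap_{j\neq i}X_j$; on the other hand $C_i\notin X_i$, so $C_i\notin\bigcap_{j}X_j$. Since $\bigcap_j X_j\subseteq\bigcap_{j\neq i}X_j$ always, this shows $\bigcap_j X_j\subsetneq\bigcap_{j\neq i}X_j$ for each $i$, which is precisely independence. The same computation also shows the $X_i$ are pairwise distinct (if $X_i=X_{i'}$ with $i\neq i'$, then $C_i\notin X_i=X_{i'}$ contradicts $C_i\in X_{i'}$), so $\{X_1,\dots,X_t\}$ genuinely has $t$ elements. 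Therefore $\iota(G)\geq t=m_I(G)$, as required. (The degenerate case $t=1$ is covered by the same argument under the convention that the empty intersection equals all of $\mathcal C(G)$, and could be flagged explicitly if desired.)
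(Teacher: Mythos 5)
Your proof is correct and follows essentially the same route as the paper's: pick, for each $i$, a maximal subgroup $M_i$ whose class-set $M_i^*$ contains all the $C_j$ with $j\neq i$ but not $C_i$, and use the classes $C_i$ as independence witnesses. You simply spell out the verification of independence (and the pairwise distinctness of the $M_i^*$) that the paper leaves implicit.
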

\begin{proof}
Let $m=m_I(G)$ and let $\{x_1,\dots,x_m\}$ be a minimal invariable generating set of $G.$ For $1\leq i \leq m$ let $C_i$ be the conjugacy class of $G$ containing $x_i.$ For every $1\leq i \leq m,$ there exists a maximal subgroup $M_i$ of $G$ such that
$\{C_1,\dots,C_{i-1},C_{i+1},\dots,C_m\} \subseteq M_i^*$ but $C_i\notin M_i^*.$ It follows that $\{M_1^*,\dots,M_m^*\}$ is an independent subset of $\mathcal M(G),$ and therefore $m\leq \iota(G).$
\end{proof}

\begin{prop}
	$\iota(\alt(5)^n)\leq 2n.$
\end{prop}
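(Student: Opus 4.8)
The plan is to understand the structure of $\MM(\alt(5)^n)$ by first classifying the maximal subgroups $M$ of $G=\alt(5)^n$ and then, for each such $M$, determining the associated subset $M^*$ of conjugacy classes. The conjugacy classes of $\alt(5)$ are the identity, two classes of $5$-cycles, one class of $3$-cycles, and one class of double transpositions (elements of order $2$). Since a conjugacy class of $\alt(5)^n$ is a product $C_1\times\cdots\times C_n$ of conjugacy classes of the factors, a class $M^*$ is determined by which product classes meet $M$. The key observation is that an element of $G$ lies in $\bigcup_{g}M^g$ precisely when its cyclic subgroup can be mapped inside $M$, so $M^*$ records a combinatorial condition on the orders/types appearing in each coordinate.

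First I would recall the classification of maximal subgroups of a direct power of a nonabelian simple group $T=\alt(5)$: every maximal subgroup $M$ of $T^n$ is either of \emph{product type}, i.e. $M = T^{n-1}\times M_0$ (up to reordering) where $M_0$ is a maximal subgroup of one factor $T$, or of \emph{diagonal type}, i.e. the preimage of a full diagonal subgroup $\{(t,\varphi(t)) : t\in T\}$ of a product of two coordinates $T\times T$ (with the other $n-2$ coordinates free), where $\varphi\in\aut(T)$. This dichotomy is standard (it follows from the analysis of maximal subgroups of groups with a product socle). For each type I would compute $M^*$ explicitly: for a product-type $M$ built from a maximal subgroup $M_0<\alt(5)$, the class $C_1\times\cdots\times C_n$ meets $M$ iff $C_i$ meets $M_0$ (and all other $C_j$ are unrestricted); for a diagonal-type $M$ identifying coordinates $i,j$ via $\varphi$, a product class meets $M$ iff $\varphi$ carries $C_i$ to $C_j$ (equivalently the classes in the two coordinates are matched under the automorphism), with the remaining coordinates unrestricted.

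With these descriptions in hand, I would bound $\iota(\alt(5)^n)$ by a counting/linear-independence style argument. The guiding principle is that each member $M^*$ of an independent family imposes a nontrivial constraint localised to one or two coordinates, and that independence forces these constraints to be ``new''. The clean way is to associate to the whole configuration a coordinate-bookkeeping: product-type constraints are attached to a single coordinate, diagonal-type constraints to a pair of coordinates. I would argue that an independent family of size larger than $2n$ would force, by a pigeonhole/rank argument across the $n$ coordinates, two of the $M^*$ to coincide or one to be redundant (its defining intersection already captured by the others), contradicting independence. The target bound $2n$ is exactly consistent with the lower bound $m_I(\alt(5)^n)\ge n\cdot m_I(\alt(5))=2n$ coming from the previous section, so the combinatorics should balance so that each coordinate contributes at most $2$ to any independent family.

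The main obstacle I expect is precisely controlling the interaction between diagonal-type subgroups, which couple two coordinates, and ruling out the possibility that cleverly chosen diagonal constraints across many coordinate pairs build an independent family exceeding $2n$. A naive per-coordinate count gives each of the $n$ coordinates a small budget, but diagonal subgroups can be shared among overlapping pairs, so I must show these shared constraints cannot be simultaneously independent beyond the $2n$ threshold. The cleanest route is likely to set up, for a putative independent family $\{M_1^*,\dots,M_t^*\}$, witnessing product classes $W_i\in\bigcap_{j\ne i}M_j^*\setminus\bigcap_j M_j^*$, and to extract from the $W_i$ a set of $t$ objects (cycle-type data in the relevant coordinates) that must be distinct and fit into a space of dimension at most $2n$; bounding that dimension by analysing how product- and diagonal-type members restrict the coordinate data is where the real work lies, and I would handle the diagonal case by treating the identified pair of coordinates as contributing jointly.
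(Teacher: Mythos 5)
Your setup is exactly the paper's: you correctly classify the maximal subgroups of $\alt(5)^n$ into product type and diagonal type, and you correctly describe the associated sets $M^*$ of conjugacy classes (the paper records these as four families $A_i$, $B_i$, $C_{i,j}$, $D_{i,j}$, where the two diagonal families arise from inner versus outer automorphisms of $\alt(5)$, the outer ones acting on classes as inversion since the two classes of $5$-cycles are mutually inverse). But the actual bound is not proved. Everything after ``With these descriptions in hand'' is a declaration of intent: you say a family of size larger than $2n$ ``would force, by a pigeonhole/rank argument,'' a redundancy, and you yourself flag that handling the diagonal constraints ``is where the real work lies.'' That is precisely the content of the proposition, and it is not supplied. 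A per-coordinate budget of $2$ does not follow from any straightforward pigeonhole: a single diagonal constraint couples two coordinates, chains of diagonal constraints merge many coordinates into one equivalence class, and a second diagonal constraint on an already-merged pair is \emph{not} automatically redundant (it can force $\omega=\omega^{-1}$, i.e.\ exclude the $5$-cycle classes in that block), so independent families can mix per-coordinate and per-pair constraints in ways your sketch does not control.

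What the paper actually does is quantitative rather than combinatorial-by-coordinates. It orders an independent family so that diagonal constraints come first, then the $A$-type, then the $B$-type; tracks the equivalence relation on $\{1,\dots,n\}$ generated by the diagonal pairs; and measures progress by the cardinality of $\Delta_i^*$, the running intersection restricted to $(\Omega^*)^n$ where $\Omega^*$ omits the involution class (which lies in every $M^*$ and so can never witness independence). Each ``new'' diagonal constraint divides $|\Delta^*|$ by $4$, and each remaining independent constraint divides it by at least $2$; since $|\Delta^*|=4^n$ this yields $2a_1+a_2+b+c_1\le 2n$, which combined with the two easy inequalities $a_2+b+c_2\le n$ and $c_1+c_2\le n$ gives $2t\le 4n$. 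Some mechanism of this kind — a potential function that every independent constraint must strictly decrease by a controlled amount — is needed; without it your argument has a genuine gap exactly at the step you identified as the hard one.
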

\begin{proof}
	We have 5 conjugacy classes $C_1,C_2,C_3,C_4,C_5$ in $\alt(5)$ with representatives $1, (1,2)(3,4), (1,2,3), (1,2,3,4,5), (1,5,4,3,2).$ Notice that $C_1=C_1^{-1},$
	$C_2=C_2^{-1},$ $C_3=C_3^{-1},$ while $C_5=C_4^{-1}$
	and $C_4=C_5^{-1}.$
	Moreover a maximal subgroup of $\alt(5)$ is isomorphic to $\alt(4), \perm(3)$ or $D_{10}$ and $\mathcal M(\alt(5))$ contains only two elements: $Y_1=\{C_1,C_2,C_3\}$ and $Y_2=\{C_1,C_2,C_4,C_5\}.$
	Let $\Omega=\{C_1,C_2,C_3,C_4,C_5\}$, $\Omega^*=\{C_1,C_3,C_4,C_5\}$, $\Delta=\Omega^n$ and $\Delta^*=(\Omega^*)^n$. Notice that we are identifying
	the elements of $\Delta$	with the conjugacy classes of
$\alt(5)^n$
	.
	
	Let $G=\alt(5)^n.$ A maximal subgroup $M$ of $G$ can be of two different kinds:
	\begin{enumerate}
		\item there exist $1\leq i\leq n$ and a maximal subgroup $Y$ of $\alt(5)$ such that $(x_1,\dots,x_n)\in M$ if and only if $x_i\in Y$ (\slshape{product type}).
		\item there exist $1\leq i<j\leq n$ and $\phi\in \aut(\alt(5))$ such that $(x_1,\dots,x_n)\in M$ if and only if $x_j=x_i^\phi$ (\slshape{diagonal type}).
	\end{enumerate}
	As a consequence, the elements of $\mathcal M(G)$ are of the following kinds:	
	\begin{enumerate}
		\item $A_i=\{(\omega_1,\dots,\omega_n)\in \Delta \mid \omega_i\in Y_1\},$
		\item $B_i=\{(\omega_1,\dots,\omega_n)\in \Delta \mid \omega_i\in Y_2\},$
		\item $C_{i,j}=\{(\omega_1,\dots,\omega_n)\in \Delta \mid \omega_i=\omega_j\},$
		\item $D_{i,j}=\{(\omega_1,\dots,\omega_n)\in \Delta \mid  \omega_i=\omega_j^{-1}\}.$
	\end{enumerate}
	We now assume that $\{X_1,\dots,X_t\}$ is an independent subset of $\mathcal M(G)$ and we set
	$\Delta_i=X_1\cap \dots \cap X_i$, $\Delta_i^*=\Delta_i\cap 
	\Delta^*.$  Moreover let $\Lambda_i$ be the set of the $j\in \{1,\dots,n\}$ such that $\omega_j\notin \{C_4,C_5\}$ for every $(\omega_1,\dots,\omega_n)\in \Delta_i.$
	
	We may assume that there exist $a,b$ such that
	\begin{itemize}
		\item If $i\leq a$ then there exists $I_i=(r_i,s_i)$ such that
		$X_i\in \{C_{r_i,s_i},D_{r_i,s_i}\}.$
		\item If $a<i\leq a+b$ then $X_i=A_r$ for some $r$.
		\item If $a+b<i$ then $X_i=B_r$ for some $r$.
	\end{itemize}
%We can view to any element of $\Delta_i$ as an $n\times 5$ %matrix with entries in the set $\{C_1,C_2,C_3,C_4,C_5\}$.
For $i\leq a$, let $\rho_i$ be the smallest equivalence relation on $\{1,\dots,n\}$ containing all the pairs $(r_j,s_j)$ with $j\leq i.$ We may assume, up to reordering the indices, that there exists $a_1\leq a$ such that for every $2\leq i\leq a_1$ the relation $\rho_{i-1}$ is finer than $\rho_{i},$ while $\rho_i=\rho_{a_1}$ if $i>a_1.$
	We can describe how $\Delta_{a_1}$ looks like. Assume that $B_1,\dots,B_l$ are the equivalence classes of the relation $\rho_{a_1}.$ Then $\Delta_{a_1}$ 
	 is a product of $l$ \lq diagonal subsets\rq \, each of cardinality $5$: if $i_1,i_2\in B_j$ for some $1\leq j\leq l,$ then there exists $\epsilon_{i_1,i_2}=\pm 1$ such that $\omega_{i_2}=\omega_{i_1}^{\epsilon_{i_1,i_2}}$ for every $(\omega_1,\dots,\omega_n)\in \Delta_{a_1}.$
	 In particular, since $l\leq n-a_1$, we have
	 $$|\Delta_{a_1}|=5^l\leq 5^{n-a_1} \text { and } |\Delta^*_{a_1}|\leq {4^{n-a_1}}.$$ Now assume $a_1<i\leq a$. There exists an equivalence class $B_j$ of $\rho_{a_1}$ containing $r_i$ and $s_i$ and $\eta=\pm 1,$  that $\omega_{s_i}=\omega_{r_i}^\eta$ for every $(\omega_1,\dots,\omega_n)\in X_i.$ As we noticed above, there already exists  $\epsilon=\epsilon_{r_i,s_i}$ such that $\omega_{s_i}=\omega_{r_i}^{\epsilon}$ for every $(\omega_1,\dots,\omega_n)\in \Delta_{a_1}.$ We must have $\eta=-\epsilon$ (otherwise $\Delta_{a_1}\cap X_i=\Delta_{a_1}$), and consequently $\omega_{s_i}=\omega_{r_i}=\omega_{r_i}^{-1},$ (i.e. $\omega_{r_i} \notin \{C_4,C_5\}$) for every $(\omega_1,\dots,\omega_n)\in \Delta_i.$
	 In particular
	$$ |\Delta^*_i|\leq \frac {|\Delta_{i-1}^*|}{2}.$$
	Notice also that $|\Lambda_{a_1}|=0$ and $|\Lambda_a|\geq a_2,$ where we set $a_2=a-a_1.$
	
	Now assume $a<i \leq a+b$: again when we consider the intersection $\Delta_{i-1}\cap X_i$ we add the restriction that $\omega_i$ cannot belong to $\{C_4,C_5\}$, so $i\notin \Lambda_a$ (otherwise $\Delta_{a}\cap X_i=\Delta_{a_1}$) and $$ |\Delta^*_i|\leq \frac {|\Delta_{i-1}^*|}{2}.$$ Moreover $|\Lambda_{a+b}|\geq a_2+b.$
	
	Finally let $a+b<i.$ We may assume that there exists $c_1$ such that $X_i=B_r$ with $r \in \Lambda_{a+b}$ 	if and only if $i\leq a+b+c_1$. If $a+b<i\leq a+b+c_1,$ then
 $$|\Delta^*_i|\leq \frac {|\Delta_{i-1}^*|}{2}.$$ 
 %Finally for $a+b+c_1< i\leq a+b+c_1+c_2=t$ we have $$|\Delta^*_i|\leq \frac {3|\Delta_{i-1}^*|}{4}.$$ 
	We must have $$1\leq |\Delta^*_{a+b+c_1}|\leq  \frac{4^n }{4^{a_1}\cdot 2^{a_2+b+c_1}}$$ 
	and consequently $2a_1+a_2+b+c_1\leq 2n.$
Set $c_2=c-c_1$.  Notice that $a_2+b+c_2\leq n$ (since $c_2\leq |\{1,\dots,n\}\setminus \Lambda_{a+b}|\leq n-a_2-b$)	and $c_1+c_2\leq n$ (since there are at most $n$ maximal subgroups of kind $B_r$), hence $2c_2+a_2+b+c_1\leq 2n.$ But then $2t=(2a_1+a_2+b+c_1)+(2c_2+a_2+b+c_1)\leq 4n$, from which $t \leq 2n$.
\end{proof}

\section{$m_I(G)=m(G)$ with $G$ nonabelian simple}
\label{pis}
In this section we will exhibit infinitely many nonabelian finite simple groups $G$ for which $m_I(G)=m(G)$ holds.

\begin{prop} Assume $p$ is a prime such that the following conditions are both satisfied: $p \equiv 1$ mod $40$ and $p \equiv 2$ mod $3$. Then $m_I(\psl(2,p))=m(\psl(2,p))=3$.
\end{prop}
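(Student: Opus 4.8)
The plan is to compute $m_I(G)$ through the machinery of Section~\ref{alto} (the sets $M^\ast$ and the invariant $\iota(G)$), and to take the value $m(G)=3$ from the known analysis of independent generating sets of $\psl(2,p)$ due to Whiston and Saxl, the congruences placing us in the generic case where this value is $3$. First I would record the maximal subgroups of $G=\psl(2,p)$ via Dickson's theorem. Since $p\equiv 1\pmod 8$ and $p\equiv 1\pmod 5$, for $p$ large these fall into exactly seven conjugacy classes: the Borel subgroup $B=C_p\rtimes C_{(p-1)/2}$, the two dihedral groups $D_{p-1}$ and $D_{p+1}$, two classes of $S_4$, and two classes of $A_5$ (so $A_4$ is not maximal).

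Next I would determine, for each conjugacy class of elements, the \emph{signature}: the set of classes of maximal subgroups meeting it (this is precisely the datum encoded by $M^\ast$). The congruences do the essential work: $p\equiv 2\pmod 3$ gives $3\mid p+1$, so order-$3$ elements are nonsplit (among $B,D_{p-1},D_{p+1}$ they meet only $D_{p+1}$), while $4,5\mid p-1$ makes order-$4$ and order-$5$ elements split (meeting $B$ and $D_{p-1}$); these three orders account for all noncentral elements meeting $S_4$ or $A_5$. I would then pin down the fusion of these small classes into the two classes each of $S_4$ and $A_5$ using the diagonal automorphism of $\pgl(2,p)$, which centralises the split torus and interchanges the two $S_4$-classes and the two $A_5$-classes: this forces the order-$4$ class to meet both $S_4$-classes and the order-$5$ elements to meet both $A_5$-classes, the two order-$5$ conjugacy classes sharing one common signature. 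The upshot is that, among the six distinct non-universal signatures (unipotent, generic split, generic nonsplit, and the orders $3,4,5$), only the classes $B$ and $D_{p-1}$ lie in three or more of them; every other maximal class lies in at most two.

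For the lower bound $m_I(G)\geq 3$ I would exhibit a concrete minimal invariable generating set of size $3$: an element $x$ of order $4$, an element $y$ of order $5$, and an element $z$ of order $3$ with $\langle z\rangle$ lying in a common $S_4$ with $x$ and in a common $A_5$ with $y$ (possible since the order-$3$ elements form a single class met by every $S_4$ and every $A_5$). Then $\{x,y,z\}$ invariably generates, as $z$ escapes $B$ and $D_{p-1}$, $y$ escapes $D_{p+1}$ and both $S_4$'s, and $x$ escapes $D_{p+1}$ and both $A_5$'s; yet each two-element subset is covered by one maximal subgroup, namely $\{x,y\}$ by $B$, $\{x,z\}$ by $S_4$, and $\{y,z\}$ by $A_5$, so the set is minimal. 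For the upper bound I would refine Lemma~\ref{iota} into a statement about a \emph{critical family of signatures}: since removing an element whose signature repeats, or whose signature is universal, leaves the set invariably generating, a minimal invariable generating set corresponds to distinct non-universal classes whose signatures intersect emptily while every proper subcollection intersects nonemptily. The maximal classes witnessing criticality are distinct, and each lies in at least $t-1$ of the $t$ chosen signatures; as only $B$ and $D_{p-1}$ lie in three or more signatures, a family of size $t\geq 4$ would require four distinct witnesses drawn from $\{B,D_{p-1}\}$, which is impossible. Hence $m_I(G)\leq 3$, and with $m(\psl(2,p))=3$ we conclude $m_I(G)=m(G)=3$.

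The main obstacle is the precise determination of the signatures, specifically the fusion of the order-$3,4,5$ classes into the two conjugacy classes each of $S_4$ and $A_5$. If, for instance, the two order-$5$ classes had distinct signatures each meeting a single $A_5$-class, an $A_5$-class could end up lying in three signatures and produce a spurious critical family of size $4$, forcing $m_I(G)=4$; it is exactly the symmetric fusion coming from the diagonal automorphism (and the congruences guaranteeing the three small orders behave as split/nonsplit as required) that rules this out. This incidence bookkeeping, rather than any single hard estimate, is where the proof must be handled with care.
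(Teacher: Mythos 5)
Your proposal is correct and follows essentially the same route as the paper: take $m(\psl(2,p))=3$ from the literature, get the lower bound from the order-$\{3,4,5\}$ triple whose pairs can be conjugated into a common maximal subgroup ($S_4$, $A_5$, $B$ respectively), and get the upper bound from the incidence machinery of Section~\ref{alto} via Lemma~\ref{iota}. Your ``signature'' counting is just the dual phrasing of the paper's computation that $\iota(G_p)\leq 3$ (the paper notes $D_{p-1}^*\subseteq B^*$ and that $B^*\cap D_{p+1}^*$ is the universal class of involutions), and your explicit treatment of the fusion of the two classes of $S_4$ and of $A_5$ under the diagonal automorphism makes precise a point the paper leaves implicit.
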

Notice that there exist infinitely many primes $p$ satisfying the conditions in the statement. Indeed, every prime $p \equiv 41$ mod $120$ satisfies them, and there exist infinitely many such primes by Dirichlet's theorem on arithmetic progressions. We remark that, with analogous proof, the statement holds also for $p \equiv -1$ mod $40$ and $p \equiv 1$ mod $3$.
We proceed with the proof of the proposition.

\begin{proof}
In \cite{SJ} it was shown that, for $p > 31$, $m(\psl(2,p))=3$, hence it remains to prove $m_I(\psl(2,p))=3$. Let $G_p=\psl(2,p)$. The subgroup structure of this group is well known, and we refer the reader to \cite[Chapter 3, Section 6]{Suzu} for detailed information. In particular, the condition $p \equiv 1$ mod $40$ implies that the isomorphism classes of maximal subgroups of $G_p$ are exactly the following: dihedral groups $D_{p-1}$ and $D_{p+1}$ of order $p-1$ and $p+1$, a Borel subgroup $B$ of order $p(p-1)/2$, $H=\alt(5)$ and $K=\sym(4)$.

Consider $X=\{x,y,z\}$ where $|x|=3$, $|y|=4$ and $|z|=5$. No proper subgroup of $G_p$ contains elements of order $3$, $4$ and $5$, hence $X$ is an invariable generating set (the conditions on $p$ imply that, while $B$ and $D_{p-1}$ contain elements of order $4$ and $5$, they do not contain elements of order $3$). Moreover, in $G_p$ every element of order coprime to $p$ can be conjugate inside a fixed dihedral group, hence whenever $|a|=|b| \geqslant 3$ and $\text{gcd}(p,|a|)=1$, we have $a^{G_p} \cap \gen b \neq \emptyset$.
Then, order considerations imply that any two elements of $X$ can be conjugate inside a suitable maximal subgroup of $G_p$. This shows that $m_I(G_p) \geqslant 3$.

For the other inequality, we employ the notation used in Section \ref{alto}. We will show $\iota(G_p) \leq 3$, so that $m_I(G_p) \leq 3$ by Lemma \ref{iota}. All subgroups isomorphic to $B$ are conjugate, and all involutions are conjugate, hence $\mathcal M(G_p)$ consists of $D_{p-1}^*, D_{p+1}^*, B^*, H^*, K^*$. We have that $B^* \cap D_{p+1}^*=D_{p-1}^* \cap D_{p+1}^*$ is the conjugacy class of involutions, which belongs to every member of the list. Moreover, $D_{p-1}^* \subseteq B^*$. This easily implies $\iota(G_p) \leq 3$.
\end{proof}

\section{The Tarski irredundant basis theorem}\label{squattro}

A nice result in universal algebra, due to Tarski and known with the name of  Tarski irredundant basis theorem (see \cite{Ta}, or \cite[Theorem 4.4]{bs}), implies that, for every positive integer $k$ with $d(G)\leq k\leq m(G)$, $G$ contains a minimal generating set of cardinality $k.$ A natural question is whether there exists a similar result for the invariable generation. Tarski's theorem relies on an elementary
but clever counting argument which is quite flexible and can be adapted to several different situations. However, as we shall see in this section, using this argument we are able to obtain only a weak and partial result. In order to see the problems in applying Tarski irredundant basis theorem to the invariable generation, we find it is interesting to sketch the proof of this partial result.

Tarski's theorem is based on the notion of closure operator (\cite[Definition 5.1]{bs}), which is a function $C$, from and to subsets of $G$, such that $X\subseteq C(X)$, $C(Y)\subseteq C(X)$ if $Y\subseteq X$, and $C(C(X)) = C(X)$. In case of generation, one defines $\mathcal C(X) = \gen X$. For the argument, it is important that $C(X)=G$ if and only if $X$ generates $G$ (this is obviously true in the case when we define $C(X)=\langle X\rangle).$ We should have this property also in the case of invariable generation. If $X = \{x_1, \ldots, x_t\}$, the first definition that comes to mind is then

\[C(X)=X \cup \left(\bigcap_{(g_1,\dots,g_t)\in G^t}\langle x_1^{g_1},\dots,x_t^{g_t}\rangle\right).\]

Artificially, we have imposed $X \subseteq C(X)$, and monotonicity is immediate. What is not immediate from the definition, but straightforward to check, is that $C$ is also idempotent. Moreover, it is not difficult to show that $C(X) = G$ if and only if $\gen X_I = G$. Therefore we have a closure operator, and we may be on the right track.

Now if we define, for $n,k \geq 1$,
\[C_n(X)=\bigcup_{Y\subseteq X, |Y|\leq n}C(Y),\quad  C_n^1(X)=C_n(X), \quad C_n^{k+1}(X)=C_n(C_n^k(X)),\]
following \cite{bs} we may call a finite group $G$ invariable $n$-ary if $C(X)=\bigcup_{i\in \mathbb N}C_n^i(X)$ for every subset $X$ of $G.$ Using this notion, it is possible to bound the \lq gap' that can occur between minimal invariable generating sets. More precisely, if we denote by $\irb_I(G)$ the set of the positive integers $n$ such that $G$ has a minimal invariable generating set of size $n$, we have the following
\begin{thm}
\label{nario}
	Let $G$ be an invariable $n$-ary finite group, with $n\geq 2.$
	If $i<j$ with $i,j \in \irb_I(G)$ such that $\{i+1,\dots,j-1\}\cap \irb_I(G)=\emptyset,$ then $j-i\leq n-1.$
\end{thm}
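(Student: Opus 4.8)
The plan is to transport Tarski's counting argument into the closure-theoretic setting just set up. By the preceding discussion we have a closure operator $C$ on the subsets of $G$ (extensive by fiat, monotone immediately, and idempotent — the last point being exactly where normality in $G$ of the subgroup intersected in the definition of $C$ is used) with the crucial property that $C(X)=G$ if and only if $\langle X\rangle_I=G$. Consequently the minimal invariable generating sets of $G$ are precisely the irredundant bases of the closure system $(G,C)$: a set $X$ with $C(X)=G$ and $C(X\setminus\{x\})\neq G$ for all $x\in X$. Thus $\irb_I(G)$ is exactly the irredundant-basis spectrum of this system. Since Tarski's theorem is a statement about abstract $n$-ary closure systems, and since $G$ is assumed invariable $n$-ary, i.e. $C(X)=\bigcup_{k\in\mathbb{N}}C_n^k(X)$ for all $X$, the hypotheses of \cite[Theorem 4.4]{bs} are met, and I would run its proof essentially verbatim. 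Let me indicate the shape of that argument.

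The engine is a single controlled \emph{descent} step: if $W$ is a minimal invariable generating set with $|W|=j$ and $G$ admits some invariable generating set of size $<j$ (which holds here, since $i\in\irb_I(G)$ with $i<j$ provides one), then $G$ has a minimal invariable generating set $W'$ with $j-n+1\le |W'|\le j-1$. The lower bound is exactly where $n$-arity is used, and is the content of Tarski's counting argument: one builds the closure of a small generating set through the operators $C_n^k$, and since each application of $C_n$ inspects only subsets of size at most $n$, the cardinality of an irredundant generating subset cannot drop by more than $n-1$ before the next irredundant basis is reached. Granting this step, the theorem follows formally. Applying it to $W$ of size $j$ produces $W'\in\irb_I(G)$ with $j-n+1\le|W'|\le j-1$; since $|W'|<j$ and no integer strictly between $i$ and $j$ lies in $\irb_I(G)$, we are forced to have $|W'|\le i$, whence $j-n+1\le i$, that is $j-i\le n-1$.

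The main obstacle is not in the deduction above — once the closure operator is in place the argument is purely formal — but in the two checks that make the translation legitimate: first, that $C$ really is a closure operator satisfying $C(X)=G\iff\langle X\rangle_I=G$ (both resting on the normality observation recalled above, the only place where genuine group theory, rather than the combinatorics of closure systems, enters); and second, that Tarski's proof invokes nothing beyond extensivity, monotonicity, idempotence and the $n$-ary identity, so that it transfers unchanged. The genuine weakness of the result, and the reason it is only partial, is precisely the standing hypothesis that $G$ be invariable $n$-ary. For ordinary generation the binary group multiplication makes every group $2$-ary, so Tarski's theorem applies with $n=2$ and yields that every integer $k$ with $d(G)\le k\le m(G)$ occurs as the size of a minimal generating set. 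In the invariable setting, however, a group need not be invariable $2$-ary, so we cannot take $n=2$ and recover the clean interval conclusion; establishing such an $n$-ary identity for a useful value of $n$, or bypassing it altogether, is what remains open.
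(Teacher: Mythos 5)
Your proposal is correct and follows essentially the same route as the paper, which proves Theorem \ref{nario} simply by invoking the proof of \cite[Theorem 4.4]{bs} for the closure operator $C$ set up in that section, the invariable $n$-ary hypothesis being exactly what makes Tarski's counting argument go through. Your additional remarks --- that the only group-theoretic input is the verification that $C$ is a closure operator detecting invariable generation, and that the real limitation is the absence of a useful $n$-ary identity (in contrast with ordinary generation, where every group is $2$-ary) --- match the discussion surrounding the theorem in the paper.
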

\begin{proof}
	Follows from the proof of \cite[Theorem 4.4]{bs}.
\end{proof}

\begin{cor}
	If $G$ is an invariable 2-ary finite group then, for every $d_I(G)\leq k \leq m_I(G),$ there exists a minimal invariable generating set of size $k.$
\end{cor}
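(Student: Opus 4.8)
The plan is to derive the corollary directly from Theorem \ref{nario}, specialised to $n=2$. First I would invoke Theorem \ref{nario} with $n=2$: it asserts that whenever $i<j$ both lie in $\irb_I(G)$ and no integer strictly between them belongs to $\irb_I(G)$, then $j-i\leq n-1=1$. Since $i$ and $j$ are integers with $i<j$, this forces $j=i+1$. In other words, consecutive elements of $\irb_I(G)$ differ by exactly $1$, so that $\irb_I(G)$ contains no gaps and is an interval of consecutive integers.

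Next I would recall that, directly from the definitions, $d_I(G)=\min \irb_I(G)$ and $m_I(G)=\max \irb_I(G)$. Combining this with the gap-free conclusion above gives $\irb_I(G)=\{d_I(G), d_I(G)+1,\dots,m_I(G)\}$. Hence for every $k$ with $d_I(G)\leq k\leq m_I(G)$ we have $k\in\irb_I(G)$, which is precisely the statement that $G$ admits a minimal invariable generating set of size $k$.

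I expect no real obstacle in the argument itself: once Theorem \ref{nario} is available, the corollary is a purely formal consequence of the inequality $j-i\leq n-1$ evaluated at $n=2$. The genuine difficulty sits upstream, in the hypothesis that $G$ be invariable $2$-ary. As the surrounding discussion emphasises, this is a delicate and restrictive condition that need not hold for an arbitrary finite group, which is exactly why the full analogue of Tarski's irredundant basis theorem for invariable generation remains open outside this class.
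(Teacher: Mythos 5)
Your proposal is correct and is exactly the argument the paper intends: the corollary is stated as an immediate consequence of Theorem \ref{nario} with $n=2$, which forces consecutive elements of $\irb_I(G)$ to differ by $1$, so that $\irb_I(G)$ is the full interval $\{d_I(G),\dots,m_I(G)\}$. Nothing is missing.
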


Notice that a finite group $G$ is invariable 2-ary if the following holds: for every $X\subseteq G,$ if $C_2(X)=X$ then $C(X)=X.$

We see some problems in this approach. The first is that, although Theorem \ref{nario} does give a bound, we are not able to give a structural interpretation of the property of being invariable $n$-ary. Moreover, in case of nilpotent groups the closure operators defined for generation and for invariable generation need not coincide (remember that, instead, the notions of generation and invariable generation do coincide). Finally, the following result shows that Theorem \ref{nario} cannot give any absolute bound.

\begin{lemma} For every integer $n \geqslant 2$, there exists a finite group $G$ which is not invariable $n$-ary.
	\end{lemma}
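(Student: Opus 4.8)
The plan is to construct, for each $n \geq 2$, an explicit finite group $G$ witnessing failure of the invariable $n$-ary property. By the remark preceding the statement, $G$ fails to be invariable $n$-ary as soon as we can produce a subset $X \subseteq G$ with $C_n(X) = X$ but $C(X) \neq X$; equivalently, $X$ is not invariably closed, yet every subset $Y \subseteq X$ of size at most $n$ already satisfies $C(Y) = Y$ (so that passing to subsets of size $\leq n$ and closing gains nothing). The cleanest way to force this is to arrange a set $X$ of size $n+1$ that \emph{does} invariably generate $G$ (so $C(X) = G \neq X$), while no proper subset of $X$ invariably generates $G$ and, more strongly, for every $Y \subsetneq X$ the closure $C(Y)$ adds nothing to $Y$. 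In other words, I would seek a group $G$ admitting a minimal invariable generating set $X$ of size exactly $n+1$ with the extra feature that every proper subset $Y$ is ``invariably closed'', meaning $\bigcap_{\mathbf g}\langle y_1^{g_1},\dots\rangle \subseteq Y$.

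The natural source of such examples is the symmetric or alternating groups, where minimal invariable generating sets can be made large and where the intersection defining $C$ is controlled by fixed-point/support combinatorics. Indeed, the $\alt(29)$ computation earlier in the paper already exhibits a size-$3$ minimal invariable generating set built from carefully chosen cycle types; the same philosophy scales. So my first step is to exhibit, for each $n$, a set $X = \{x_0, x_1, \dots, x_n\}$ in some $\sym(m)$ or $\alt(m)$ (with $m$ growing with $n$) such that $\langle X \rangle_I = G$ but every proper subset stabilizes a partition or fixes a point, hence generates (invariably) only a proper subgroup, and moreover the \emph{invariable} core $\bigcap_{\mathbf g}\langle y_i^{g_i}\rangle$ of any proper subset is trivial or contained in $Y$. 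Concretely, I would choose the $x_i$ to have pairwise ``incompatible'' supports so that removing any single element destroys invariable generation; the large-support long cycles guarantee that no nontrivial normal subgroup is forced into every conjugated subgroup, making $C(Y) = Y$ for proper $Y$.

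The verification then splits into two tasks. First, check $C_n(X) = X$: since every $Y \subseteq X$ with $|Y| \leq n$ is a proper subset, it suffices that $C(Y) = Y$ for all such $Y$, i.e. that the invariable core of each proper subset is contained in that subset. Because the relevant groups are non-abelian simple (or nearly so) and the conjugated subgroups $\langle y_1^{g_1}, \dots \rangle$ are proper, their intersection over all conjugates is a \emph{normal} subgroup contained in a proper subgroup, hence trivial; this is exactly where simplicity pays off, yielding $C(Y) = Y \cup \{1\} = Y$ (adjusting so that $1 \in X$ or handling the identity separately). Second, check $C(X) \neq X$, which is immediate once $\langle X \rangle_I = G$ and $|G| > |X|$, since then $C(X) = G$.

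The main obstacle will be the simultaneous bookkeeping: I must guarantee that $X$ as a whole invariably generates $G$ while \emph{every} proper subset fails to, and additionally that the invariable \emph{core} (not merely the invariable closure's failure to be everything) of each proper subset is contained in the subset. The first two conditions are the familiar ``minimal invariable generating set'' requirement and can be met by the support-incompatibility trick of the $\alt(29)$ lemma; the genuinely new and delicate point is controlling the core, i.e. ensuring $\bigcap_{\mathbf g}\langle y_i^{g_i}\rangle \subseteq Y$. In the simple-group setting this core is a proper normal subgroup and hence trivial, so the difficulty is really to pick $m$ and the cycle types so that (a) $G$ is simple, (b) $|X| = n+1$, and (c) the removal of any one $x_i$ leaves a set whose conjugates always land in a common proper subgroup. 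I expect the construction to require $m$ roughly linear in $n$, with the $x_i$ supported on overlapping blocks arranged so that each omission exposes an invariant partition, and I would present one concrete parametrized family rather than an abstract existence argument.
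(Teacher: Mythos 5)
Your overall strategy is sound as far as it goes: if $G$ is a nonabelian simple group and $X$ is a minimal invariable generating set of size at least $n+1$, then for every $Y\subseteq X$ with $|Y|\leq n$ the core $\bigcap_{\mathbf g}\langle y_1^{g_1},\dots\rangle$ is a proper normal subgroup, hence trivial, so $C_n^i(X)=X\cup\{1\}$ for all $i$ while $C(X)=G$; the identity-element bookkeeping you flag is harmless. The problem is that the entire content of the lemma then lies in the ingredient you defer to the end and never supply: an explicit finite simple group, for each $n$, admitting a minimal invariable generating set of size at least $n+1$. This is not a routine extension of the $\alt(29)$ example. Nothing in the paper (or in the literature it cites) establishes that $m_I(\alt(m))\to\infty$, and since $d_I(G)=2$ for every nonabelian finite simple group, you cannot fall back on making $d_I$ large either; you genuinely need large \emph{maximal} irredundant invariable generating sets in simple groups, and producing a parametrized family with the required ``each omission exposes an invariant partition'' property is an open-ended combinatorial task that your proposal only sketches. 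Note also that substituting a non-simple group such as $\alt(5)^n$ (where large minimal invariable generating sets \emph{are} available) breaks the argument, because the core of a proper subset can then be a nontrivial normal sub-product not contained in $X$, so $C_n$ may grow.

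The paper avoids all of this with a much cheaper choice on both counts. It takes $X=G\setminus\{g\}$ for a single element $g$ lying in no proper normal subgroup: such an $X$ automatically invariably generates $G$ (a union of conjugates of a proper subgroup misses at least two elements), so $C(X)=G\neq X$, while the hypothesis $d_I(G)\geq n+1$ forces every $n$-subset to have a core equal to a proper normal subgroup, hence avoiding $g$, giving $C_n(X)=X$. The witnessing group is then elementary: $G=C_p^n\rtimes C_q$ with $C_q$ acting by scalar multiplication on each factor has $d_I(G)=n+1$ and all proper normal subgroups inside $C_p^n$. So the two approaches differ in both the shape of $X$ (complement of a point versus a minimal invariable generating set) and the class of groups (supersoluble versus simple); the paper's route trades your structural cleanliness for a construction whose verification is a few lines. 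To salvage your version you would need to actually exhibit the simple-group family, which I expect is considerably harder than the lemma itself.
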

\begin{proof} Let $n \geqslant 2$. Assume we prove that there exists a finite group $G$ with the following property: $d_I(G) \geqslant n+1$ and there exists $g \in G$ that does not lie in any proper normal subgroup of $G$. Then, if we set $X=G \setminus \{g\}$, we have that $\gen X_I=G$ (since $|G \setminus \cup_{g\in G}M^g| \geqslant |M|$ for every proper subgroup $M$ of $G$). Hence $C(X)=G \nsubseteq X$. On the other hand, for every $x_1, \ldots, x_n \in G$, $N:=\bigcap_{g_i \in G}\gen{x_1^{g_1}, \ldots, x_n^{g_n}}$ is a proper normal subgroup of $G$, hence $g \notin N$. This shows that $C_n(X) \subseteq X$, from which it follows that $G$ is not invariable $n$-ary.
	
We are left to exhibit a group with the property described above. For a supersoluble example, consider $G=P \rtimes Q$, where $P \cong C_p^n$ and $Q \cong C_q$ for primes $p$ and $q$, with $q$ dividing $p-1$, and $Q$ acts on each copy of $C_p$ as multiplication in the field $\mathbb{F}_p$. It can be easily seen
that $d_I(G)=n+1$. Moreover every proper normal subgroup of $G$ is contained in $P$, so we can take in the role of $g$ any element of $G\setminus P.$
	\end{proof}

Summarising, in order to apply Tarski irredundant basis theorem to the invariable generation we would need to define a closure operator $C$ on the set of subsets of $G$ with the following two properties:
\begin{enumerate}
	\item	$C(X)=G$ if and only if $\langle X \rangle_I=G,$
	\item $C(X)=\bigcup_{i\in \mathbb N}C_2^i(X)$ for every subset $X$ of $G.$
\end{enumerate}
We are not able to find a closure operation satisfying (1) different from the one introduced above. However this fails property (2). So the following question remains open.
\begin{question}
Let $G$ be a finite group. Is it true that for every $d_I(G)\leq k\leq m_I(G),$ there exists a minimal invariable generating set of $G$ of cardinality $k?$
\end{question}

We are able to prove that the answer is affirmative in the particular case of finite soluble groups.

\begin{thm}\label{tacchino}
	Let $G$ be a finite soluble group and let $m=m(G).$ If $\{x_1,\dots,x_t\}$ is a minimal invariable generating set of $G,$ with $t<m,$ then there exists a minimal invariable generating set of $G$ of cardinality $t+1.$
\end{thm}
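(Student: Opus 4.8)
The plan is to argue by induction on $|G|$, following the architecture of the proof of Proposition \ref{mmi} but with the crucial difference that we must \emph{construct} a new minimal invariable generating set of size $t+1$, rather than merely bound sizes. As in Proposition \ref{mmi}, we reduce to the case where $G$ is soluble but not nilpotent (the nilpotent case reduces to ordinary generation, where Tarski's theorem applies), we assume $\frat G = 1$, and we select a nontrivial $A \in \mathcal V_G$ with $R = R_G(A)$, $U$, $C = C_G(A)$ as in Lemma \ref{corona}, writing $U \cong_G A^\delta$ so that $m = m(G/U) + \delta$. We reorder so that $x_1,\dots,x_s$ is a minimal invariable generating set of $G/U$, with $s \leq m(G/U)$. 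A technical refinement that I expect to be essential here is to choose a complement $H$ of $U$ in $G$ with $R \leq H$, and to write $x_i = w_i h_i$ with $w_i \in U$ and $h_i \in H$; this lets us work with honest subspace complements rather than quotient spaces. Concretely, for each $i$ I would fix a decomposition $A = A_i \oplus C_i$ where $A_i = [h_i, A]$ (here identifying $K = H/R$ and applying Proposition \ref{matrici}/Corollary \ref{corimp} inside $\overline G = G/R \cong A^\delta \rtimes K$), and record the data in a $\delta \times t$ matrix $Z$ over $F = \End_K(A)$ whose columns live in the $C_i$.

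The heart of the argument is a normalization-and-splitting procedure. First I would replace each $x_i$ by a cleaned-up $\tilde x_i$ whose $U$-component is projected into $C_i$; using Lemma \ref{ReU} together with Proposition \ref{matrici} one checks that $\{\tilde x_1,\dots,\tilde x_t\}$ is again a minimal invariable generating set, with the linear-independence data of $Z$ preserved. Next, among the indices $i \leq s$ I would select a subset $J$ minimal subject to the rows of the corresponding submatrix (keeping also the columns $s+1,\dots,t$) being linearly independent; for $j \in \{1,\dots,s\}\setminus J$ one may zero out the $U$-component, replacing $\tilde x_j$ by $h_j$, and still retain minimality. The key dichotomy is then: if $J \neq \emptyset$, pick $i_1 \in J$ and simply split $\tilde x_{i_1} = \tilde w_{i_1} h_{i_1}$ into the \emph{two} elements $\tilde w_{i_1}$ and $h_{i_1}$; minimality of $J$ guarantees that neither piece is redundant, yielding a minimal invariable generating set of size $t+1$.

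The remaining case $J = \emptyset$ is where I expect the main obstacle to lie, and it requires a genuinely different idea. Here $\{\tilde x_1,\dots,\tilde x_s\}$ is a minimal invariable generating set of $H$ and $\{\tilde x_{s+1},\dots,\tilde x_t\}$ generates $U \cong A^\delta$ as an $H$-module, with the $U$-components encoded by a block matrix $X$ over $F$ whose rows are independent but become dependent upon deleting any single column. I would extract the rank increments $n_k = \ran X_k - \ran X_{k-1}$ (so $\sum_k n_k = \delta$ and each $n_k \leq n := \dim_F A$), and replace the generators by standard-basis-type module generators $\tilde y_{s+1},\dots,\tilde y_t$ read off from a canonical block matrix $Y$; this preserves minimality. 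Now if some $n_k > 1$, the corresponding generator $\tilde y_{s+k}$, whose support is a block of size $n_k$, can be split into two module generators (supports of sizes $n_k - 1$ and $1$), producing size $t+1$. If instead every $n_k = 1$, then $t - s = \delta$, so $t < m = m(H) + \delta$ forces $s < m(H)$, and \textbf{induction} applied to $H$ supplies a minimal invariable generating set of $H$ of size $s+1$; appending the module generators $\tilde x_{s+1},\dots,\tilde x_t$ gives the desired set of size $t+1$. The delicate point throughout is to verify at each substitution that minimality is not lost on either side of the crown — i.e. that dependence persists modulo $U$ for the reordered indices $\leq s$ and persists modulo $R$ (via the rank conditions on the matrices) for the indices $> s$ — and this bookkeeping, rather than any single inequality, is the real work.
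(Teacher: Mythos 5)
Your proposal is correct and follows essentially the same route as the paper's own proof: the same reduction via a crown with $R\leq H$ a complement of $U$, the same normalization of the $U$-components into the subspaces $C_i$, the same minimal subset $J$ with the split/zero-out dichotomy, the same rank-increment analysis with the standard-basis matrix $Y$, and the same final appeal to induction on $H$ when all increments equal $1$. There is nothing substantive to add.
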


\begin{proof}  The beginning of the proof is very similar to that of Proposition \ref{mmi}. As we did there, we prove our statement by induction on $|G|$. We may assume that $G$ is soluble but not nilpotent, the statement for nilpotent groups being easy to check (without applying Tarski's theorem). Again we may assume $\frat G=1,$ and we choose a nontrivial $G$-module $A\in \mathcal V_G$ such that $R=R_G(A), U, C=C_G(A)$ satisfy
	the property described in Lemma \ref{corona}. We let $\delta$ be such that $U\cong_G A^\delta$. By \cite[Theorem 2]{min}, $m=m(G/U)+\delta.$ Up to reordering the indices, we may choose $s\leq t$ such that $x_1,\dots,x_s$ is a minimal invariable generating set of $G$ modulo $U.$

In addition to what done in the proof of Proposition \ref{mmi}, we further choose, as we may, a complement $H$ of $U$ in $G$ with $R\leq H.$ For $1\leq i\leq t,$ we write $x_i=w_ih_i$ with $w_i\in U$ and $h_i\in H.$
	
	We work in $\overline G=G/R$ and, for every $g\in G$, we set $\overline g=gR.$  We may identify $\overline G$ with the semidirect product $A^\delta\rtimes K$ with $K= H/R.$ Since $\langle x_1U,\dots,x_sU\rangle_I=G/U$ and $K\cong G/C$ is an epimorphic image of $G/U,$ we deduce that $\langle \overline x_1,\dots,\overline x_s\rangle_I=K.$ 
	
	As in the proof of Proposition \ref{mmi}, we want to apply Proposition \ref{matrici} and Corollary \ref{corimp}, and we employ the notations used there. A technical, but important, step here is that we pick complements for $A_i$ in $A$. Namely, for $1\leq i \leq t$ we decompose $A$ as $A = A_i \oplus C_i$. Here the $C_i$ play the role of the $B_i$ in Proposition \ref{mmi} and Corollary \ref{corimp}, with the advantage (not apparent yet) that they are subspaces of $A$. Then, if we denote by $c_{j,i}$ the projection of $w_{j,i} \in A$ onto $C_i$, the matrix $Z$ of Corollary \ref{corimp} is replaced by
\[Z=\begin{pmatrix} c_{1,1}&\cdots& c_{1,t}\\\vdots& &\vdots\\ c_{\delta,1}&\cdots& c_{\delta,t}
	  \end{pmatrix}\]
Moreover, for $1\leq k\leq t$ the matrices $Z_{\text{rem}(k)}$ and $Z_{\text{kee}(k)}$ of Proposition \ref{mmi} become here
\[Z_{\text{rem}(k)}=\begin{pmatrix} c_{1,1}&\cdots& c_{1,k-1}& c_{1,k+1}&\cdots& c_{1,t}\\\vdots & &\vdots & \vdots & & \vdots\\ c_{\delta,1}&\cdots& c_{\delta,k-1}& c_{\delta,k+1}&\cdots& c_{\delta,t}
	  \end{pmatrix}\]

\[Z_{\text{kee}(k)}= \begin{pmatrix} c_{1,1}&\cdots& c_{1,k}\\\vdots& &\vdots\\ c_{\delta,1}&\cdots& c_{\delta,k}
	  \end{pmatrix}\]
As in Proposition \ref{mmi}, the rows of $Z$ (seen as vectors of $C_1 \times \cdots \times C_t \leq A^t$) are linearly independent, while, for every $s<k\leq t,$ the rows of $Z_{\text{rem}(k)}$ are linearly dependent.
	 
One observation. The definition of $C_i$, hence of the $c_{j,i}$, depend upon the element $h_i$. Then, once the $c_{j,i}$ have been defined, the $h_i$ have somewhat done their work (concerning generation modulo $R$), and we do not need to care about them anymore. Indeed, we only need to care of linear dependence of the rows of $Z$ inside $C_1 \times \cdots \times C_t$ or, equivalently, inside $A^t$. Then also the $C_i$ are not important anymore. This gives the possibility to suitably modify, to \lq clean' in some sense, the elements $x_i$ without affecting their property of invariable generation.

As a first example, if we denote by $\widetilde w_i \in A^{\delta} \cong U$ the $i$-th column of $Z$, we may replace $x_i$ with
             \[\widetilde x_i=\begin{cases}
	  \widetilde w_ih_i&\text{ if $i\leq s$,}\\
	  \widetilde w_i&\text{ otherwise.}
	  \end{cases}\]
It is easy to see that $\{\widetilde x_1,\dots \widetilde x_t\}$ is a minimal invariable generating set of $G.$ Indeed, by the choice of $s$ the set invariably generates modulo $U$ and it is not possible to remove one among the first $s$ elements. On the other hand, all the considerations regarding the invariable generation modulo $R$ are not affected, because they concern linear dependence of the rows of $Z$, and this matrix does not change in passing from $x_i$ to $\widetilde{x_i}$. This implies that $\{\widetilde x_1,\dots \widetilde x_t\}$ invariably generates $G$ minimally.

Now choose a subset $J=\{i_1,\dots,i_u\}$ of $\{1,\dots,s\}$
minimal with respect to the property that the $\delta$ vectors
	  \[(c_{j,i_1},\dots,c_{j,i_u},c_{j,s+1},\dots,c_{j,t})\] for $1 \leq j \leq \delta$ are linearly independent. The arguments applied in the previous paragraph imply that we still obtain a minimal invariable generating set if we replace $\widetilde x_j=\widetilde w_jh_j$ with $h_j$ for every $j\in \{1,\dots, s\}\setminus J.$ So from now on we will assume $\widetilde w_j=0$ for every  $j\in \{1,\dots, s\}\setminus J.$

  If $J\neq \emptyset,$ then we obtain a minimal invariable generating set of size $t+1$ by replacing $\widetilde  x_{i_1}=\widetilde w_{i_1}h_{i_1}$ with the two elements 
	  $\widetilde w_{i_1}$ and $h_{i_1}.$ 
	  
	  So we may assume $J=\emptyset$, from which it follows that the first $s$ columns of $Z$ are zero. For convenience, we may remove from the matrix $Z$ such columns. The rank of the matrix clearly does not change. We call the matrix obtained in this way again $Z$.
\[Z=\begin{pmatrix}c_{1,s+1}&\cdots&c_{1,t}\\\vdots& &\vdots\\ c_{\delta,s+1}&\cdots& c_{\delta,t}
	  \end{pmatrix}\]
For $s < k \leq t$, we remove the first $s$ columns in  $Z_{\text{rem}(k)}$ and $Z_{\text{kee}(k)}$. Again, the rows of $Z$ are linearly independent (i.e. $\ran Z = \delta$), while for $s < k \leq t$ the rows of $Z_{\text{rem}(k)}$ are linearly dependent (i.e. $\ran Z_{\text{rem}(k)} < \delta$).

Now the same argument as in the end of Proposition \ref{mmi} shows that for $s \leq k < t$, $\ran Z_{\text{kee}(k)}<\ran Z_{\text{kee}(k+1)}$. Let 
\[n_1=\ran Z_{\text{kee}(s+1)},\ n_2=\ran Z_{\text{kee}(s+2)}-\ran Z_{\text{kee}(s+1)},\dots, \]
\[
n_{t-s}=\ran Z_{\text{kee}(t)}- \ran Z_{\text{kee}(t-1)}.\]

Notice that $n_1+\cdots+n_{t-s}=\delta$, and $1 \leq n_i\leq n$ for every $i$. Let now $F=\End_H(A)$ and $n=\dim_F A.$ Fixing a basis for $A$ as an $F$-vector space, we may identify each element of $A$ as a vector of $F^n$. Denote by $e_i$ the vector of $F^n$ all of whose entries are 0, expect the $i$-th which is $1$, and consider the block
	   matrix
	   \[Y=\begin{pmatrix}
	   e_1&0&\cdots&0\\
	   \vdots&\vdots& &\vdots\\
	     e_{n_1}&0&\cdots&0\\
	    0&e_1&\cdots&0\\
	    \vdots&\vdots& &\vdots\\
	    0&e_{n_2}&\cdots&0\\
	      \vdots&\vdots& &\vdots\\
	      0&0&\cdots&e_1\\
	       \vdots&\vdots& &\vdots\\
	        0&0&\cdots&e_{t-s}
	   \end{pmatrix}
	   \]
Using the definition of the $e_i$, it is easy to check that we still obtain a minimal invariable generating set if we replace $Z$ with $Y$. More precisely, if we consider the $i$-th column of $Y$ as an element $\widetilde y_{s+i}$ of $U \cong A^{\delta}$, we get that  $\{\widetilde x_1,\dots,\widetilde x_{s},\widetilde y_{s+1} ,\dots,\widetilde y_t\}$ is a minimal invariable generating set of $G.$

Assume first that there exists $i\in \{1,\dots,s-t\}$ with
$n_i> 1.$ Then $\tilde y_{s+i} \in A^{\delta}$ has at least two nonzero entries, and it suffices to split $\tilde y_{s+i}$ in two vectors: if we define
$$\begin{aligned}
\widetilde z_1&=(0,\dots,0,e_1,\dots,e_{n_i-1},0,0,\dots 0),\\
\widetilde z_2&=(0,\dots,0,\ 0\ ,\dots,\ 0\ ,e_{n_i},0,\dots 0),\\
\end{aligned}$$ then the set $\{\widetilde x_1,\dots,\widetilde x_{s},\widetilde y_{s+1} ,\dots,\widetilde y_t\} \cup \{\widetilde z_1,\widetilde z_2\} \setminus \{\widetilde y_{s+i}\}$ is a minimal invariable generating set of size $t+1.$

Assume finally $n_i=1$ for every $i\in \{1,\dots,t-s\}$. In this case $t-s=\delta$. Since $t < m=m(H)+\delta,$ we get $s<m(H).$
Then, by induction, there exists a minimal invariable generating set $\{\widetilde k_1,\dots,\widetilde k_{s+1}\}$ of $H$ of cardinality $s+1$. It follows that $\{\widetilde k_1,\dots,\widetilde k_{s+1},\widetilde x_{s+1},\dots,\widetilde x_t\}$ is a minimal invariable generating set of $G$ of cardinality $t+1.$
	   \end{proof}

\section{The invariable Frattini}\label{scinque}
The Frattini subgroup $\frat G$ of a finite group $G$ is defined as the intersection of all maximal subgroups of $G$. An important feature of this subgroup is that it coincides with the elements of $G$ that are useless in generating $G$. More precisely, $\frat G$ coincides with the set of elements of $G$ that can be dropped from every generating set of $G$ (without compromising generation). This feature implies that the generation properties of $G$ are essentially the same as those of $G / \frat G$. Therefore, if we are interested in generation we can factor out $\frat G$ with no harm. This considerably simplifies the situation, since the structure of Frattini-free groups is much more transparent than that of general groups (at least for soluble groups: think of how many times we applied Lemma \ref{corona} and Lemma \ref{relativo}).

Here we shall define the analogue of the Frattini subgroup from the point of view of the invariable generation. This will allow us to properly state the results of Section \ref{bigr}. For every  subgroup $M$ of $G$, set $\widetilde M=\bigcup_{g\in G}M^g$. Consider the set $\Sigma=\Sigma(G)$ of all maximal members of the set
of all $\widetilde H,$ where $H$ varies among the proper subgroups of $G.$ Set $\frat_I(G)=\bigcap_{\widetilde M\in \Sigma}\widetilde M.$
\begin{lemma}\label{polpo}
	$\frat_I(G)$ coincides with the set of elements of G that can be dropped
	from any invariable generating set.
\end{lemma}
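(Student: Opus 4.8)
The plan is to characterize the droppable elements directly through the covering condition of Lemma~\ref{basico}(i), which says that a subset invariably generates $G$ precisely when it is \emph{not} contained in any $\widetilde M$ with $M$ maximal. Write $D$ for the set of elements that can be dropped from every invariable generating set, so that $x\in D$ means: whenever $\gen{Y\cup\{x\}}_I=G$ one also has $\gen{Y}_I=G$. I would prove $\frat_I(G)=D$ by establishing the two inclusions separately, using throughout the elementary facts that $H\le K$ implies $\widetilde H\subseteq\widetilde K$, and that the poset $\{\widetilde H: H<G\}$, being finite, has the property that every $\widetilde H$ lies below some maximal member, i.e. some element of $\Sigma$.

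For the inclusion $\frat_I(G)\subseteq D$, take $x\in\frat_I(G)$ and an invariable generating set $W\ni x$; the goal is $\gen{W\setminus\{x\}}_I=G$. Arguing by contradiction, if $Y:=W\setminus\{x\}$ fails to invariably generate, Lemma~\ref{basico}(i) yields a maximal subgroup $M_0$ with $Y\subseteq\widetilde{M_0}$. Choosing $\widetilde{M'}\in\Sigma$ with $\widetilde{M_0}\subseteq\widetilde{M'}$, the hypothesis $x\in\frat_I(G)$ gives $x\in\widetilde{M'}$, so $W=Y\cup\{x\}\subseteq\widetilde{M'}$. Finally $M'$ is proper, hence lies in some maximal subgroup $N$, whence $W\subseteq\widetilde{M'}\subseteq\widetilde N$, contradicting $\gen{W}_I=G$. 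Thus $x\in D$.

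For $D\subseteq\frat_I(G)$ I would argue the contrapositive: if $x\notin\frat_I(G)$ then $x\notin D$. Pick $\widetilde M\in\Sigma$ with $x\notin\widetilde M$ and take $Y:=\widetilde M$ itself. Since $M$ lies in some maximal subgroup $N$ we have $Y=\widetilde M\subseteq\widetilde N$, so $\gen{Y}_I\ne G$ by Lemma~\ref{basico}(i); moreover $x\notin Y$. It remains to check $\gen{Y\cup\{x\}}_I=G$: if not, some maximal subgroup $N$ satisfies $\widetilde M\cup\{x\}\subseteq\widetilde N$, and then $\widetilde M\subseteq\widetilde N$ forces $\widetilde M=\widetilde N$ by maximality of $\widetilde M$ in $\Sigma$, giving $x\in\widetilde N=\widetilde M$, a contradiction. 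Hence $Y\cup\{x\}$ is an invariable generating set from which $x$ cannot be dropped, so $x\notin D$.

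The only genuinely delicate point is the interaction between maximal subgroups of $G$ and the maximal members of the poset of the sets $\widetilde H$: a priori an element of $\Sigma$ need not be of the form $\widetilde N$ with $N$ maximal. This is exactly why the definition of $\frat_I(G)$ uses $\Sigma$ rather than the family $\{\widetilde N: N \text{ maximal}\}$, and it is the reason the first inclusion must route through a maximal member $\widetilde{M'}$ before descending to a genuinely maximal subgroup $N$ (so that the hypothesis $x\in\frat_I(G)$ can be invoked on $\widetilde{M'}$). In fact maximality of $\widetilde{M'}$ together with $M'\le N$ forces $\widetilde{M'}=\widetilde N$, so every member of $\Sigma$ does turn out to be $\widetilde N$ for some maximal $N$; but the argument above does not require this refinement.
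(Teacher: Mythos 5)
Your proof is correct and follows essentially the same route as the paper's: both directions are handled exactly as in the published argument, with your version merely spelling out the passage from an arbitrary maximal subgroup to a maximal member of $\Sigma$ (and back) that the paper leaves implicit. Your closing observation that every member of $\Sigma$ is in fact $\widetilde N$ for $N$ a maximal subgroup is also made in the paper, in the paragraph following the lemma.
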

\begin{proof} Assume $x \in \frat_I(G)$ and assume $x \cup X$ invariably generates $G$ for
	some set $X.$ If $X$ does not invariably generate $G$ then $X \subseteq \widetilde M$ for some $\widetilde M \in \Sigma,$ hence $x \cup X \subseteq \widetilde M,$ against the assumption of invariable generation.
	Conversely, assume $x\notin \frat_I(G)$: choose $\widetilde M$ such that $x\notin \widetilde M.$ Then, by
	the maximality of $\widetilde M$ it follows that $\gen{x \cup \widetilde M}_I = G,$ and clearly $x$ cannot be
	omitted from this invariable generating set.
\end{proof}

By the previous lemma, $\frat_I(G)$ plays, for the invariable generation, the same role played by the Frattini subgroup for the usual generation. Unfortunately $\frat_IG$ need not be a subgroup. For instance, if $G=\alt(5)$ then $\frat_I(G)$ is the set of all involutions of $G$ -- hence it generates $G$.

Notice that if $\widetilde K\in \Sigma (G)$, then  $K$ is a maximal subgroup of $G$ and clearly if $M$ is a maximal subgroup of $G,$ then there exists a maximal subgroup $K$ of $G$ such that $\widetilde K\in \Sigma(G)$ and
$M\subseteq \widetilde M\subseteq \widetilde K,$ hence, by definition, $\frat G \subseteq \frat_I G$. This, if we want, is the reason why we can factor out $\frat G$ also in the invariable setting.

Notice that $\frat_I G$ is defined in a strange manner. Indeed, we do not intersect the $\widetilde M$'s for $M$ running among all maximal subgroups of $G$; we take instead only the maximal sets among the $\widetilde M$'s. This is important for the proof of Lemma \ref{polpo}. However, we do not know whether this is really necessary, and we propose the following

\begin{question}
\label{faina}
For a finite group $G$, does $\frat_I G$ coincide with the intersection of all $\widetilde M$, where $M$ runs among all maximal subgroups of $G$?
\end{question}

What we do know is that the two concepts are different a priori, meaning that there may exist maximal subgroups $M_1$ and $M_2$ such that $\widetilde{M_1}$ is properly contained in 
$\widetilde{M_2}.$ For example in $G=\alt(6)$ one can consider $M_1\cong \perm(4)$ and $M_2\cong 3^2:4 \cong (\sym(3) \wr \sym(2)) \cap \alt(6)$. Then $\widetilde M_2$ is the set of the elements of $G$ of order different from 5, while $\widetilde M_1$ does not contain elements of order 5 and moreover contains only one of the two conjugacy classes of elements of order $3$. Hence $\widetilde{M_1} \subsetneq \widetilde{M_2}$. Nevertheless, once again this phenomenon cannot occur in the soluble world.
\begin{lemma}
	Assume that $G$ is a finite soluble group and let $M_1, M_2$ be two maximal subgroups of $G$. If $\widetilde  M_1\subseteq \widetilde M_2,$ then $\widetilde  M_1 = \widetilde M_2.$ 
\end{lemma}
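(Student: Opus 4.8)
The plan is to compute $|\widetilde M|$ for a maximal subgroup $M$ of the soluble group $G$ in a way that makes its dependence on $M$ transparent, and then to show that the hypothesis $\widetilde M_1\subseteq\widetilde M_2$ forces $M_1$ and $M_2$ to produce \emph{the same} datum, so that $|\widetilde M_1|=|\widetilde M_2|$ and the containment collapses to an equality. First I would record the reformulation: $g\in\widetilde M$ if and only if $g$ has a fixed point in the transitive action of $G$ on $\Omega_M=G/M$. Since $G$ is soluble and $M$ is maximal, $K_M:=\Core_G(M)$ is a maximal normal subgroup with $\overline G:=G/K_M$ primitive; thus $\overline G=V_M\rtimes H_M$, where $V_M=\soc(\overline G)$ is the unique minimal normal subgroup (elementary abelian, self-centralising, of order $|G:M|=p^{a}$), $H_M\cong G/C_G(A_M)$ acts faithfully and irreducibly, and $A_M$ denotes $V_M$ viewed as a $G$-module. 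Regarding $\overline G\leq\AGL(V_M)$ with $V_M$ the translation group, the elements with a prescribed linear part $h\in H_M$ form a coset of $V_M$, and exactly those whose translation part lies in $(1-h)V_M$ fix a point; summing over $h$ and lifting through $K_M$ gives
\[ |\widetilde M|=|K_M|\cdot\sum_{h\in H_M}|(1-h)V_M|. \]
The crucial feature is that this depends only on $|K_M|$ and on the $G$-module $A_M$ (not on the choice of complement). I would also record $\widetilde M\cap C_G(A_M)=K_M$, which is immediate since an element of $C_G(A_M)$ maps to a translation $(v,1)$ of $\overline G$, and $(v,1)$ fixes a point precisely when $v=0$.

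The heart of the argument is as follows. Assume $\widetilde M_1\subseteq\widetilde M_2$ and write $C_i=C_G(A_i)$, $K_i=K_{M_i}$. Passing to complements, every element fixed-point-free on $\Omega_2$ is fixed-point-free on $\Omega_1$; together with $\widetilde M_2\cap C_2=K_2$ this gives $C_2\cap\widetilde M_1\subseteq K_2$, and in particular $C_2\cap K_1\subseteq K_2$ (as $K_1\subseteq\widetilde M_1$). Now consider the image $\overline{C_2}$ of $C_2$ in $G/K_1$: it is a normal subgroup of the primitive group $G/K_1$, and it is nontrivial (otherwise $C_2=C_2\cap K_1\subseteq K_2\subsetneq C_2$), hence contains the unique minimal normal subgroup $V_1$; since $V_1$ acts regularly on $\Omega_1$, it follows that $C_2$ is transitive on $\Omega_1$. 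Applying the Cauchy--Frobenius lemma twice, transitivity gives $\sum_{c\in C_2}\mathrm{fix}_{\Omega_1}(c)=|C_2|$, while $C_2\cap\widetilde M_1\subseteq K_2$ means only elements of $K_2$ contribute, so $|C_2|=|K_2|\cdot(\#\,K_2\text{-orbits on }\Omega_1)$, i.e. the number of $K_2$-orbits on $\Omega_1$ is $|C_2:K_2|=|V_2|=p_2^{a_2}>1$. On the other hand the image of $K_2$ in $G/K_1$ is normal in a primitive group, hence trivial or containing $V_1$; the latter would make $K_2$ transitive on $\Omega_1$ (one orbit), contradicting the count. Therefore $K_2\subseteq K_1$, every $K_2$-orbit on $\Omega_1$ is a singleton, and the count forces $p_1^{a_1}=|\Omega_1|=p_2^{a_2}$. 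This is the step I expect to be the main obstacle: it is exactly here that solubility (through the structure of primitive soluble groups and the self-centralising socle) is used to turn a one-sided containment into an equality of indices.

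Finally I would clean up. From $K_2\subseteq K_1$ and $C_2\cap K_1\subseteq K_2$ we obtain $C_2\cap K_1=K_2$, so $\overline{C_2}=C_2/K_2\cong V_2$ has order $|V_2|=|V_1|$; being a subgroup of $G/K_1$ containing $V_1$, it equals $V_1$. The induced $G$-equivariant isomorphism $A_2=C_2/K_2\xrightarrow{\ \sim\ }V_1=A_1$ shows $A_1\cong A_2$ as $G$-modules, whence $C_1=C_2$ and $|K_1|=|C_1|/|A_1|=|C_2|/|A_2|=|K_2|$; with $K_2\subseteq K_1$ this yields $K_1=K_2$. Thus $M_1$ and $M_2$ have equal cores and isomorphic associated modules, so the displayed formula gives $|\widetilde M_1|=|\widetilde M_2|$. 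Combined with $\widetilde M_1\subseteq\widetilde M_2$, this forces $\widetilde M_1=\widetilde M_2$, as required.
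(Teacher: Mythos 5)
Your proof is correct, and it takes a genuinely different route from the paper's. The paper argues by induction on $|G|$: after reducing to $\frat(G)=1$ it fixes a crown $C_G(A)=R_G(A)\times U$ as in Lemma \ref{corona}, splits the maximal subgroups into those containing $U$ (handled by induction in $G/U$) and those supplementing $U$ (which have the form $WH^u$ with $W$ a maximal $H$-submodule of $U$), and rules out a containment $\widetilde M_1\subseteq \widetilde M_2$ case by case. Your argument is induction-free and works directly in the primitive quotient $G/\Core_G(M)$: the affine fixed-point count $|\widetilde M|=|\Core_G(M)|\cdot\sum_{h}|(1-h)V_M|$, together with the Cauchy--Frobenius lemma applied to the transitive action of $C_2=C_G(A_2)$ on $G/M_1$ (transitivity coming from the fact that every nontrivial normal subgroup of a soluble primitive group contains its regular socle), forces $\Core_G(M_2)\subseteq\Core_G(M_1)$, then equality of the cores and a $G$-isomorphism $A_1\cong_G A_2$, whence $|\widetilde M_1|=|\widetilde M_2|$ and the one-sided containment collapses. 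Your route yields strictly more than the statement: it shows that $\widetilde M_1\subseteq\widetilde M_2$ already forces $M_1$ and $M_2$ to have the same core and $G$-isomorphic associated chief factors (hence, by the classical conjugacy of core-free maximal subgroups of a soluble primitive group, $M_1$ and $M_2$ are in fact conjugate), and it produces a closed formula for $|\widetilde M|$; the price is redoing by hand some structure theory of primitive soluble groups that the paper gets for free from the crown machinery it has already set up. One cosmetic slip: $\Core_G(M)$ need not be a \emph{maximal} normal subgroup of $G$ (take $M=\langle (1,2)\rangle$ in $\sym(3)$, whose core is trivial while $\alt(3)$ is a larger proper normal subgroup); but you only ever use the primitivity of $G/\Core_G(M)$, so nothing in the argument is affected.
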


\begin{proof}
We prove the statement by induction on the order of $G.$  We may assume  $\frat G=1$. Choose a nontrivial $G$-module $A\in \mathcal V_G$ such that $R=R_G(A), U, C=C_G(A)$ satisfy
the property described in Lemma \ref{corona}. We further choose a complement $H$ of $U$ in $G$ with $R\leq H.$
We denote by $\mathcal M_1$ the set of the maximal subgroups of $G$ containing $U$ and by $\mathcal M_2$ the set of the maximal subgroups of $G$ supplementing $U.$ If $M\in \mathcal M_2$ then, by Lemma \ref{relativo}, $R\subseteq M$ and $M = WH^u$ with $W$ a
maximal $H$-submodule of  $U$ and $u\in U.$ Assume now $\widetilde  M_1\subseteq \widetilde M_2.$  We consider the different cases:
\begin{enumerate}
	\item $M_1, M_2 \in \mathcal M_1$. In this case $\widetilde {M_1/U}\subseteq \widetilde{M_2/U},$ so by induction $\widetilde {M_1/U}=\widetilde{M_2/U},$ and consequently $\widetilde {M_1} = \widetilde{M_2}.$
		\item $M_1, M_2 \in \mathcal M_2$.  We have $M_1=W_1H^{u_1}$ and $M_2=W_2H^{u_2}.$ If $W_1\neq W_2,$ then
		$\gen{M_1^{g_1},M_2^{g_2}}=G$ for every $g_1, g_2 \in G$, hence we cannot have neither the inclusion $\widetilde  M_1\subseteq \widetilde M_2$ nor the inclusion $\widetilde  M_2\subseteq \widetilde M_1.$ If $W_1=W_2$ then $M_1$ and $M_2$ are conjugates and $\widetilde M_1=\widetilde M_2.$
	\item $M_1\in \mathcal M_1$ and $M_2\in \mathcal M_2.$
	In this case $\gen{M_1^{g_1},M_2^{g_2}}=G$ for every $g_1, g_2 \in G$ and, as above, we cannot have neither $\widetilde  M_1\subseteq \widetilde M_2$ nor $\widetilde  M_2\subseteq \widetilde M_1.$ \qedhere
\end{enumerate}
\end{proof}

In particular, it follows from the previous lemma that Question \ref{faina} has an affirmative answer in case of finite soluble groups. 

We make another little regression before going on with the next, more substantial, section. It is well known that if a prime $p$ divides the order of a finite group $G$, then it divides also the order of $G/\frat(G)$. In particular, $G \setminus \frat(G)$ contains elements whose order is divisible by $p$. The analogue statement for invariable generation is false in general. For instance, if $G=\text{Alt}(5)$ then $G \setminus \frat_I(G)$ does not contain elements whose order is divisible by $2$.

Notice that in the case of classical generation we can say a little more, namely, we can say that $G \setminus \frat(G)$ contains elements of $p$-power order. This follows from the fact that it is always possible to lift an element without affecting the set of prime divisors of its order. For soluble groups, the corresponding \lq invariable' statement is true as well, although for the proof we invoke Hall's theorems.

\begin{lemma}
	\label{pote}
	Let $G$ be a finite soluble group. If a prime $p$ divides $|G|$, then the set $G \setminus \frat_I(G)$ contains elements of $p$-power order.
\end{lemma}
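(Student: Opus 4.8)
The plan is to exploit the description of $\frat_I(G)$ that is available in the soluble case. Indeed, by the discussion preceding this lemma, for a soluble group $G$ one has $\frat_I(G)=\bigcap_{M}\widetilde M$, the intersection running over all maximal subgroups $M$ of $G$ (this is the affirmative answer to Question \ref{faina} for soluble groups). Consequently $\frat_I(G)\subseteq \widetilde M$ for every maximal subgroup $M$, so in order to exhibit an element of $p$-power order outside $\frat_I(G)$ it suffices to produce a single maximal subgroup $M$ together with a $p$-element $g$ satisfying $g\notin\widetilde M$, i.e. such that no conjugate of $g$ lies in $M$. I would reformulate this last condition through the coset action: letting $G$ act on $\Omega=G/M$, an element $g$ is conjugate into $M$ if and only if $g$ fixes some point of $\Omega$. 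Hence the goal becomes to find a maximal subgroup $M$ and a $p$-element $g$ acting \emph{without fixed points} on $G/M$.

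The next step, and the place where solubility enters essentially, is the choice of $M$. By Hall's theorem $G$ possesses a Hall $p'$-subgroup $K$; since $p\mid|G|$, $K$ is proper, hence contained in some maximal subgroup $M$. As $K\le M$, the index $[G:M]$ divides $[G:K]=|G|_p$, so that $[G:M]=p^a$ is a power of $p$ with $a\ge 1$. Now let $P$ be a Sylow $p$-subgroup of $G$. A routine count of orders gives $PM=G$ (the $p$-part of $|M|$ equals $|G|_p/p^a$, whence $[P:P\cap M]=p^a=[G:M]$); equivalently, $P$ already acts transitively on $\Omega=G/M$, a set of size $p^a$.

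It then remains to locate a fixed-point-free element inside the transitive $p$-group $P$ acting on $\Omega$, and here the argument is elementary. The stabilizer $P_\omega=P\cap M$ of the coset $\omega=M$ is a proper subgroup of $P$, so it lies in a maximal subgroup $Q$ of $P$; then $Q\trianglelefteq P$ and $[P:Q]=p$. The $Q$-orbits on $\Omega$ form a system of $p$ blocks (the block of $\omega$ has size $[Q:P\cap M]=p^{a-1}$), and since $Q$ is normal these blocks are permuted by $P$ through the regular action of $P/Q\cong C_p$ on $p$ points. Therefore any $g\in P\setminus Q$ permutes the $p$ blocks in a single $p$-cycle, fixes no block, and hence fixes no point of $\Omega$. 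Such a $g$ is a nontrivial element of $p$-power order which is not conjugate into $M$, so $g\in G\setminus\widetilde M\subseteq G\setminus\frat_I(G)$, as required.

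I expect the decisive point to be the first two: the reduction translating membership in $\frat_I(G)$ into a fixed-point statement (which relies on the soluble identity $\frat_I(G)=\bigcap_M\widetilde M$), and the use of Hall's theorem to manufacture a maximal subgroup of $p$-power index. Once $M$ is chosen so that $P$ is transitive on $G/M$, the block argument produces the desired fixed-point-free $p$-element almost for free; without the soluble-specific input, neither the clean description of $\frat_I$ nor the existence of such an $M$ is guaranteed, which is exactly why the statement is phrased for soluble groups.
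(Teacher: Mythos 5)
Your proof is correct, but it follows a genuinely different route from the paper's. The paper lifts a nontrivial element of each complemented factor of a chief series to an element of prime-power order, checks that these form an invariable generating set, extracts a minimal one $X$ (whose members lie outside $\frat_I(G)$ by Lemma \ref{polpo}), and then invokes Hall's theorems to say that if $X$ contained no $p$-element it would be contained in $\widetilde K$ for $K$ a Hall $p$-complement, contradicting $\gen{X}_I=G$. You instead use Hall's theorem at the other end: you embed a Hall $p'$-subgroup in a maximal subgroup $M$, forcing $[G:M]=p^a$, observe that a Sylow $p$-subgroup $P$ is then transitive on $G/M$, and produce a fixed-point-free element of $P$ by the block argument through a normal maximal subgroup $Q$ of $P$ (all steps check out: $Q_\omega=P\cap M$, the $p$ blocks of size $p^{a-1}$, and the regular action of $P/Q$ on them). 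The one point worth flagging is that your reduction to \lq\lq{}find $g\notin\widetilde M$ for a single maximal $M$\rq\rq{} genuinely requires the soluble answer to Question \ref{faina} (equivalently the lemma immediately preceding this one), since for an arbitrary maximal subgroup $\widetilde M$ need not belong to $\Sigma(G)$ and a priori $\frat_I(G)\not\subseteq\widetilde M$; you correctly cite this, so there is no gap. What each approach buys: the paper's argument is shorter once the chief-series machinery is accepted and only needs Lemma \ref{polpo}; yours is more constructive, exhibiting an explicit nontrivial $p$-element that acts without fixed points on a concrete coset space $G/M$, at the cost of leaning on the preceding structural lemma about $\widetilde M$'s in soluble groups.
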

\begin{proof} 
	Consider a chief series of $G$, choose a nontrivial element from every complemented chief factor, and lift it to an element of $G$ of prime power order. It is easy to check that these elements together form an invariable generating set; we may therefore extract a minimal invariable generating set $X$. If $X$ did not contain any element of $p$-power order, then Hall's theorems  would imply $X \subseteq \widetilde K$, where $K$ is a Hall $p$-complement, contradicting the fact that $\gen{X}_I=G$.
\end{proof}

We apply this to prove a lemma that we will need in the following section. Unless otherwise stated, here and in the following sections modules are written multiplicatively, so that $1$ denotes the identity element.

%\begin{lemma}{\cite[Corollary 1.73, p. 21]{K}}
	%\label{Khu}
	%Let $V$ be an abelian $p$-group. If $\phi$ is an automorphism of $V$ of $p$-power order, then $C_V(\phi) \neq 1$.
%\end{lemma}

\begin{lemma}\label{nove} Let $H$ be a finite soluble group, and let $V$ be an $H$-module of finite $p$-power order. If $C_V(h)=1$ for every $h \in H \setminus \frat_I(H)$, then $p$ does not divide $|H|$.
\end{lemma}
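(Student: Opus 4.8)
The plan is to argue by contradiction, feeding Lemma \ref{pote} into the classical observation that a nontrivial $p$-element cannot act fixed-point-freely on a nontrivial finite $p$-group. Throughout I would take $V \neq 1$, as otherwise there is nothing of substance in the statement; so assume $V \neq 1$ and suppose, for a contradiction, that $p$ divides $|H|$.

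First I would invoke Lemma \ref{pote}: since $H$ is soluble and $p$ divides $|H|$, the set $H \setminus \frat_I(H)$ contains an element $h$ of $p$-power order. I would record that $1 \in \frat_I(H)$ — the identity can be dropped from every invariable generating set, so this is immediate from Lemma \ref{polpo} — which forces $h \neq 1$; thus $h$ is a \emph{nontrivial} $p$-element of $H$.

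The next step is the standard fixed-point count. Since $\gen{h}$ is a nontrivial $p$-group acting on the finite $p$-group $V$, the $\gen{h}$-orbits on the set $V \setminus C_V(\gen{h})$ each have size a power of $p$ exceeding $1$, hence a multiple of $p$; therefore $|C_V(\gen{h})| \equiv |V| \equiv 0 \pmod p$, and in particular $C_V(h) = C_V(\gen{h}) \neq 1$. This contradicts the hypothesis $C_V(h)=1$, applied to $h \in H \setminus \frat_I(H)$. Hence $p$ does not divide $|H|$.

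I expect no serious obstacle here: the real work is packaged inside Lemma \ref{pote}, where Hall's theorems were used to exhibit an element of $p$-power order lying outside $\frat_I(H)$. The only delicate point is to guarantee that the element supplied by that lemma is nontrivial, which is exactly why I would make explicit that $1$ belongs to $\frat_I(H)$.
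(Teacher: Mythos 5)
Your proof is correct and follows essentially the same route as the paper: argue by contradiction, invoke Lemma \ref{pote} to produce a $p$-element $h$ outside $\frat_I(H)$, and then use the standard fact that a $p$-group acting on a nontrivial finite $p$-group has nontrivial fixed points. The paper packages that last step as $V\cap Z(V\rtimes\gen h)\neq 1$ rather than by orbit counting, but this is the same argument, and your explicit remarks that $V\neq 1$ and $1\in\frat_I(H)$ are harmless (indeed the fixed-point conclusion holds even for $h=1$).
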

\begin{proof}
	Assume by contradiction that $p$ divides $|H|$. Then, by Lemma \ref{pote} there exists $h \in H \setminus \frat_I(H)$ of $p$-power order. Now we may construct $G=V \rtimes \gen h$. This is a finite $p$-group, hence $V \cap Z(G) \neq 1$, from which $C_V(h) \neq 1$, contradicting the hypothesis.
\end{proof}

\section{$\mathcal B_I$-groups}\label{ssette}
\label{bigr}

A finite group $G$ is called a $\mathcal B$-group if $d(G)=m(G).$ The letter $\mathcal B$ refers to the word \lq basis\rq, \ since the property $d(G) = m(G)$ is a fundamental one for finite dimensional vector spaces. A classification of the Frattini-free $\mathcal B$-groups is given in \cite[Theorem 1.4]{ak}: $G$ is a Frattini-free $\mathcal B$-group if and only if one of the following holds:
\begin{enumerate}
	\item $G$ is an elementary abelian $p$-group for some
	prime $p;$ 
	\item $P\rtimes Q,$
	%$P\rtimes Q,$ 
	where $P$ is an elementary
	abelian $p$-group and $Q$ is a nontrivial cyclic $q$-group, for distinct primes $p\neq q,$ such that $Q$ acts faithfully on $P$ and the $Q$-module $P$ is a direct sum of $m(G)-1$ isomorphic copies of one simple module.
\end{enumerate}
We may give a similar definition for the invariable generation: a finite group $G$ is called a $\mathcal B_I$-group if $d_I(G)=m_I(G).$ It turns out that $\mathcal B$-groups are $\mathcal B_I$-groups (we include this statement in Proposition \ref{strutturabi} below).  Indeed, $\mathcal B$-groups are soluble. Moreover, $m(G)=d(G)\leq d_I(G)\leq m_I(G)=m(G)$, where the last equality follows from Theorem \ref{cano} (one can also check directly that the groups in (1) and (2) are $\mathcal B_I$-groups).

The converse implication is false. For example $d_I(\alt(5))=m_I(\alt(5))=2,$
so $\alt(5)$ is a $\mathcal B_I$-group but not a  $\mathcal B$-group. Another example is the following. Since $\alt(5)\cong \ssl(2,4)$ we may consider $G=\asl(2,4)\cong V\rtimes \alt(5)$, where $V$ is a 2-dimensional vector space over the field $\mathbb F_4$ with four elements. The elements of order 3 and 5 in $\alt(5)$ act fixed-point-freely on $V$, so if $g\in G$ either $|g|$ divides 4 or $g$ is conjugate to an element of order 3 or 5 in $\alt(5).$
If $X$ is an invariable generating set of $G$, then $X$ contains necessarily an element of order 3, an element of order 5 and a 2-element with a nontrivial power in $V$; but three elements of this kind invariably generate $G$, so $d_I(G)=m_I(G)=3.$

In this section we want to study the structure of soluble $\mathcal B_I$-groups. First notice that there exist soluble $\mathcal B_I$-groups that are not $\mathcal B$-groups. Indeed the quaternion group $Q_8$ is isomorphic to an irreducible subgroup of $\GL(2,3)$ and we may consider $G=V\rtimes Q_8$ where $V$ is a 2-generated vector space over the field $\mathbb F_3.$ The action of $Q_8$ on $V$ is fixed-point-free, which implies that no element of $G$ has order 6, so an invariable generating set of $G$ must contain two elements of order 4 and one element of order 3, and consequently $d_I(G)=3=m_I(G).$

It turns out that the soluble $\mathcal B_I$-groups which are not $\mathcal B$-groups are, in a sense, generalisations of the above example.

\begin{lemma}\label{prelim}
	Assume that $H$ is a finite soluble group and that $N$ is a faithful irreducible $H$-module. Then $G=N\rtimes H$ is a $\mathcal B_I$-group if and only if the following conditions hold:
	\begin{enumerate}
		\item $H$ is a $\mathcal B_I$-group.
		\item  $C_N(h)=1$ for every $h\in H\setminus \frat_I(H).$
	\end{enumerate}
\end{lemma}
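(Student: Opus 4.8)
The plan is to recognize $N$ as the $N$-crown of $G$ and feed everything through Proposition \ref{matrici}. Since $N$ is faithful and irreducible we have $C_G(N)=N$ and $R_G(N)=1$, so $N$ is a crown with multiplicity $\delta=1$, and Proposition \ref{matrici} (equivalently Corollary \ref{corimp}) applies with $K=H$, $A=N$, $\delta=1$. In this case its criterion reads: if $\langle y_1,\dots,y_t\rangle_I=H$ with $y_i\in H$ and $w_1,\dots,w_t\in N$, then $\{w_1y_1,\dots,w_ty_t\}$ invariably generates $G$ if and only if $w_i\notin[y_i,N]$ for at least one $i$. Since $N$ is finite, $[y_i,N]=N$ exactly when $C_N(y_i)=1$; hence an index $i$ can help generate $G$ only if $y_i$ has a nonzero fixed point on $N$. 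Writing every element of $G$ uniquely as $wy$ with $w\in N$, $y\in H$ (and noting $y=\pi(wy)$) this criterion governs the whole analysis.

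First I would prove, with no hypotheses, that $m_I(G)=m_I(H)+1$. For $\geq$: take a minimal invariable generating set $\{y_1,\dots,y_m\}$ of $H$ of maximal size $m=m_I(H)$, lift it to $G$ and adjoin any $n\in N\setminus\{1\}$; since removing $n$ leaves a set whose image invariably generates $H$ but which fails the criterion (all $w_i=1$), while removing any $y_i$ destroys generation modulo $N$, the enlarged set is a minimal invariable generating set of size $m+1$. For $\leq$: let $X=\{x_1,\dots,x_t\}$ be minimal for $G$, write $x_i=w_iy_i$, so $\langle y_1,\dots,y_t\rangle_I=H$. Call $k$ \emph{essential} if $\langle y_i : i\neq k\rangle_I\neq H$. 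If $k$ is non-essential, applying the criterion to $\{x_i:i\neq k\}$ forces the unique index with $w_i\notin[y_i,N]$ to be $k$; hence there is at most one non-essential index, and consequently the $y_i$ are pairwise distinct. If all indices are essential then $\{y_1,\dots,y_t\}$ is a minimal invariable generating set of $H$, giving $t\leq m_I(H)$; if exactly one index $k$ is non-essential then $\{y_i:i\neq k\}$ is one, giving $t-1\leq m_I(H)$. Either way $t\leq m_I(H)+1$.

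Next I would pin down $d_I(G)$. Projection gives $d_I(G)\geq d_I(H)$, and Lemma \ref{basico}(iii) gives $d_I(G)\leq d_I(H)+1$. Moreover $d_I(G)=d_I(H)$ holds precisely when some minimum-size invariable generating set $\{y_1,\dots,y_d\}$ of $H$ ($d=d_I(H)$) contains a $y_i$ with $C_N(y_i)\neq 1$: an invariable generating set of $G$ of size $d$ must project bijectively onto such a set, and the criterion then requires an index with $[y_i,N]\neq N$. Combining with $m_I(G)=m_I(H)+1$, the group $G$ is a $\mathcal B_I$-group iff $d_I(G)=m_I(H)+1$; as $d_I(G)\leq d_I(H)+1\leq m_I(H)+1$, this forces simultaneously $d_I(G)=d_I(H)+1$ \emph{and} $d_I(H)=m_I(H)$, i.e. condition (1). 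Thus $G$ is a $\mathcal B_I$-group iff $H$ is a $\mathcal B_I$-group and $d_I(G)=d_I(H)+1$.

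Finally, assuming $H$ is a $\mathcal B_I$-group, I would translate $d_I(G)=d_I(H)+1$ into condition (2). Since $H$ is $\mathcal B_I$, all its minimal invariable generating sets have the same size $d_I(H)$, so by the previous step $d_I(G)=d_I(H)+1$ is equivalent to: every minimal invariable generating set of $H$ consists of elements acting fixed-point-freely on $N$. Lemma \ref{polpo} bridges this to (2): an element lies outside $\frat_I(H)$ iff it is non-droppable from some invariable generating set, which (by shrinking that set while retaining the element, as monotonicity of invariable generation permits) is equivalent to lying in some minimal invariable generating set. Hence the condition above is equivalent to $C_N(h)=1$ for every $h\in H\setminus\frat_I(H)$, and substituting back yields that $G$ is a $\mathcal B_I$-group iff (1) and (2) hold. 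The main obstacle is exactly this last translation: one must handle carefully both the shrinking argument producing a minimal invariable generating set through a prescribed non-droppable element, and the role of the $\mathcal B_I$ hypothesis in collapsing \lq minimal' and \lq minimum', so that the fixed-point-free property of individual minimal sets propagates to all of $H\setminus\frat_I(H)$.
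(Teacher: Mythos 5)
Your proof is correct and follows essentially the same route as the paper's: it reduces to Proposition \ref{matrici} with $\delta=1$ (so a lift $\{w_1y_1,\dots,w_ty_t\}$ invariably generates $G$ iff some $w_i\notin[y_i,N]$, equivalently some $C_N(y_i)\neq 1$), combines this with Lemma \ref{basico}, and translates the resulting condition on minimal invariable generating sets of $H$ into membership in $H\setminus\frat_I(H)$ via Lemma \ref{polpo}. The paper's proof is a three-sentence compression of exactly this argument; your intermediate claims $m_I(G)=m_I(H)+1$ and the characterization of when $d_I(G)=d_I(H)$ are precisely the details it leaves to the reader.
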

\begin{proof} Notice that if $\langle h_1,\dots,h_t\rangle_I=H$ and $1\neq n\in N$, then by Lemma \ref{basico} $\langle h_1,\dots,h_t,n\rangle_I=G.$ Moreover, by Proposition \ref{matrici}, there exist $n_1,\dots,n_t \in N$ such that $\langle h_1n_1,\dots,h_tn_t\rangle_I=G$ if and only if $C_N(h_i)\neq 1$ for some $1\leq i\leq t.$ So $G$ is a $\mathcal B_I$-group if and only if all the minimal invariable generating sets of $H$ have the same cardinality (i.e. $H$ is a $\mathcal B_I$-group) and whenever an element $h$ of $H$ appears in some minimal invariable generating set of $H$ (i.e. whenever $h\not\in \frat_I(G)),$ then $C_N(h)=1.$
\end{proof}

%Accidentally, in the following we will essentially reprove {\cite[Theorem 1.4]{ak}}.

\begin{prop}\label{strutturabi}
	Let $G$ be  a finite soluble group. Then $G$ is a $\mathcal B_I$-group if and only if one of the following occurs:
	\begin{enumerate}
		\item $G$ is a $\mathcal B$-group;
		\item $G/\frat(G)\cong N\rtimes H$ where $H$ is a $\mathcal B_I$-group, $N$ is a faithful irreducible $H$-module and $C_N(h)=1$ for every $h\in H\setminus \frat_I(H).$ In particular, by Lemma \ref{nove}, $H$ and $N$ have coprime orders.
	\end{enumerate}
\end{prop}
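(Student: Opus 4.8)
The plan is to reduce the classification of soluble $\mathcal B_I$-groups to the local analysis already packaged in Lemma \ref{prelim}, using crowns to peel off a single minimal normal subgroup. First I would dispose of the Frattini subgroup. Since $\frat(G) \subseteq \frat_I(G)$ (as observed after Lemma \ref{polpo}), an element lies in $\frat(G)$ iff it can be dropped from every (ordinary, hence every invariable) generating set, and dropping it affects neither $d_I$ nor $m_I$; thus $G$ is a $\mathcal B_I$-group iff $G/\frat(G)$ is. So I may assume $\frat(G)=1$ and prove the equivalence for Frattini-free $G$, where cases (1) and (2) should become mutually exclusive and exhaustive.

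For the forward direction, assume $G$ is a Frattini-free soluble $\mathcal B_I$-group and suppose $G$ is not a $\mathcal B$-group; I must produce the structure in (2). Apply Lemma \ref{corona} to obtain a nontrivial $A \in \mathcal V_G$ with $C_G(A) = R_G(A) \times U$, and write $U \cong_G A^{\delta}$. The key numerical input is that, by \cite[Theorem 2]{min}, $m(G)$ equals the number of non-Frattini chief factors, combined with Theorem \ref{cano} giving $m_I(G)=m(G)$. Since $G$ is a $\mathcal B_I$-group, $d_I(G)=m_I(G)=m(G)$. I would argue that if $\delta \geq 2$, or if $G$ had any further complemented chief factor besides a single copy of $A$, then one could build a minimal invariable generating set that is strictly smaller than one of size $m(G)$ — exploiting Proposition \ref{matrici}, which lets a single element $h_i$ with $C_A(h_i)\neq 1$ ``absorb'' a full copy of $A$ into the semidirect product. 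This forces $\delta=1$, so that $A$ itself is minimal normal and $G = N \rtimes H$ with $N=U=A$ a faithful irreducible $H$-module (faithfulness because $R_G(A)=C_G(A)\cap(\text{the complement})$ collapses when $\delta=1$ and $G$ is not a $\mathcal B$-group). Lemma \ref{prelim} then gives directly that $H$ is a $\mathcal B_I$-group and that $C_N(h)=1$ for all $h \in H \setminus \frat_I(H)$, which is exactly (2); the coprimality of $|H|$ and $|N|$ is immediate from Lemma \ref{nove}.

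The converse is the easy direction: if $G$ is a Frattini-free $\mathcal B$-group then $G$ is a $\mathcal B_I$-group by the discussion preceding Lemma \ref{prelim} (namely $m(G)=d(G)\leq d_I(G)\leq m_I(G)=m(G)$ via Theorem \ref{cano}), and if $G \cong N \rtimes H$ is as in (2) then Lemma \ref{prelim} gives the conclusion verbatim. I expect the main obstacle to be the forward direction's dichotomy: precisely, showing that failure of the $\mathcal B$-group property forces the crown to be a \emph{single} copy ($\delta=1$) of a faithful module over a complement on which the fixed-point condition holds, rather than several copies or a crown sitting over a nontrivial $R_G(A)$. This requires carefully counting minimal invariable generating sets of different sizes through Proposition \ref{matrici} and Corollary \ref{corimp}, ruling out the possibility that varying how many of the $h_i$ act fixed-point-freely produces two minimal invariable generating sets of distinct cardinalities — which would contradict the $\mathcal B_I$ hypothesis and is exactly the mechanism that must be pinned down to separate case (1) from case (2).
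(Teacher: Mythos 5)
Your reduction to $\frat(G)=1$ and your treatment of the converse direction match the paper, but the forward direction has genuine gaps. The paper does not argue through a single crown: it works with the Fitting subgroup $F=\fit(G)$, writes $F=N_1\times\cdots\times N_t$ as a product of irreducible modules for a complement $H$, and first proves \emph{homogeneity}, i.e.\ that all the $N_i$ are $G$-isomorphic. (This is done by taking $x=x_1x_2$ with $1\neq x_i\in N_i$ together with invariable generators of $G$ modulo $N_1N_2$, and observing that a proper subgroup meeting all the relevant conjugacy classes would be a common complement to $N_1$ and $N_2$, forcing $N_1\cong_G N_2$.) This step is absent from your proposal, and without it you cannot conclude that $G/\frat(G)$ has the shape $N\rtimes H$ with $N$ irreducible; nor can you get faithfulness of $N$, which in the paper comes from $F=C_G(F)$ once $t=1$ is known, not from the unproved assertion that $R_G(A)$ ``collapses''.

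The more serious problem is your claimed dichotomy: you assert that if $\delta\geq 2$, or if there is any further complemented chief factor beyond one copy of $A$, then one can build a shorter minimal invariable generating set by letting some $h_i$ with $C_A(h_i)\neq 1$ absorb a copy of $A$. This is false as stated: a $\mathcal B$-group of type (2) in \cite[Theorem 1.4]{ak}, say an elementary abelian $p$-group of rank $k\geq 2$ extended by a cyclic $q$-group acting homogeneously and faithfully, has $\delta=k\geq 2$ and is nevertheless a $\mathcal B_I$-group, so no shorter invariable generating set exists there. The absorption count from Proposition \ref{matrici} gives $\sum_i\dim_K C_N(y_i)\geq n(t-1)$ with $n=\dim_{\End_H N}N$, and one needs $n(t-1)\geq t$; for $t\geq 2$ this holds exactly when $n\geq 2$. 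The paper accordingly splits into the cases $n=1$ (where $G/F$ is cyclic, no restriction on $t$ arises, and $G$ is a $\mathcal B$-group of type (2)) and $n\geq 2$ (where the count forces $t=1$ and Lemma \ref{prelim} applies). Without this case distinction on $n$, your mechanism would wrongly exclude the type-(2) $\mathcal B$-groups, i.e.\ it proves something false.
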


\begin{proof}We may assume $\frat(G)=1.$
	Let $m=m_I(G)=m(G)$ and $F=\fit G.$ We have that $F$ has a complement $H$ in $G$ and that $F=N_1\times \dots \times N_t$ where $N_i$ is an irreducible $H$-module. First we claim that $N_i\cong_G N_j$ for every $i\neq j.$ Indeed assume for example $N_1\not\cong_GN_2.$ Choose $1\neq x_1\in N_1$ and $1\neq x_2\in N_2$ and let $x=x_1x_2.$ Take a set $\{y_1,\dots,y_{m-2}\}$ of invariable generators of $G$ modulo $N_1N_2$ and consider $X=\{y_1.\dots,y_{m-2},x\}.$ Assume that there exist $g_1,\dots,g_{m-2},g \in G$ such that $Y:=\langle y_1^{g_1},\dots,y_{m-2}^{g_{m-2}},x^g\rangle\neq G.$ It follows that $Y$ is a common complement of $N_1$ and $N_2$, but this implies $N_1\cong_G N_2$, a contradiction. So $\langle X \rangle_I=G$ and $d_I(G)\leq m-1<m=m_I(G),$ against the assumption that $G$ is a $\mathcal B_I$-group. So our claim has been proved and we may assume $F\cong_G N^t$ for a suitable irreducible $G$-module $N.$
	Let $K=\End_H N$ and $n=\dim_{K}N.$ Recall that $F=C_G(F)$ and $G/F$ is isomorphic to a subgroup of $\GL(q,n)$, being $q=|K|.$ There are three cases:
	\begin{enumerate}
		\item[(a)] $N$ is central. In this case $G=F$ is  an elementary abelian $p$-group.
		\item[(b)] $N$ is non central and $n=1$: in this case $G/F$ is cyclic: but then $m_I(G/F)=d_I(G/F)=1$, hence $G/F$ is a $q$-group for some prime $q$ not dividing $|N|.$ We conclude that $G=F\rtimes Q,$
		%$P\rtimesQ,$ 
		where $F$ is an elementary
		abelian $p$-group and $Q$ is a nontrivial cyclic $q$-group, for distinct primes $p\neq q,$ such that $Q$ acts faithfully on $F$ and the $Q$-module $F$ is a direct sum of $m(G)-1$ isomorphic copies of one simple module.
		\item[(c)] $N$ is non central and $n\neq 1.$ We claim that this implies $t=1.$ Indeed $G=N^t\rtimes H$ satisfies the hypothesis of Proposition \ref{matrici}. Suppose $t\neq 1$, let $\{y_1,\dots,y_{m-t}\}$ be an invariable generating set of $H$ and take $y_{m-t+1}=\dots=y_{m-1}=1.$ Since 
		\[\quad\quad\quad
\sum_{1\leq i\leq m-1} \dim_K C_N(y_i)\geq\! \!\!\sum_{m-t<i\leq m-1}\!\!\! \dim_K C_N(y_i)=n(t-1)\geq 2(t-1)\geq t,
\]
there exist $w_1,\dots,w_{m-1}\in N^t$ such that $G=\langle y_1w_1,\dots,y_{m-1}w_{m-1}\rangle_I,$ but then $d_I(G)\leq m-1 <m_I(G),$ a contradiction.
	\end{enumerate}
	In cases (a) and (b) $G$ is a $\mathcal B$-group, and it was already observed that $\mathcal B$-groups are $\mathcal B_I$-groups. In case (c) we may apply Lemma \ref{prelim} to conclude that $G$ is a $\mathcal B_I$-group if and only if the condition in (2) is satisfied. 
\end{proof}
To construct  $\mathcal B_I$-groups that are not $\mathcal B$-groups, we have to look for non-cyclic
$\mathcal B$-groups $H$ admitting a faithful irreducible $H$-module $N$ with the property that $C_N(h)=1$ for every $h\in H\setminus \frat_I(H)$, and construct then $G=N\rtimes H.$

For example, the dicyclic group $H$ of order 12 is a $\mathcal B_I$-group and has an irreducible and fixed-point-free action on the 2-dimensional vector space $V$ over the field with 13 elements,
so $N\rtimes H$ is a $\mathcal B_I$-group of order $12\cdot 13^2$ which is not a $\mathcal B$-group.

We can also take $H$ to be a non-cyclic $p$-group. In this case, however, the only possibility to have an irreducible fixed-point-free action is when $p=2$ and $H$ is a generalised quaternion group \cite[10.5.5]{rob}. If we want examples with $p$ odd, we need 
finite $p$-groups $H$ with an irreducible action on a module $N$ which is not fixed-point-free, but such that $C_N(y)=1$ for every $y\notin \frat H = \frat_I H$.

Interestingly, the $p$-groups with this property have been studied with different purposes. They have been called \lq secretive\rq \ in \cite{hid}. Wall \cite{wa} proved that for each prime $p$ and integer $d \geq 2$ there exists a finite secretive $p$-group $P$ with $d(P)=d$. Therefore we have several examples of soluble $\mathcal B_I$-groups which are not $\mathcal B$-groups.

Outside the soluble case we know almost nothing. The problem of investigating the finite unsoluble $\mathcal B_I$-groups is entirely open.

\section{Invariable basis property}\label{slast}

A group $G$ has the basis property if and only if $d(H)=m(H)$ for every $H\leq G.$ The groups with this property are classified in \cite[Corollary A.1]{ak}. In a similar way we can say that $G$ has the invariable basis property if $d_I(H)=m_I(H)$ for every $H\leq G.$ If $G$ has the invariable basis property, then every cyclic subgroup of $G$ has prime-power order. The groups all of whose elements have prime-power order are called CP-groups. They are studied in \cite{hh}.
\begin{lemma}\label{somma}
	Let $G$ be a finite group and let $N$ be a soluble normal subgroup of $G$. Denote by $t$ the number of non-Frattini factors lying below $N$ in a chief series passing through $N.$ If $g_1N,\dots,g_dN$ is a minimal invariable generating set of $G/N$, then $G$ admits a minimal invariable generating set of cardinality $d+t.$ 
\end{lemma}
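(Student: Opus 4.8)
The plan is to induct on $|N|$, peeling off one chief factor at a time; the base case $N=1$ (where $t=0$) is immediate. For the inductive step, choose a minimal normal subgroup $M$ of $G$ with $M\leq N$. Since $N$ is soluble, $M$ is an abelian minimal normal subgroup of $G$, hence an irreducible $G$-module. Passing to $G/M$, which has the soluble normal subgroup $N/M$ and satisfies $(G/M)/(N/M)\cong G/N$, the given set $g_1N,\dots,g_dN$ is a minimal invariable generating set of $(G/M)/(N/M)$. Writing $\overline{t}$ for the number of non-Frattini factors below $N/M$ in $G/M$, the inductive hypothesis furnishes a minimal invariable generating set $\{\overline{z}_1,\dots,\overline{z}_{d+\overline{t}}\}$ of $G/M$. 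Because a chief factor lying strictly above $M$ is complemented in $G$ if and only if it is complemented in $G/M$ (complementation being decided in a quotient common to $G$ and to $G/M$), one has $t=\overline{t}$ when $M\leq\frat(G)$ and $t=\overline{t}+1$ otherwise. It therefore remains to lift a minimal invariable generating set of $G/M$ to one of $G$ of the correct size in each of these two cases.

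Suppose first $M\leq\frat(G)$. Then every maximal subgroup of $G$ contains $M$, so the maximal subgroups of $G$ are precisely the preimages of the maximal subgroups of $G/M$, and for such a maximal subgroup $L$ and any $x\in G$ one has $z\in L^x$ if and only if $zM\in(L/M)^{xM}$. By Lemma \ref{basico}(i) this makes invariable generation of $G$ by a subset equivalent to invariable generation of $G/M$ by its image. Hence an arbitrary lift $\{z_1,\dots,z_{d+\overline{t}}\}$ of $\{\overline{z}_1,\dots,\overline{z}_{d+\overline{t}}\}$ invariably generates $G$, and the same equivalence applied after deleting one element shows the lift is minimal; since $t=\overline{t}$, this is the required set.

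Suppose instead $M\not\leq\frat(G)$, so that $M$ has a complement $H$ in $G$. Here the idea is to lift the $\overline{z}_j$ to elements $z_j\in H$, which is possible since $H\cong G/M$, and then to adjoin a single nonidentity element $n\in M$. As $M$ is an irreducible $G$-module, $n$ generates it as a $G$-module, so Lemma \ref{basico}(iii) gives that $\{z_1,\dots,z_{d+\overline{t}},n\}$ invariably generates $G$; its cardinality is $d+\overline{t}+1=d+t$. No $z_j$ may be discarded, for invariable generation passes to $G/M$ (where $n$ becomes trivial) and removing one $z_j$ would contradict the minimality of $\{\overline{z}_1,\dots,\overline{z}_{d+\overline{t}}\}$.

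I expect the one genuinely delicate point to be the remaining minimality condition, namely that $n$ cannot be dropped; an arbitrary lift of the $\overline{z}_j$ need not have this property, since the lift might already invariably generate $G$. This is exactly where choosing the lifts inside a fixed complement $H$ is decisive: then $\langle z_1,\dots,z_{d+\overline{t}}\rangle\leq H<G$, so the trivial conjugation already witnesses that $\{z_1,\dots,z_{d+\overline{t}}\}$ fails to invariably generate $G$. The remaining bookkeeping — that the count $t$ splits correctly across $G/M$ — is routine, and notably the whole argument relies only on Lemma \ref{basico}, not on the crown machinery.
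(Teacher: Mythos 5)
Your proof is correct and follows essentially the same route as the paper: reduce by induction to a single minimal normal subgroup, lift the generators into a complement so that they visibly fail to generate on their own, and adjoin one nontrivial element of the module via Lemma \ref{basico}(iii). The only difference is cosmetic — you induct on $|N|$ and treat the Frattini case explicitly, whereas the paper inducts on $t$ and passes over that case silently; your version is, if anything, slightly more careful.
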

\begin{proof}
	The proof is by induction on $t$, so it suffices to prove this statement in the particular case when $N$ is a non-Frattini minimal normal subgroup of $G.$ In this case there exists a complement $H$ of $N$ in $G$. For every $i,$ we can write $g_i=h_in_i$ with $h_i\in H$ and $n_i\in N.$ For $1\neq n\in N,$ by Lemma \ref{basico} $\{h_1,\dots,h_d,n\}$ is a minimal invariable generating set of $G.$
\end{proof}

\begin{lemma}\label{ere}
	Let $G$ be a finite group and let $N$ be a soluble normal subgroup of $G$. If $G$ has the invariable basis property, then $G/N$ also has the invariable basis property.
\end{lemma}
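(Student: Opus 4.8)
The plan is to test the invariable basis property subgroup by subgroup and reduce everything to a single application of Lemma \ref{somma}. Every subgroup of $G/N$ has the form $H/N$ for some $H$ with $N \leq H \leq G$, so it suffices to prove $d_I(H/N) = m_I(H/N)$ for each such $H$. Fix one. Since $N$ is soluble and normal in $G$, it is soluble and normal in $H$, and so Lemma \ref{somma} applies with $H$ in place of $G$. Let $t$ be the number of non-Frattini factors lying below $N$ in a chief series of $H$ passing through $N$. This integer is an invariant of the pair $(H,N)$ (independent of the chosen chief series, by the Jordan--H\"older theorem for complemented factors, exactly as in the statement \cite[Theorem 2]{min} used above); in particular it does not depend on any generating set we might lift.

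Next I would feed two extremal generating sets into Lemma \ref{somma}. Choosing a minimal invariable generating set of $H/N$ of smallest cardinality $d_I(H/N)$, and one of largest cardinality $m_I(H/N)$, the lemma produces minimal invariable generating sets of $H$ of cardinalities $d_I(H/N) + t$ and $m_I(H/N) + t$ respectively. In particular both of these numbers lie in the interval $[d_I(H), m_I(H)]$.

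The conclusion then comes essentially for free from the hypothesis. Since $G$ has the invariable basis property and $H \leq G$, we have $d_I(H) = m_I(H)$; that is, \emph{every} minimal invariable generating set of $H$ has the same cardinality. Hence $d_I(H/N) + t = m_I(H/N) + t$, and subtracting $t$ yields $d_I(H/N) = m_I(H/N)$. As $H$ was an arbitrary subgroup of $G$ containing $N$, the quotient $G/N$ has the invariable basis property.

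I do not expect a genuine obstacle here: the only point that needs care is that the shift $t$ is \emph{the same} in both applications of Lemma \ref{somma}, and this is precisely why it matters that $t$ is determined by $N$ as a normal subgroup of $H$ and not by the particular generating set being lifted. Everything else is a formal consequence of $d_I(H) = m_I(H)$ collapsing the interval $[d_I(H),m_I(H)]$ to a single point.
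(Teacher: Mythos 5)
Your proof is correct and is exactly the argument the paper intends: the paper's proof of this lemma consists of the single line ``Follows immediately from Lemma \ref{somma}'', and what you have written is precisely the expansion of that remark (apply Lemma \ref{somma} to each subgroup $H$ with $N\leq H\leq G$ at both extremes $d_I(H/N)$ and $m_I(H/N)$, and use $d_I(H)=m_I(H)$ to cancel the common shift $t$). Your observation that $t$ depends only on the pair $(H,N)$ and not on the lifted generating set is the right point to flag, and it is handled correctly.
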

\begin{proof}
	Follows immediately from Lemma \ref{somma}.
\end{proof}

\begin{prop}\label{partial}
	Suppose that $G$ is a finite soluble group with $\frat(G)=1.$  Then $G$ satisfies the invariable basis property if and only if one of the following occurs.
	\begin{enumerate}
		\item $G$ is an elementary abelian $p$-group.
		\item $G=P\rtimes Q,$
		%$P\rtimes Q,$ 
		where $P$ is an elementary
		abelian $p$-group and $Q$ is a nontrivial cyclic $q$-group, for distinct primes $p\neq q,$ such that $Q$ acts faithfully on $P$ and the $Q$-module $P$ is a direct sum of isomorphic copies of one simple module.
		\item $G=N\rtimes H,$ where $H$ is a generalised quaternion group,  the action of $H$ on $N$ is irreducible and $|N|=p^2$ where $p$ is a prime with $p\equiv 3\mod 4.$ In this case $H$ coincides with the Sylow 2-subgroup of $\ssl(2,p).$
	\end{enumerate}
\end{prop}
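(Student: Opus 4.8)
The plan is to argue by induction on $|G|$, establishing both implications; since $\frat(G)=1$, the content splits into \emph{necessity} (the invariable basis property forces one of (1)--(3)) and \emph{sufficiency} (each of (1)--(3) enjoys the property). For necessity, taking $H=G$ shows that $G$ is itself a $\mathcal B_I$-group, so Proposition \ref{strutturabi} applies. If $G$ is a $\mathcal B$-group, then being Frattini-free it is, by the classification of Frattini-free $\mathcal B$-groups recalled at the start of Section \ref{ssette}, exactly of type (1) or (2). Hence I may assume we are in the remaining case: $G\cong N\rtimes H$ with $H$ a $\mathcal B_I$-group, $N$ a faithful irreducible $H$-module (an elementary abelian $p$-group), $C_N(h)=1$ for every $h\in H\setminus\frat_I(H)$, and $\gcd(|N|,|H|)=1$. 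The task is to show that this forces type (3).

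The first step is to upgrade the partial fixed-point condition to a full one. Because $G$ has the invariable basis property, every cyclic subgroup of $G$ has prime-power order, that is, $G$ is a CP-group. I claim this forces $C_N(h)=1$ for \emph{every} $1\neq h\in H$: if some $1\neq n\in C_N(h)$ existed, then $nh=hn$ would have order $\mathrm{lcm}(p,|h|)=p\,|h|$, divisible by the two distinct primes $p$ and any prime dividing $|h|$, contradicting CP. Thus $H$ acts fixed-point-freely on $N$, so $H$ is a Frobenius complement; in particular every Sylow subgroup of $H$ is cyclic or generalised quaternion. Moreover $H$ is non-cyclic, for otherwise $H$ acting faithfully and irreducibly would have a one-dimensional module over its endomorphism field, placing us back in the $\mathcal B$-group case.

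Next I would pin down $H$ as generalised quaternion, using that being simultaneously a CP-group and a Frobenius complement is very restrictive. If $|H|$ were divisible by two distinct primes, then either $H$ has even order — in which case its unique involution $z$ is central, and for any element $u$ of odd prime order the product $zu$ has order $2|u|$, contradicting CP — or $H$ has odd order, in which case (every subgroup of a Frobenius complement of order $q_1q_2$ being cyclic) $H$ contains an element of order $q_1q_2$, again contradicting CP. Hence $|H|$ is a prime power. An odd $q$-group that is a Frobenius complement is cyclic, which we have excluded; so $H$ is a non-cyclic $2$-group that is a Frobenius complement, and therefore generalised quaternion by \cite[10.5.5]{rob}.

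The last step, which I expect to be the main obstacle, is the representation-theoretic bookkeeping that identifies $N$ and $p$. Since $H$ is nonabelian it admits no faithful one-dimensional representation, and its unique faithful irreducible module is two-dimensional and absolutely irreducible; hence $\End_H(N)=\mathbb{F}_p$ and $|N|=p^2$, and $H$ embeds into $\GL(2,p)$. The central involution $z$ acts as the scalar $-1$, and a determinant computation places $H$ inside $\ssl(2,p)$, where one identifies the generalised quaternion $H$ with the Sylow $2$-subgroup. To force $p\equiv3\pmod4$ I would test the invariable basis property on the subgroup $N\rtimes C$, with $C=\gen x$ a cyclic subgroup of $H$ of order $4$: its being $\mathcal B_I$ requires, by the argument in the proof of Proposition \ref{strutturabi}, that $N|_{C}$ be \emph{homogeneous}; but the eigenvalues of $x$ are the primitive fourth roots $\pm\sqrt{-1}$, so if $p\equiv1\pmod4$ these lie in $\mathbb{F}_p$ and $N|_{C}$ splits as a sum of two non-isomorphic lines, violating homogeneity. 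Thus $p\equiv3\pmod4$. For sufficiency one checks directly that the groups in (1) and (2), and in (3), satisfy $d_I(K)=m_I(K)$ for every subgroup $K$: for (3) the key observation, again via Proposition \ref{strutturabi} and the fixed-point-free action, is that every subgroup of $N\rtimes H$ either contains $N$ and reduces to type (1)/(2), or is a cyclic or dihedral group supported on a line of $N$, or is a subgroup of $H$ — and each of these is a $\mathcal B_I$-group.
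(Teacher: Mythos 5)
Your overall skeleton matches the paper's: reduce via Proposition \ref{strutturabi}; upgrade the condition $C_N(h)=1$ for $h\notin\frat_I(H)$ to a fixed-point-free action using the CP property and the coprimality of $|N|$ and $|H|$; force $H$ to be a non-cyclic group of prime-power order acting fixed-point-freely, hence generalised quaternion; then do the linear algebra. Your route to ``$|H|$ is a prime power'' differs from the paper's, which simply quotes Higman's theorem that a soluble CP-group has at most two prime divisors and concludes at once from $\gcd(|N|,|H|)=1$; your more hands-on version is workable but leans on the nontrivial facts that every subgroup of order $q_1q_2$ of a Frobenius complement is cyclic and that such a subgroup exists (the latter needs the Z-group structure of an odd-order Frobenius complement; recall that a soluble group of order divisible by $q_1q_2$ need not contain a subgroup of that order, e.g.\ $\alt(4)$ has no subgroup of order $6$).

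The genuine gap is the sentence ``its unique faithful irreducible module is two-dimensional and absolutely irreducible; hence $\End_H(N)=\mathbb{F}_p$ and $|N|=p^2$.'' This is false for $H=Q_{2^n}$ with $n\geq 4$: a faithful irreducible $\mathbb{F}_pH$-module is two-dimensional over $\End_H(N)$, which may be a proper extension of $\mathbb{F}_p$, and there need not be a unique such module. Concretely, for $Q_{16}$ over $\mathbb{F}_3$ the element of order $8$ must have primitive $8$th roots of unity as eigenvalues; since $\sqrt{2}\notin\mathbb{F}_3$, the two complex faithful irreducibles fuse into a single $\mathbb{F}_3$-irreducible of dimension $4$ with endomorphism ring $\mathbb{F}_9$, so $|N|=3^4\neq p^2$. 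As $|N|=p^2$ is exactly what puts you in case (3), this step cannot be asserted. The paper closes it by restricting to the copy $K=\langle x^{2^{n-3}},y\rangle\cong Q_8$ inside $Q_{2^n}$: the subgroup $N\rtimes K$ inherits the invariable basis property, hence is a $\mathcal B_I$-group, and Proposition \ref{strutturabi} then forces $N$ to be irreducible already as a $K$-module; since every faithful irreducible $\mathbb{F}_pQ_8$-module has order exactly $p^2$ (exhibited by explicit matrices in the paper), this yields $|N|=p^2$ and embeds $Q_{2^n}$ in $\GL(2,p)$. You need this (or an equivalent) restriction argument before the determinant computation identifying $H$ with the Sylow $2$-subgroup of $\ssl(2,p)$; your subsequent eigenvalue test on $N\rtimes\langle x\rangle$ with $|x|=4$, ruling out $p\equiv 1 \bmod 4$ via non-homogeneity of the Fitting subgroup, is then sound and is essentially the paper's argument in a different guise.
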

\begin{proof}
	$G$ is in particular a $\mathcal B_I$-group, so it satisfies one of the two possibilities described in Proposition \ref{strutturabi}. If $G$ is a $\mathcal B$-group, then $G$ satisfies (1) or (2). Otherwise $G=N\rtimes H$ where $N$ is $H$-irreducible, $\dim_{\End_H(N)}N \neq 1$, $C_N(h)=1$ for every $h\in H\setminus \frat_I(H)$ and $(|H|,|N|)=1.$ Moreover $G$ is a soluble CP-group so, by 
	\cite[Theorem 1]{hig}, $G$ has order divisible by at most 2 primes. Since $(|H|,|N|)=1,$ we conclude that  $H$ has prime power order. Since every element of $G$ has prime power order, we also deduce that $H$ acts fixed-point freely on $N.$ By \cite[10.5.5]{rob}, $H$ is cyclic or generalised quaternion. However we may exclude the first case, since it implies  $\dim_{\End_H(N)}N = 1.$
	
	Let us first consider the case $H=Q_8$, the quaternion group. Assume $|N|$ is a power of $p,$ being $p$ an odd prime.  Let $F_p$ be the field with $p$ elements.
	We have, up to equivalence, a unique faithful irreducible $F_pQ_8$-representation, say $\phi_p,$ and this representation has degree 2. Indeed choose $a, b$ in $F_p$ such that $a^2+b^2=-1.$ Then 
	$\phi_p: Q_8\to \GL(2,F_p)$ is defined by setting
	$$\phi_p(i)=\begin{pmatrix}a&b\\b&-a\end{pmatrix},\quad
	\phi_p(j)=\begin{pmatrix}0&-1\\1&0\end{pmatrix},\quad \phi_p(k)=\begin{pmatrix}b&-a\\-a&-b\end{pmatrix}.$$
	Since $$\phi_p(-1)=\begin{pmatrix}-1&0\\0&-1\end{pmatrix},$$
	$Q_8$ acts fixed-point-freely on $N=F_p^2$ and $G=N\rtimes Q_8$ is a $\mathcal B_I$-group. Notice that $\phi_p(i), \phi_p(j)$ and $\phi_p(k)$ have minimal polynomial $x^2+1.$

If $p\equiv 1 \mod 4$, then there is $c\in F_p$ such that $c^2=-1$, hence we may choose $(a,b)=(c,0)$ and $\phi_p(i)$ has eigenvalues $c$ and $-c.$  In this case consider $X=N \rtimes \langle i \rangle$, where $N=\langle w_1, w_2\rangle$ with $w_1^i=cw_1$ and $w_2^i=-cw_2$. We have $X=\langle w_1+w_2,i \rangle_I$, so $2=d_I(X)<m_I(X)=3$ and $G$ does not satisfy the invariable basis property. It follows that $p\equiv 3 \mod 4$.
	
	Consider now the general case
	$G=V\rtimes Q_{2^n},$ where $V$ is an elementary abelian $p$-group and $Q_{2^n}$ is the generalised quaternion group
	$$Q_{2^n}=\langle x,y \mid x^{2^{n-1}}=1, y^2= x^{2^{n-2}},
	y^{-1}xy=x^{-1}\rangle.$$ Suppose that this  group has the invariable basis property. In particular $K=\langle x^{2^{n-3}},y \rangle \cong Q_8$ is a subgroup of $Q_{2^n}$ and 
	$V\rtimes K$ is a subgroup of $G$. Since $G$ has the invariable basis property, $V\rtimes K$ is a $\mathcal B_I$-group, hence by Proposition \ref{strutturabi} $V$ is a faithful irreducible $K$-module. It follows that $|V|\leq p^2$ and $Q_{2^n}$ can be identified with a subgroup of $\GL(2,p).$ Since $V\rtimes K$ has the invariable basis property, we conclude again $p\equiv 3 \! \mod 4.$ Moreover, $y$ is an element of order 4 of $\GL(2,p)$, hence its characteristic polynomial is $t^2+1$ and consequently $\det y=1.$ Let $\alpha$ and $\beta$ be the eigenvalues of $x$ (in the algebraic closure of $F_p$). Since $x$ and $x^{-1}$ are similar matrices we have $\{\alpha, \beta\}=\{\alpha^{-1}, \beta^{-1}\}$, from which $\beta=\alpha^{-1}$ and $\det x=1.$ Hence $Q_{2^n}\leq \ssl(2,p).$ We deduce from \cite[Chap. 2, Theorem 8.3 (ii)]{gor} that $Q_{2^n}$ coincides with a Sylow $p$-subgroup of $\ssl(2,p).$

	Conversely, it is not difficult to see that if $G$ satisfies (1), (2) or (3), then $G$ has the invariable basis property.
	\end{proof}

\begin{cor}\label{invbasol}Let $G$ be a finite soluble group with the invariable basis property. Then $G=P\rtimes Q,$ where $P$ and $Q$ are Sylow subgroups of $G$ and the action of $Q$ on $P$ is fixed-point-free. In particular $Q$ is cyclic or generalised quaternion.
\end{cor}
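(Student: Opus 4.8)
The plan is to reduce to the Frattini-free situation and feed it into Proposition \ref{partial}. Since $G$ is soluble, $\frat(G)$ is a soluble (indeed nilpotent) normal subgroup, so by Lemma \ref{ere} the quotient $\overline G := G/\frat(G)$ again enjoys the invariable basis property, and it is Frattini-free. Hence $\overline G$ is of one of the three types listed in Proposition \ref{partial}. If $\overline G$ is an elementary abelian $p$-group (type (1)), then $G$ is a $p$-group, so $G=P$ and $Q=1$ and nothing more is needed. In the remaining cases we may write $\overline G=\overline P\rtimes\overline Q$, where $\overline P$ is the normal Sylow $p$-subgroup of $\overline G$ and $\overline Q$ is a cyclic or generalised quaternion Sylow $q$-complement, with $p\neq q$.

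Next I would record two global facts about $G$. Because $G$ has the invariable basis property, every cyclic subgroup of $G$ has prime-power order; thus $G$ is a soluble CP-group and, by \cite[Theorem 1]{hig}, at most two primes divide $|G|$. Since $\overline G$ already involves both $p$ and $q$, we get $|G|=p^aq^b$. The heart of the matter is then to promote the normal-Sylow structure of $\overline G$ to $G$ itself, that is, to show $\oo_q(G)=1$ (equivalently that the Sylow $p$-subgroup is normal). Here I would combine two observations. First, being a CP-group, $G$ cannot have both $\oo_p(G)\neq 1$ and $\oo_q(G)\neq 1$: a nontrivial element of $\oo_p(G)$ commutes with a nontrivial element of $\oo_q(G)$ (these being normal subgroups of coprime order), producing an element of order divisible by $pq$. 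Second, the standard identity $\fit(\overline G)=\fit(G)/\frat(G)$, together with $\fit(\overline G)=\overline P$ (note $\oo_q(\overline G)=1$ because $\overline Q$ acts faithfully on $\overline P$), shows that $\fit(G)/\frat(G)$ is a nontrivial $p$-group; hence $\oo_p(G)\neq 1$ while $\oo_q(G)\leq\frat(G)$. Together these force $\oo_q(G)=1$.

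Finally I would assemble the conclusion. With $\oo_q(G)=1$ we have $\frat(G)\leq\fit(G)=\oo_p(G)$, so $\frat(G)$ is a $p$-group and the full preimage of $\overline P$ equals $\fit(G)=\oo_p(G)$; since the corresponding quotient is $\overline Q$, a $q$-group, this preimage is exactly a Sylow $p$-subgroup $P$, which is therefore normal in $G$. Schur--Zassenhaus then yields a complement $Q$, necessarily a Sylow $q$-subgroup, so $G=P\rtimes Q$. The action is fixed-point-free: if some $1\neq x\in Q$ centralised a nontrivial $u\in P$, then $ux$ would have order divisible by $pq$, against the CP property. Lastly, a $q$-group acting fixed-point-freely is cyclic or generalised quaternion by \cite[10.5.5]{rob}, which gives the final assertion. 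I expect the main obstacle to be precisely the normality of the Sylow $p$-subgroup: mere CP-ness does not suffice (for instance $\sym(4)$ is a soluble CP-group in which no Sylow subgroup is normal), and it is the invariable basis property, entering through Proposition \ref{partial} and the location of the Fitting subgroup, that rules out such two-step configurations.
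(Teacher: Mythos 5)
Your argument is correct, and it follows the same overall skeleton as the paper's proof: pass to $\overline G=G/\frat(G)$ via Lemma \ref{ere}, read off the structure from Proposition \ref{partial}, show that $\frat(G)$ is a $p$-group, and then lift the normal Sylow $p$-subgroup and apply Schur--Zassenhaus together with the CP property and \cite[10.5.5]{rob}. The one step where you genuinely diverge is the crucial middle one, ruling out a $q$-part in $\frat(G)$. The paper does this by hand: $\frat(G)$ is nilpotent with no element of order $pq$, so it is a $p$- or $q$-group; if it were a nontrivial $q$-group, the Frattini argument applied to the normal subgroup $FP$ gives $G=F N_G(P)=N_G(P)$ (using that $\frat(G)$ consists of non-generators), whence $P$ and $F$ are both normal, $[P,F]=1$, and one gets a forbidden element of order $pq$. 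You instead invoke Gasch\"utz's identity $\fit(G/\frat(G))=\fit(G)/\frat(G)$ to see that $\oo_p(G)\neq 1$ and $\oo_q(G)\leq\frat(G)$, and then use CP-ness on the commuting normal subgroups $\oo_p(G)$ and $\oo_q(G)$ to force $\oo_q(G)=1$. Both routes are short and valid; the paper's is marginally more self-contained (the Frattini argument plus the non-generator characterisation of $\frat(G)$ are elementary), while yours imports a standard but nontrivial structural fact about $\fit$ modulo $\frat$ and in exchange makes the location of the Fitting subgroup explicit, which is arguably more transparent. One small point worth making explicit in your write-up: the faithfulness of the action of $\overline Q$ on $\overline P$ in case (3) of Proposition \ref{partial} comes from the fixed-point-freeness established there (irreducibility alone would not give it), and this is what guarantees $\oo_q(\overline G)=1$.
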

\begin{proof}Let $F=\frat(G).$ By Proposition \ref{partial},
	$G/F=X\rtimes Y$ where $X$ is a $p$-group, $Y$ is a $q$-group and $p$ and $q$ are distinct primes. Since $F$ is nilpotent and contains no element of order $p\cdot q,$ we deduce that $F$ is either a $p$-group or a $q$-group. Assume by contradiction that $F$ is a nontrivial $q$-group and let $P$ be a Sylow $p$-subgroup of $G$. Clearly $FP$ is a normal subgroup of $G$, so by the Frattini argument, $G=FPN_G(P)=FN_G(P)=N_G(P).$ But then both $P$ and $F$ are normal in $G$, so $[P,F]=1$ and $G$ contains an element of order $p\cdot q,$ a contradiction. Therefore $F$ is a $p$-group and the statement follows.
\end{proof}

While we did not study unsoluble $\mathcal B_I$-groups, the invariable basis property is restrictive enough to allow, with the help of the results in \cite{hh}, a characterisation of all groups having this property. In particular, there are only four unsoluble groups sharing it.

\begin{lemma}
\label{bricco}
	Let $G$ be a nonabelian finite simple group. Then $G$ has the invariable basis property if and only if it is isomorphic to one of the following:
	\begin{enumerate}
		\item $L_2(5),$ $L_2(8).$
		\item $Sz(8),$ $Sz(32).$
	\end{enumerate}
\end{lemma}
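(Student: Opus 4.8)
The plan is to determine exactly which nonabelian finite simple groups have the invariable basis property, and the key constraint to exploit is that any such group must be a CP-group (every element has prime-power order), since every cyclic subgroup of a group with the invariable basis property must have prime-power order by the remark preceding Lemma~\ref{somma}. The classification of nonabelian simple CP-groups is classical (due to Suzuki and others, as recorded in \cite{hh}): the complete list consists of $L_2(q)$ for suitable $q$ (namely $q \in \{4,5,7,8,9,17\}$ and a few more coming from the arithmetic conditions forcing $q\pm 1$ and $q$ to be prime powers), together with $L_3(4)$, the Suzuki groups $\Sz(q)$, and a handful of others. So the first step is to invoke \cite{hh} to reduce from all nonabelian simple groups to this finite-plus-parametrized list of CP-groups, after which the problem becomes a finite case analysis (plus handling the infinite Suzuki family).

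Next I would go through the CP-group candidates one at a time and test the defining equality $d_I(H)=m_I(H)$ for \emph{every} subgroup $H$, not merely for $G$ itself. The efficient way to exploit this is via Lemma~\ref{iota} and the conjugacy-class formalism of Section~\ref{alto}: for each candidate $G$ I would compute $m_I(G)$ (bounded above by $\iota(G)$, the largest independent subset of $\mathcal M(G)$) and compare it to $d_I(G)$, the minimum size of an invariable generating set. For the small groups $L_2(5)\cong \alt(5)$, $L_2(8)$, $\Sz(8)$, $\Sz(32)$ this is a direct, finite verification using the known maximal-subgroup structure and element orders; for instance, the argument given for $\alt(5)$ in the introduction ($d_I=m_I=2$ because any invariable generating set must contain an element of order $5$ and one of order $3$) is the prototype. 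I would then show that all the \emph{other} CP-groups on the list fail the property, typically by exhibiting a single subgroup $H$ (often a proper subgroup rather than $G$ itself) with $d_I(H)<m_I(H)$: since the property is inherited by subgroups, one bad subgroup disqualifies the whole group.

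For the infinite families $L_2(q)$ and $\Sz(q)$ the main work is to rule out all but the four listed groups by a uniform argument rather than case-by-case inspection. Here I would use the subgroup structure: $L_2(q)$ and $\Sz(q)$ contain dihedral/Frobenius subgroups and, crucially, soluble subgroups (Borel subgroups, torus normalizers, or the Frobenius kernel-complement structure) to which I can apply Corollary~\ref{invbasol}. That corollary forces any soluble subgroup with the invariable basis property to be of the form $P\rtimes Q$ with $Q$ acting fixed-point-freely and $Q$ cyclic or generalised quaternion; for most values of $q$ the relevant soluble subgroups violate this (e.g. they involve too many primes, since a CP-group argument combined with $q\pm1$ having several prime divisors gives subgroups whose order is divisible by three primes, contradicting Corollary~\ref{invbasol} together with the CP-property). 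This pins down $q$ to the very restrictive values surviving all the arithmetic constraints, leaving exactly $L_2(5)$, $L_2(8)$, $\Sz(8)$, $\Sz(32)$.

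The hard part will be the infinite Suzuki and $L_2$ families: one must argue that for all but finitely many $q$ some subgroup fails the property, and this requires combining the CP-condition (which already heavily restricts $q$) with the fixed-point-free action constraint from Corollary~\ref{invbasol} applied to soluble subgroups, all while keeping track of the precise maximal-subgroup structure. The small sporadic cases are in principle routine finite checks, but verifying $d_I=m_I$ across \emph{all} subgroups (not just the top group) for $\Sz(8)$ and $\Sz(32)$ is the most calculation-heavy portion, since these groups have intricate subgroup lattices; I expect that is where the bulk of the bookkeeping lives, even though each individual check reduces to the class-independence criterion of Lemma~\ref{iota}.
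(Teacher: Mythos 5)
Your overall strategy --- reduce to the simple CP-groups via \cite{hh}, eliminate the non-examples by exhibiting a subgroup that fails $d_I=m_I$, and verify the survivors by direct inspection of element orders and maximal subgroups --- is the paper's strategy. However, your picture of the CP-group classification is wrong in a way that substantially inflates the apparent difficulty: \cite[Proposition 3]{hh} gives a \emph{finite} list of nonabelian simple CP-groups, namely $L_2(q)$ for $q\in\{5,7,8,9,17\}$, $L_3(4)$, $Sz(8)$ and $Sz(32)$. There is no infinite or parametrized Suzuki family, and no further values of $q$ satisfying hidden arithmetic conditions, so the ``hard part'' you single out --- a uniform argument ruling out all but finitely many $q$ --- does not exist; once the correct list is in hand the whole proof is a check of eight groups. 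For the elimination step the paper is also more economical than your plan: $L_2(7)$, $L_2(9)$, $L_2(17)$ and $L_3(4)$ all contain a subgroup isomorphic to $\sym(4)$, and $2=d_I(\sym(4))<m_I(\sym(4))=3$ disposes of all four at once. Your alternative --- applying Corollary \ref{invbasol} to a soluble subgroup --- would also work here, since $\sym(4)$ is soluble but has no normal Sylow subgroup complemented by a fixed-point-free cyclic or generalised quaternion group, but it is an indirect route to the same conclusion. Your treatment of the four surviving groups coincides with the paper's: for each one, list the element orders and conjugacy classes of maximal subgroups, identify the minimal invariable generating sets (all of size $2$), and check that every proper subgroup is either a $p$-group or non-cyclic with $m_I=2$.
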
 

\begin{proof}
	$G$ must be a CP-groups, so by \cite[Proposition 3]{hh}
	$G$ is isomorphic to one of the following:
	\begin{enumerate}
		\item $L_2(q)$ for $q=5,7,8,9,17.$
		\item $L_3(4).$
		\item $Sz(8), Sz(32)$
	\end{enumerate}
	However, $L_2(7), L_2(9), L_2(17)$ and $L_3(4)$ have a subgroup isomorphic to $\perm(4)$: since $2=d_I(\perm(4))<m_I(\perm(4))=3,$ these groups do not have the invariable basis property. We analyse the remaining cases:
	\begin{itemize}
		\item $G=L_2(5) \cong \alt(5)$. We have already noticed that $d_I(G)=m_I(G)=2$. It can be easily seen that if $H$ is a proper subgroup of $G$ then either $H$ is a $p$-group or $H$ is non cyclic with $m_I(H)=2$. Hence $G$ has the invariable basis property.
		\item $G=L_2(8).$ An element of $G$ can have order 1, 2, 3, 7, 9 and there are three conjugacy classes of maximal subgroups: $F_{56},$ $D_{18},$ $D_{14}.$ The minimal invariable generating sets of $G$ are precisely the sets consisting of two elements, one of order 7, the other of order 3 or 9, so $d_I(G)=m_I(G)=2.$ It can be easily seen that if $H$ is a proper subgroup of $G$ then either $H$ is a $p$-group or $H$ is non cyclic with $m_I(G)=2.$
		\item $G=Sz(8).$ An element of $G$ can have order 1, 2, 4, 5, 7, 13 and there are four conjugacy classes of maximal subgroups: $2^{3+3}\!:\!7$ (the Frattini subgroup has order 8, and the factor group over the Frattini subgroup has a unique minimal normal subgroup, of order 8), $13\!:\!4,$ $5\!:\!4,$ $D_{14}.$ The minimal invariable generating sets of $G$ are precisely the sets consisting of two elements $x, y$ such that $\{|x|,|y|\}=
		\{4,7\}, \{5,7\}, \{5,13\}$ or $\{7,13\}$. Again it can be easily seen that if $H$ is a proper subgroup of $G$ then either $H$ is a $p$-group or $H$ is non cyclic with $m_I(G)=2.$
		\item $G=Sz(32).$ An element of $G$ can have order 1, 2, 4, 5, 25, 31, 41 and there are four conjugacy classes of maximal subgroups: $2^{5+5}\!:\!31$ (the Frattini subgroup has order 32, and the factor group over the Frattini subgroup has a unique minimal normal subgroup, of order 32), $41\!:\!4,$ $25\!:\!4$ (the Frattini subgroup has order 5), $D_{62}.$ The minimal invariable generating sets of $G$ are precisely the sets consisting of two elements $x, y$ such that $\{|x|,|y|\}=
		\{5,31\}, \{25,31\}, \{25,41\}$ or $\{31,45\}$. Again it can be easily seen that if $H$ is a proper subgroup of $G$ then either $H$ is a $p$-group or $H$ is non cyclic with $m_I(G)=2.$\qedhere
	\end{itemize}
\end{proof}

	\begin{cor}
		Let $G$ be a finite nonsoluble group. Then $G$ has the invariable basis property if and only if $G \in \{L_2(5), L_2(8), Sz(8), Sz(32)\}.$
	\end{cor}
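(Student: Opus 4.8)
The plan is to combine three things already in place: the invariable basis property passes to every subgroup (by definition) and, by Lemma \ref{ere}, to every quotient by a soluble normal subgroup; it is classified for simple groups by Lemma \ref{bricco}; and it forces $G$ to be a CP-group. The last point is immediate from the definition applied to a cyclic subgroup $\langle g\rangle$: there $1=d_I(\langle g\rangle)=m_I(\langle g\rangle)=m(\langle g\rangle)$, and $m(\langle g\rangle)$ equals the number of prime divisors of $|g|$, so every element of $G$ has prime-power order. Thus $G$ is a nonsoluble CP-group and we may bring in the structure of such groups from \cite{hh}.

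First I would pin down the top of the group. Let $R$ be the soluble radical of $G$; by Lemma \ref{ere} the quotient $\overline G=G/R$ again has the invariable basis property, is nonsoluble, and has trivial soluble radical, so $\soc(\overline G)=S_1\times\cdots\times S_k$ with the $S_i$ nonabelian simple CP-groups. If $k\geq 2$ then $S_1\times S_2$ contains an element of order $q_1q_2$, where $q_i$ is a prime dividing $|S_i|$ and $q_1\neq q_2$ (possible since each nonabelian simple group has order divisible by at least three primes); this is not a prime power, contradicting CP. Hence $k=1$ and $\overline G$ is almost simple with socle $S$, and since $S$ is a subgroup it inherits the property, so by Lemma \ref{bricco} $S\in\{L_2(5),L_2(8),\Sz(8),\Sz(32)\}$. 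Finally, if $\overline G\neq S$ then $\overline G$ contains $S.\langle\phi\rangle$ for an outer automorphism $\phi$ of prime order, and the fixed subgroup $C_S(\phi)$ (namely $L_2(2)\cong S_3$ in the first two cases, $\Sz(2)\cong 5{:}4$ in the last two) commutes with $\phi$ and supplies an element whose product with $\phi$ has non-prime-power order. So CP forces $\overline G=S$.

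It remains to prove $R=1$. Suppose not and let $V$ be a minimal normal subgroup of $G$ contained in $R$, an elementary abelian $p$-group viewed as an irreducible $G$-module. If $S=G/R$ centralizes $V$, then a preimage of a nontrivial element of $S$ of prime order $\ell\neq p$ commutes with a nontrivial element of $V$ and yields an element of order $p\ell$, contradicting CP; and if $p\nmid|S|$, coprimality together with the same prohibition forces $S$ to act fixed-point-freely on $V$, i.e. to be a Frobenius complement, which is impossible for a nonabelian simple group. Hence $p\mid|S|$ and $S$ acts nontrivially (so faithfully) on $V$. Now choose a $p$-element $s\in S$ with $C_V(s)\neq 1$, which exists because in the defining characteristic a nontrivial $p$-element acts unipotently and fixes a nonzero vector, and select a soluble subgroup $B\leq G$ containing a lift of $s$ together with an element of order prime to $p$, so arranged that $s\notin\frat_I(B)$ and that $V$ restricts to a faithful irreducible $B$-module with $C_V(s)\neq 1$. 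Then $V\rtimes B\leq G$ fails condition (2) of Proposition \ref{strutturabi}, hence is not a $\mathcal B_I$-group, contradicting the invariable basis property of $G$. This is precisely the phenomenon in $\asl(2,4)=\mathbb{F}_2^4\rtimes L_2(5)$, where $B=D_{10}$ and $s$ an involution do the job: $\mathbb{F}_2^4$ is faithful and irreducible over $D_{10}$, $\frat_I(D_{10})=1$, and the involution fixes a nonzero vector because $p=2$.

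The main obstacle is this last step: producing, uniformly across the four possibilities for $S$ and across all admissible modules $V$, the witnessing soluble subgroup $B$ carrying a non-Frattini $p$-element with a nonzero fixed vector. Making it rigorous requires knowing the defining-characteristic irreducible modules of $L_2(5),L_2(8),\Sz(8),\Sz(32)$ on which every $p'$-element acts fixed-point-freely (the constraint CP imposes), and verifying in each case that such a $B$ is available; once it is, Proposition \ref{strutturabi} closes the argument. A cleaner alternative, if \cite{hh} lists the nonsoluble CP-groups explicitly, is to replace this uniform argument by a finite inspection: each group on the list other than the four simple groups either acquires a non-prime-power element in an overgroup or contains a soluble section that is not a $\mathcal B_I$-group by Proposition \ref{strutturabi}.
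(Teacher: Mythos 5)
There is a genuine gap, and you have named it yourself: the proof that the soluble radical is trivial is not completed. Your reduction of the top of the group (passing to $G/R$, ruling out a product of several simple factors via CP, identifying the socle via Lemma \ref{bricco}, and excluding outer automorphisms by exhibiting non-prime-power elements) is sound and roughly parallel to the paper's appeal to \cite{hh}. But for $R\neq 1$ you propose to find, for each of the four simple groups and each admissible irreducible module $V$ in defining characteristic, a soluble subgroup $B$ with a non-Frattini $p$-element fixing a nonzero vector, and you concede that carrying this out uniformly requires a case-by-case knowledge of the modules that you do not supply. As written, the argument does not close. (There are also some looseness issues in that step: you treat $G$ as if it were $V\rtimes S$, whereas a priori $R$ may have several layers and the extension need not split, so statements like ``$C_V(s)$ for $s\in S$'' and ``lift $s$ to an element of the same order'' need justification.)

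The paper avoids all of this with a much shorter device that is already available to you, namely Corollary \ref{invbasol}. From \cite[Proposition 2]{hh} the obstruction $N$ below the simple quotient is a $2$-group. Each of the four simple groups $S$ contains a dihedral subgroup $D_{2p}$ for a suitable odd prime $p$ ($p=5,7,7,31$ respectively), so $G$ has a soluble subgroup $H$ with $N\leq H$ and $H/N\cong D_{2p}$. Since $H$ inherits the invariable basis property, Corollary \ref{invbasol} forces $H=P\rtimes Q$ with $P$ a normal Sylow $p$-subgroup and $Q$ a Sylow $2$-subgroup acting fixed-point-freely; then $N$ and $P$ are normal in $H$ with trivial intersection, so $N\leq C_G(P)$, and any nontrivial element of $N$ would combine with an element of $P$ to give an element of order $2p$, contradicting CP. This replaces your entire module-theoretic analysis and is the missing ingredient; I would rework the final step along these lines rather than attempting the representation-by-representation inspection.
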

	\begin{proof}
		We have to prove only the direct implication. $G$ is a CP-group so by \cite[Proposition 2]{hh}, there are normal subgroups $1\leq N\leq M\leq G$ of $G$ such that  $G/M$ is soluble, $M/N = S$ is a finite nonabelian simple group and $N$ is a 2-group. By Lemmas \ref{ere} and \ref{bricco}, $M/N \in \{L_2(5), L_2(8), Sz(8), Sz(32)\}$; we want to show $M=G$ and $N=1$. 

It follows from Propositions 4 and 5 in \cite{hh} that $M=G.$
		%Assume, by contradiction, $N\neq 1.$ Let $L$ be a normal subgroup of $G$ such that $N/L$ is a chief factor of $G.$ Let $S=G/N$ and $V=N/L$ (so in particular) $V$ is an irreducible 2-module.
		Notice that $S$ contains a subgroup isomorphic to the dihedral group of order $2\cdot p$, with $p=5$ if $S=L_2(5),$ $p=7$ if $S\in \{L_2(8), Sz(8)\},$ $p=31$ if $S=Sz(32).$ So there exists a subgroup $H$ of $G$ containing $N$ and with the property that $H/N\cong D_{2p}.$ Since $H$ satisfies the invariable basis property, we deduce from Corollary \ref{invbasol} that $H$ has a normal Sylow $p$-subgroup, say $P$, and consequently $N\leq C_G(P)$. Since $G$ cannot contain elements of order $2\cdot p$, we conclude $N=1.$
	\end{proof}

\end{document}